\theoremstyle{plain}
\newtheorem{thm}{Theorem}
\newtheorem{lemma}[thm]{Lemma}
\theoremstyle{definition}
\newtheorem{defn}{Definition}
\theoremstyle{remark}
\newtheorem{rem}{Remark}
\newtheorem{exmp}{Example}
\newcommand{\prn}[1]{\left(#1\right)}
\newcommand{\abs}[1]{\left|#1\right|}
\newcommand{\norm}[1]{\left\|#1\right\|}
\newcommand{\pd}[2]{\frac{\partial#1}{\partial#2}}
\newcommand{\ud}[1]{\, \mathrm{d}#1}
\begin{document}
\parskip.9ex

%=============================================================================================
\title[Gradient-augmented level set method with optimally local scheme]
{A gradient-augmented level set method with an optimally local, coherent advection scheme}
\author{Jean-Christophe Nave}
\address[Jean-Christophe Nave]
{Department of Mathematics \\ Massachusetts Institute of Technology \\
77 Massachusetts Avenue \\ Cambridge, MA 02139}
\email{jcnave@mit.edu}
\urladdr{http://www.mit.edu/\~{}jcnave}
\author{Rodolfo Ruben Rosales}
\address[Rodolfo Ruben Rosales]
{Department of Mathematics \\ Massachusetts Institute of Technology \\
77 Massachusetts Avenue \\ Cambridge, MA 02139}
\email{rrr@math.mit.edu}
\author{Benjamin Seibold}
\address[Benjamin Seibold]
{Department of Mathematics \\ Temple University \\
1801 North Broad Street \\ Philadelphia, PA 19122}
\email{seibold@temple.edu}
\urladdr{http://www.math.temple.edu/\~{}seibold/}
\subjclass[2000]{65M25; 35L65}
\keywords{level set method, subgrid resolution, CIR-method, cubic, curvature}
%=============================================================================================
\begin{abstract}
The level set approach represents surfaces implicitly, and advects them by evolving a
level set function, which is numerically defined on an Eulerian grid. Here we present
an approach that augments the level set function values by gradient information, and
evolves both quantities in a fully coupled fashion. This maintains the coherence between
function values and derivatives, while exploiting the extra information carried by the
derivatives. The method is of comparable quality to WENO schemes, but with optimally
local stencils (performing updates in time by using information from only a single
adjacent grid cell). In addition, structures smaller than the grid size can be located
and tracked, and the extra derivative information can be employed to obtain simple and
accurate approximations to the curvature. We analyze the accuracy and the stability
of the new scheme, and perform benchmark tests.
\end{abstract}
%=============================================================================================

\maketitle

%=============================================================================================
\section{Introduction}
%=============================================================================================
Level set methods represent a surface as the zero contour of a level set
function \cite{OsherSethian1988}, which can be numerically defined on a regular Eulerian
grid. The advection of the surface translates then into an appropriate
advection of the level set function. Derivative quantities, such as normal vectors and
curvature, can be computed from the level set function without
explicitly representing the surface. Commonly used level set approaches encounter
problems or inconveniences in the following aspects: the representation of small
structures, the approximation of derivative quantities (such as curvature), and the large
size of the stencils required by high order finite difference schemes.

We investigate the extent to which these problems can be remedied, if the level set
function is augmented by gradient information. In this case, the surface can be
represented by an appropriate interpolation that incorporates the additional information.
In this paper, we consider a bi-/tri-cubic Hermite interpolation to define the surface
in each cell. This yields a certain level of ``subgrid'' resolution, i.e.~structures
smaller than the grid resolution can be represented. In addition, the Hermite
bi-/tri-cubic interpolant provides a simple and accurate approximation to the normals
and the curvature anywhere in the computational domain.
In the context of level set methods, the use of a bicubic interpolation to construct
second order approximations to interfaces was proposed by Chopp \cite{Chopp2001}, but
without any tracking of gradient information.

The idea of using gradient information to improve the accuracy of numerical methods
for hyperbolic conservation laws was introduced by
van Leer \cite{VanLeer1973,VanLeer1974,VanLeer1977_1,VanLeer1977_2,VanLeer1979}.
In particular, his MUSCL (``Monotonic Upstream-centered Scheme for Conservation Laws'')
scheme and the PPM (``Piecewise Parabolic Method'')
by Colella and Woodward \cite{ColellaWoodward1984} use gradient information.
While in those methods, gradient values are reconstructed from function values,
the CIP method of Takewaki, Nishiguchi, and Yabe
\cite{TakewakiNishiguchiYabe1985,TakewakiYabe1987}
stores gradients as an independent quantity to solve hyperbolic conservation laws.
In the context of level set methods, Raessi, Mostaghimi, and Bussmann present an
approach that advects normal vectors independent of, but in an analogous fashion to,
the level set function values \cite{RaessiMostaghimiBussmann2007}.

In this paper, we present an approach that advects function values and gradients
as independent quantities, but in a fully coupled fashion. The approach is based on a
generalization of the CIR method \cite{CourantIsaacsonRees1952}, and uses the Hermite
cubic interpolant mentioned above in a natural way. Characteristic curves are
tracked backwards, and function values and gradients are obtained from the
interpolation. The resulting advection scheme is globally third order accurate, with
stencils that can be chosen no wider than a single cell.

In Sect.~\ref{sec:overview}, we provide a brief overview of the classical level set
method, introduce the idea of gradient-augmented approaches, and present a Hermite
cubic interpolation that will be the basis for the new method.
Three fundamental problems with classical level set methods are outlined in
Sect.~\ref{sec:benefits_gradient}, with focus on how the incorporation of gradients
is beneficial. The precise numerical scheme is given in
Sect.~\ref{sec:scheme_CIR}. Its accuracy and stability are analyzed theoretically
in Sect.~\ref{sec:numerical_analysis}, for a simple case.
In Sect.~\ref{sec:numerical_results}, we numerically investigate the accuracy of
the new approach, and test its performance on various benchmark tests.
Finally, in Sect.~\ref{sec:conclusions_outlook}, we outline various questions to be
investigated in future work.

%=============================================================================================
\section{Level Set Approaches Without and With Gradients}
\label{sec:overview}
%=============================================================================================

%---------------------------------------------------------------------------------------------
\subsection{Classical Level Set Method}
\label{subsec:classical_LS}
%---------------------------------------------------------------------------------------------
Many applications, such as the simulation of two-phase flows in $\mathbb{R}^p$ require
the representation and advection of a manifold of codimension 1, which in the following
we call a \emph{surface}. Level set methods \cite{OsherSethian1988} represent the surface
as the zero contour of a level set function $\phi:\mathbb{R}^p\rightarrow\mathbb{R}$.
In the domain enclosed by the surface, one has $\phi<0$, while outside $\phi>0$.
Geometric quantities, such as normal vectors and curvature, can be obtained from the
level set function:
$\vec{n}=\frac{\nabla\phi}{|\nabla\phi|}$, and $\kappa=\nabla\cdot\vec{n}$. In order to
move the surface with a velocity field $\vec{v} = (u,v,w)^T$, the level set function is
advected according to the partial differential equation
\begin{equation}
\phi_t+\vec{v}\cdot\nabla\phi = 0\;.
\label{eq:advection_phi}
\end{equation}
The level set function can be defined on a regular grid. High order
ENO~\cite{ShuOsher1988} or WENO~\cite{LiuOsherChan1994} schemes are commonly used to
approximate the advection equation \eqref{eq:advection_phi}.

Gradients and curvatures can be approximated by finite differences. For an accurate and
stable approximation, it is beneficial if $\phi$ is a signed distance function
\begin{equation}
|\nabla\phi(x)| = 1\;.
\label{eq:distance_function}
\end{equation}
Even if $\phi$ is a distance function initially, it typically ceases to be so due to
deformations induced by the velocity field. One remedy to this problem is to recover
\eqref{eq:distance_function} by solving the reinitialization equation
\begin{equation}
\phi_\tau=\mathrm{sign}(\phi_0)(1-|\nabla\phi|)\;,
\label{eq:reinitialization}
\end{equation}
where $\phi_0$ is the level set function at time $t$,
in pseudo-time $\tau$ \cite{SussmanSmerekaOsher1994}, or by solving the stationary
Eikonal equation \eqref{eq:distance_function} using a
fast marching method \cite{Sethian1996}. Another approach is
to solve \eqref{eq:advection_phi} with a modified velocity field, so that
\eqref{eq:distance_function} is preserved. This modified field is constructed by
extending the original velocity field away from the surface \cite{AdalsteinssonSethian1999}.

The actual surface is obtained from the level set function $\phi$ using contouring
algorithms \cite{LorensenCline1987}. These approaches are typically based on a
bi-/tri-linear interpolation inside a grid cell. The linear interpolant along cell edges
locates intersections of the surface with the grid edges. These intersection points are
subsequently connected to form surface patches in each cell. Ambiguous connection
cases are decided based on the full bi-/tri-linear interpolant inside the cell.
In many applications, it is sufficient to know the location of the surface on the grid
edges, for instance in the ghost fluid method \cite{FedkiwLiu2002}.

%---------------------------------------------------------------------------------------------
\subsection{Gradient-Augmented Level Set Method}
\label{subsec:gradient-augmented_LS}
%---------------------------------------------------------------------------------------------
We consider a generalized level set approach. The level set function $\phi$ is augmented
by gradient information $\vec{\psi}=\nabla\phi$, which is defined on the same grid as
$\phi$. The surface is defined using both independent quantities $\phi$ and $\vec{\psi}$.
This approach has various advantages in the context of level set methods.
Raessi et al.~show that under some circumstances, curvature can be
computed more accurately if gradients are accessible \cite{RaessiMostaghimiBussmann2007}.
In addition, as we show in the following, with gradients, subgrid structures can be
represented, and a high order advection scheme with optimally local stencils can be
formulated.

Evolving the implicitly defined surface with a velocity field $\vec{v}$ translates to
equation \eqref{eq:advection_phi} for $\phi$, and equation
\begin{equation}
\vec{\psi}_t+\nabla\prn{\vec{v}\cdot\vec{\psi}} = 0
\label{eq:advection_psi}
\end{equation}
for $\vec{\psi}$. Equation \eqref{eq:advection_psi} is obtained by applying the gradient
operator to \eqref{eq:advection_phi}. A straightforward approach is to
approximate each equation \eqref{eq:advection_phi} and \eqref{eq:advection_psi} using a
high order finite difference scheme. Raessi et al.~apply this
approach \cite{RaessiMostaghimiBussmann2007}, introducing only a weak coupling through an
extension velocity field \cite{AdalsteinssonSethian1999}. While such decoupled (or weakly
coupled) approaches can improve the classical level set approach, the full potential of a
coupled approach is not used. In contrast, here we present an approach that evolves the
level set function and its gradients in a coherent and fully coupled fashion. The
precise methodology is presented in Sect.~\ref{sec:scheme_CIR}.

%---------------------------------------------------------------------------------------------
\subsection{Cell-Based Hermite Interpolant}
\label{subsec:Hermite_interpolant}
%---------------------------------------------------------------------------------------------
In the gradient-augmented level set method, every grid point carries a function value
and a gradient vector. Thus, in $p$ space dimensions, every grid cell has $p+1$ pieces
of information on each of the $2^p$ cell corner points. As we show, this allows the
definition of a cell-based Hermite interpolant, i.e.~a function $\phi(\vec{x})$ that
is $C^\infty$ inside each cell, and that matches the function values and gradients at
all cell corner points. In this paper, we consider a $p$-cubic Hermite interpolant,
i.e.~a cubic in 1D, a bi-cubic in 2D, a tri-cubic in 3D, etc. It is a natural
generalization of the bi-/tri-linear interpolation used in classical level set approaches.
The interpolant is simple to construct, using a tensor product approach.

In the following, we use the classical multi-index notation. For
vectors $\vec{x}\in\mathbb{R}^p$ and $\vec{a}\in\prn{\mathbb{N}_0}^p$, one defines
$\abs{\vec{a}} = \sum_{i=1}^p a_i$, and
$\vec{x}^{\,\vec{a}} = \prod_{i=1}^p x_i^{a_i}$, and
$\partial^{\,\vec{a}} = \partial_1^{a_1}\dots\partial_p^{a_p}$,
where $\partial_i^{a_i} = \frac{\partial^{a_i}}{\partial^{a_i} x_i}$.
For convenience, we formulate some results for cubes, for which $h$ denotes the edge
length $h = \Delta x = \Delta y = \Delta z$. However, the results apply to rectangular
cells of arbitrary edge lengths as well. In this case, the estimates are valid with
respect to the scaling parameter $h = \max\{\Delta x,\Delta y,\Delta z\}$.

\begin{defn}
A \emph{$p$-cubic polynomial} is a polynomial $\mathcal{H} = \mathcal{H}(\vec{x})$
in $\mathbb{R}^p$, of degree $\leq 3$ in each of the variables. Hence, it has an
expression of the form
\begin{equation*}
\mathcal{H}(\vec{x}) = \sum_{\vec{\alpha}\in\{0,1,2,3\}^p}
c_{\vec{\alpha}} \; \vec{x}^{\,\vec{\alpha}}\;,
\end{equation*}
involving $4^p$ parameters $c_{\vec{\alpha}}$.
\end{defn}

\begin{defn}
A \emph{$p$-rectangle} (or simply ``cell'') is a set
$[a_1,b_1]\times\dots\times [a_p,b_p] \subset \mathbb{R}^p$, where $a_i<b_i$.
If $a_i=0,\,b_i=h\,\forall\, i$, we speak of a \emph{$p$-cube} (of size $h$), denoted
by $\mathcal{C}_h$. The $p$-cube $\mathcal{C}_1$ is called \emph{unit $p$-cube}.
\end{defn}

Let the $2^p$ vertices in a cell be indexed by a vector $\vec{v}\in\{0,1\}^p$,
i.e.~the vertex of index $\vec{v}$ is at position
$\vec{x}_{\vec{v}} = \prn{a_1+(b_1-a_1)v_1,\dots,a_p+(b_p-a_p)v_p}$.
In particular, for $\mathcal{C}_h$, we have $\vec{x}_{\vec{v}} = h\vec{v}$.

\begin{defn}
\label{def:p-cubic_data}
Let $\mathcal{C}$ be a $p$-rectangle, and let $\phi$ be a sufficiently smooth function
defined on an open set in $\mathbb{R}^p$ that includes $\mathcal{C}$. The
\emph{data for $\phi$ on $\mathcal{C}$} is the set of $4^p$ scalars given by
\begin{equation*}
\phi_{\vec{\alpha}}^{\vec{v}} = \partial^{\,\vec{\alpha}}\phi(\vec{x}_{\vec{v}})\;,
\end{equation*}
where both $\vec{v},\vec{\alpha}\in\{0,1\}^p$.
\end{defn}

\begin{lemma}
\label{lem:p-cubic_uniqueness}
Two $p$-cubic polynomials with the same data on some $p$-rectangle, must be equal.
\end{lemma}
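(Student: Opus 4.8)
The plan is to reduce the statement to a counting-plus-vanishing argument. Suppose $\mathcal{H}_1$ and $\mathcal{H}_2$ are two $p$-cubic polynomials with the same data on a $p$-rectangle $\mathcal{C}$, and set $\mathcal{H} = \mathcal{H}_1 - \mathcal{H}_2$. Then $\mathcal{H}$ is again a $p$-cubic polynomial (the class is closed under subtraction), it has $4^p$ coefficients, and its data on $\mathcal{C}$ all vanish: $\partial^{\,\vec{\alpha}}\mathcal{H}(\vec{x}_{\vec{v}}) = 0$ for all $\vec{v},\vec{\alpha}\in\{0,1\}^p$. It suffices to show $\mathcal{H}\equiv 0$. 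Since an affine change of variables in each coordinate sends $p$-cubic polynomials to $p$-cubic polynomials and maps the vertices of $\mathcal{C}$ to the vertices of the unit $p$-cube $\mathcal{C}_1$, I may assume without loss of generality that $\mathcal{C} = \mathcal{C}_1$, so the vertices are $\vec{v}\in\{0,1\}^p$ themselves.

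Next I would exploit the tensor-product structure by induction on $p$. For $p=1$: a cubic $\mathcal{H}(x)$ with $\mathcal{H}(0)=\mathcal{H}'(0)=\mathcal{H}(1)=\mathcal{H}'(1)=0$ has a double root at $0$ and a double root at $1$, hence is divisible by $x^2(x-1)^2$, which is degree $4$; a cubic divisible by a degree-$4$ polynomial must be zero. This is the classical Hermite interpolation fact in 1D. For the inductive step, write $\mathcal{H}(\vec{x}) = \sum_{k=0}^{3} x_p^{\,k}\, g_k(x_1,\dots,x_{p-1})$, where each $g_k$ is a $(p-1)$-cubic polynomial. The vanishing of $\partial^{\,\vec{\alpha}}\mathcal{H}$ at all vertices, for the two choices $\alpha_p\in\{0,1\}$ and all choices of the remaining multi-index components, says precisely that for every vertex $\vec{w}\in\{0,1\}^{p-1}$ of the $(p-1)$-cube, the four numbers $\mathcal{H}(\vec{w},0), \partial_p\mathcal{H}(\vec{w},0), \mathcal{H}(\vec{w},1), \partial_p\mathcal{H}(\vec{w},1)$ — together with all their $x_1,\dots,x_{p-1}$ derivatives of order at most one in each variable — vanish. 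Viewing $\mathcal{H}(\cdot\,,\,\cdot)$ as a cubic in $x_p$ with coefficients depending on the other variables and applying the 1D case coefficientwise, one concludes that the cubic in $x_p$ vanishes identically for each fixed $(x_1,\dots,x_{p-1})$ ranging over the relevant vertex data; more cleanly, fixing any multi-index $\vec{\beta}\in\{0,1\}^{p-1}$ and any vertex $\vec{w}$, the 1D Hermite argument applied to $x_p\mapsto\partial_1^{\beta_1}\cdots\partial_{p-1}^{\beta_{p-1}}\mathcal{H}(\vec{w},x_p)$ forces $\partial^{(\vec{\beta},k)}$-type relations that identify $g_k$ as a $(p-1)$-cubic with vanishing data on $\mathcal{C}_1^{\,p-1}$; the induction hypothesis then gives $g_k\equiv 0$ for each $k$, hence $\mathcal{H}\equiv 0$.

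An alternative, and perhaps cleaner, route avoids induction: observe that the $4^p$ linear functionals ``$\phi\mapsto\partial^{\,\vec{\alpha}}\phi(\vec{x}_{\vec{v}})$'' on the $4^p$-dimensional space of $p$-cubic polynomials are exactly the $p$-fold tensor products of the four 1D functionals $u\mapsto u(0), u'(0), u(1), u'(1)$ acting on univariate cubics. Since the $4\times 4$ 1D Hermite matrix is invertible (the $p=1$ case), the tensor-product (Kronecker) matrix of size $4^p\times 4^p$ is invertible as well, being a tensor product of invertible matrices. Invertibility of this interpolation matrix is equivalent both to existence and to uniqueness, so in particular the only $p$-cubic polynomial with zero data is the zero polynomial. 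The main obstacle is essentially bookkeeping: making the multi-index ordering of rows and columns line up so that the interpolation matrix is visibly the Kronecker product $M\otimes\cdots\otimes M$ of the 1D matrix $M$; once that identification is set up, invertibility is immediate from $\det(M\otimes\cdots\otimes M) = (\det M)^{p\,4^{p-1}}\neq 0$, or simply from the fact that a tensor product of bijective linear maps is bijective.
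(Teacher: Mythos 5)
Your proposal is correct. Your first route is essentially the paper's own argument: induction on $p$ using the 1D fact that a cubic with double zeros at $0$ and $1$ must vanish. The only organizational difference is the order of the reductions: the paper first applies the $(p-1)$-dimensional result to the four slice functions $\mathcal{H}(\cdot,0)$, $\partial_p\mathcal{H}(\cdot,0)$, $\mathcal{H}(\cdot,1)$, $\partial_p\mathcal{H}(\cdot,1)$ and then runs the 1D argument along each line in the $x_p$-direction, whereas you apply the 1D argument first (at each vertex $\vec{w}$ and each $\vec{\beta}$, to $x_p\mapsto\partial^{\,\vec{\beta}}\mathcal{H}(\vec{w},x_p)$) to conclude that the coefficient polynomials $g_k$ have vanishing data, and then invoke the inductive hypothesis; both orderings are valid, though your middle sentence (``one concludes that the cubic in $x_p$ vanishes identically for each fixed $(x_1,\dots,x_{p-1})$ ranging over the relevant vertex data'') is muddled and you should keep only the ``more cleanly'' version. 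Your second route --- identifying the $4^p$ data functionals as the $p$-fold tensor product of the four 1D Hermite functionals and invoking invertibility of Kronecker products of invertible matrices --- is genuinely different from the paper: it is non-inductive in spirit, proves existence and uniqueness simultaneously (the paper proves existence separately by exhibiting the tensor-product Lagrange basis, which is really the same invertibility fact in disguise), and generalizes immediately to other tensor-product interpolation schemes; its only cost is the bookkeeping you mention for lining up the row and column orderings.
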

\begin{proof}
WLOG assume that the $p$-rectangle is the unit $p$-cube $\mathcal{C}_1$.
Let $\mathcal{H}$ be the difference between the two polynomials---with zero data
on $\mathcal{C}_1$. We show now that $\mathcal{H} \equiv 0$.
\begin{enumerate}
\item
If $p = 1$, $\mathcal{H}$ is a cubic polynomial in one variable, with double zeros
at $x = 0,1$. Hence $\mathcal{H} \equiv 0$.
\item
If $p = 2$, the $p = 1$ result yields
$\mathcal{H}(x_1,0) \equiv \mathcal{H}_{x_2}(x_1,0) \equiv
\mathcal{H}(x_1,1) \equiv \mathcal{H}_{x_2}(x_1,1)\equiv 0 \;\forall\, 0 \leq x_1 \leq 1$.
Hence, the same argument as in $p = 1$ yields:
$\mathcal{H}(x_1,x_2) \equiv 0 \;\forall\, 0 \leq x_2 \leq 1$.
\item
If $p = 3$, the $p = 2$ result yields
$\mathcal{H}(x_1,x_2,0) \equiv \mathcal{H}_{x_3}(x_1,x_2,0) \equiv
\mathcal{H}(x_1,x_2,1) \equiv \mathcal{H}_{x_3}(x_1,x_2,1)\equiv 0
\;\forall\, 0 \leq x_1,x_2 \leq 1$.
As before, it follows that from the $p = 1$ result that
$\mathcal{H}(x_1,x_2,x_3) \equiv 0 \;\forall\, 0 \leq x_3 \leq 1$.
\end{enumerate}
From the above, it should now be obvious how to complete the proof
using induction on $p$.
\end{proof}

\begin{thm}
For any arbitrary data set on some $p$-rectangle, there exists exactly one
$p$-cubic polynomial which corresponds to the data.
\end{thm}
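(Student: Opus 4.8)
The plan is to observe that uniqueness is already supplied by Lemma~\ref{lem:p-cubic_uniqueness}, so that only \emph{existence} remains, and to deduce it by a short dimension count. First I would reduce to the unit $p$-cube: the diagonal affine map $\vec{x}\mapsto\prn{a_1+(b_1-a_1)x_1,\dots,a_p+(b_p-a_p)x_p}$ sends $\mathcal{C}_1$ onto the given $p$-rectangle $\mathcal{C}$, takes $p$-cubic polynomials to $p$-cubic polynomials bijectively, and---since each $b_i-a_i\neq 0$---induces a bijection between data sets on $\mathcal{C}_1$ and data sets on $\mathcal{C}$. Hence it suffices to treat $\mathcal{C}=\mathcal{C}_1$.

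On $\mathcal{C}_1$, consider the linear map $L$ that sends a $p$-cubic polynomial $\mathcal{H}$ to its data $\bigl(\mathcal{H}_{\vec{\alpha}}^{\vec{v}}\bigr)_{\vec{v},\vec{\alpha}\in\{0,1\}^p}$. Its domain, the space of $p$-cubic polynomials, has dimension exactly $4^p$: by definition its elements are the linear combinations of the $4^p$ monomials $\vec{x}^{\,\vec{\alpha}}$ with $\vec{\alpha}\in\{0,1,2,3\}^p$, and distinct monomials are linearly independent. Its codomain, the space of data sets, is $\mathbb{R}^{4^p}$, of the same dimension. Lemma~\ref{lem:p-cubic_uniqueness} says precisely that $\ker L=\{0\}$, i.e.\ $L$ is injective; an injective linear map between finite-dimensional spaces of equal dimension is an isomorphism, so $L$ is surjective as well. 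Surjectivity of $L$ is exactly the claimed existence, and injectivity re-gives uniqueness, so ``exactly one'' follows.

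Alternatively, and more usefully for the construction of the scheme, I would exhibit the interpolant explicitly by a tensor product. In one variable there are unique cubics $g_{0,0},g_{0,1},g_{1,0},g_{1,1}$ characterized by having, among the four quantities (value at $0$, derivative at $0$, value at $1$, derivative at $1$), the value $1$ for one of them and $0$ for the remaining three; the pair $(v,\alpha)\in\{0,1\}^2$ (endpoint, order of derivative) thus indexes these four basis cubics. Then for a prescribed data set $\phi_{\vec{\alpha}}^{\vec{v}}$ I would set
\begin{equation*}
\mathcal{H}(\vec{x}) = \sum_{\vec{v}\in\{0,1\}^p}\ \sum_{\vec{\alpha}\in\{0,1\}^p}
\phi_{\vec{\alpha}}^{\vec{v}}\ \prod_{i=1}^p g_{v_i,\alpha_i}(x_i)\;,
\end{equation*}
which is $p$-cubic by construction since each factor is a cubic in $x_i$ alone; applying $\partial^{\,\vec{\beta}}$ and evaluating at a vertex $\vec{x}_{\vec{w}}$ factors over the coordinates, and by the defining properties of the $g$'s each factor is $\delta$-like, so $\partial^{\,\vec{\beta}}\mathcal{H}(\vec{x}_{\vec{w}})=\phi_{\vec{\beta}}^{\vec{w}}$.

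There is no serious obstacle here---the statement is ``soft.'' In the linear-algebra route, the only points requiring (routine) verification beyond Lemma~\ref{lem:p-cubic_uniqueness} are the two dimension counts, both immediate from the definition of a $p$-cubic polynomial. In the explicit route, the only care needed is the bookkeeping matching the multi-index $\vec{\alpha}\in\{0,1\}^p$ and the vertex label $\vec{v}\in\{0,1\}^p$ against the four one-dimensional Hermite basis cubics coordinate by coordinate; once the 1D facts are in hand, the $p$-dimensional identity drops out by taking products. I would present the dimension-count argument as the proof, and remark that the tensor product furnishes an explicit formula.
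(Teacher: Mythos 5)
Your proposal is correct. Your primary argument (the dimension count) is a genuinely different route from the paper's: the paper proves existence by explicitly constructing the interpolant as a linear combination $\sum \phi_{\vec{\alpha}}^{\vec{v}} W_{\vec{\alpha}}^{\vec{v}}$ of tensor-product Lagrange--Hermite basis polynomials $W_{\vec{\alpha}}^{\vec{v}}(\vec{x})=\prod_i w_{\alpha_i}^{v_i}(x_i)$, with the 1D factors written out concretely in terms of $f(x)=1-3x^2+2x^3$ and $g(x)=x(1-x)^2$ --- which is precisely your ``alternative'' tensor-product route, so you have in effect reproduced the paper's proof as a side remark. The linear-algebra argument is shorter and softer: it needs only that the evaluation map $L$ is linear between spaces of the same dimension $4^p$ together with the injectivity already supplied by Lemma~\ref{lem:p-cubic_uniqueness}, and your reduction to the unit cube via a diagonal affine change of variables is fine (it preserves the per-variable degree). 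What it does not buy you is the explicit formula \eqref{eq:p-cubic_constructed}, which the paper is not free to omit: that formula is reused directly in \eqref{eq:p-cubic_interpolant}, in Lemma~\ref{lem:p-cubic_accuracy_data}, and in the numerical scheme itself, so for the paper the construction is the point of the proof rather than an optional addendum. Your plan to present the dimension count as the proof and the tensor product as a remark is logically sound; just be aware that the concrete basis functions (and their scaling under $\vec{x}\mapsto\vec{x}/h$) are load-bearing for everything downstream, so the ``remark'' would need to carry the same level of detail the paper gives.
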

\begin{proof}
Lemma~\ref{lem:p-cubic_uniqueness} shows that there exists at most one such polynomial.
Here we construct one.
WLOG assume that the $p$-rectangle is the unit $p$-cube $\mathcal{C}_1$.
For each vertex of $\mathcal{C}_1$, indexed by $\vec{v}$, and each derivative
$\vec{\alpha}\in\{0,1\}^p$, one can construct Lagrange basis polynomials
$W_{\vec{\alpha}}^{\vec{v}}$, i.e.~$p$-cubic polynomials that satisfy
\begin{equation*}
\partial^{\,\vec{\beta}}\phi(\vec{x}_{\vec{w}}) W_{\vec{\alpha}}^{\vec{v}}
= \delta_{\vec{\alpha}\vec{\beta}}\delta_{\vec{v}\vec{w}}
\quad\forall\, \vec{\beta},\vec{w}\in\{0,1\}^p\;.
\end{equation*}
Here $\delta_{\vec{v}\vec{w}} = \prod_{i=1}^p \delta_{v_i,w_i}$,
where $\delta$ is Kronecker's delta. Hence, each of the $2^p\cdot 2^p = 4^p$ basis
polynomials equals 1 on exactly one vertex and for exactly one type of derivative,
and equals 0 for any other vertex or derivative.

The basis polynomials can be constructed as tensor products of the form
\begin{equation*}
W_{\vec{\alpha}}^{\vec{v}} (\vec{x}) = \prod_{i=1}^p w_{\alpha_i}^{v_i}(x_i)\;,
\end{equation*}
where each of the $w_i$ is a $1$-cubic polynomial
\begin{equation}
w_{\alpha}^{v}(x) =
\begin{cases}
 f(x)   &\text{if~} v=0, \alpha=0 \\
 f(1-x) &\text{if~} v=1, \alpha=0 \\
 g(x)   &\text{if~} v=0, \alpha=1 \\
-g(1-x) &\text{if~} v=1, \alpha=1
\end{cases}
\label{eq:1-cubic_basis_functions}
\end{equation}
where $f(x) = 1-3x^2+2x^3$ and $g(x) = x(1-x)^2$.

A $p$-cubic that corresponds to the data, is then defined by a linear combination
of the basis functions
\begin{equation}
\mathcal{H}(\vec{x}) = \sum_{\vec{v},\vec{\alpha}\in\{0,1\}^p}
\phi_{\vec{\alpha}}^{\vec{v}}\, W_{\vec{\alpha}}^{\vec{v}}(\vec{x})\;.
\label{eq:p-cubic_constructed}
\end{equation}
\end{proof}

We now investigate how well the $p$-cubic \eqref{eq:p-cubic_constructed} approximates
a smooth function, when interpolating it on the vertices of a cell. We are interested
in its accuracy, as the cell size approaches zero. While the following analysis also
holds for $p$-rectangles, for simplicity (and WLOG), we provide the expressions
for $p$-cubes only.

\begin{defn}
\label{def:p-cubic_interpolant}
Consider a (sufficiently smooth) scalar function $\phi(\vec{x})$, defined in an open
neighborhood $\Omega$ of the origin in $\mathbb{R}^p$. Let $0 < h \ll 1$ be small
enough so that the $p$-cube $\mathcal{C}_h$ is included in $\Omega$.
The \emph{$p$-cubic Hermite interpolant to $\phi$ in $\mathcal{C}_h$}
is the $p$-cubic polynomial $\mathcal{H} = \mathcal{H}(\vec{x})$, such that $\phi$
and $\mathcal{H}$ have the same data on $\mathcal{C}_h$.
\end{defn}

Using the notation of Def.~\ref{def:p-cubic_data}, and equation
\eqref{eq:p-cubic_constructed}, it is easy to see that the $p$-cubic Hermite
interpolant in Def.~\ref{def:p-cubic_interpolant} can be written in the form
\begin{equation}
\mathcal{H}(\vec{x}) = \sum_{\vec{v},\vec{\alpha}\in\{0,1\}^p}
\phi_{\vec{\alpha}}^{\vec{v}}\, h^{\abs{\vec{\alpha}}}\,
W_{\vec{\alpha}}^{\vec{v}}\prn{\frac{\vec{x}}{h}}\;.
\label{eq:p-cubic_interpolant}
\end{equation}
This expression is straightforward to differentiate analytically:
derivatives of the $1$-cubic basis functions \eqref{eq:1-cubic_basis_functions},
and powers of $1/h$, appear.

\begin{exmp}[1D Hermite cubic interpolant]
On a 1D cell $[x_i,x_{i+1}]$ of size $h$, with grid values denoted by $\phi^i$,
and derivatives by $\phi_x^i$, the Hermite cubic interpolant is defined by
\begin{equation}
\mathcal{H}(x) = \phi^i f(\xi)+\phi^{i+1} f(1-\xi)
+h\prn{\phi_x^i g(\xi)-\phi_x^{i+1} g(1-\xi)}\;,
\label{eq:Hermite_cubic_1D}
\end{equation}
and its derivative is
\begin{equation}
\mathcal{H}'(x) = \tfrac{1}{h}\prn{\phi^i f'(\xi)-\phi^{i+1} f'(1-\xi)}
+\phi_x^i g'(\xi)+\phi_x^{i+1} g'(1-\xi)\;,
\label{eq:Hermite_cubic_1D_prime}
\end{equation}
where we use the relative coordinate $\xi = \frac{x-x_i}{h}$.
\end{exmp}

\begin{rem}
\label{rem:cubic_1d_minimize_l2}
The 1D Hermite cubic \eqref{eq:Hermite_cubic_1D} is the unique minimizer of the functional
$I(u) = \int_{x_i}^{x_{i+1}} u_{xx}^2 \ud{x}$ under the constraints that the function
values and derivatives are matched at $x_i$ and $x_{i+1}$. This is a standard problem in
calculus of variations. The cubic \eqref{eq:Hermite_cubic_1D} is a minimizer, since it
solves the corresponding Euler-Lagrange equation $u_{xxxx} = 0$. It is the unique
minimizer, since $I$ is convex, and the domain for the minimization problem is also convex.
Thus, the 1D Hermite cubic minimizes the $L^2$ norm of the second derivative.
\end{rem}

\begin{lemma}
\label{lem:p-cubic_accuracy_data}
Let the data determining the $p$-cubic Hermite interpolant $\mathcal{H}$
in Def.~\ref{def:p-cubic_interpolant} be known only up to some error.
Then equation \eqref{eq:p-cubic_interpolant} yields the interpolation error
\begin{equation*}
\delta\mathcal{H}(\vec{x}) = \sum_{\vec{v},\vec{\alpha}\in\{0,1\}^p}
W_{\vec{\alpha}}^{\vec{v}}\prn{\frac{\vec{x}}{h}}\,
h^{\abs{\vec{\alpha}}}\, \delta\phi_{\vec{\alpha}}^{\vec{v}}\;,
\end{equation*}
where we use the notation $\delta u$ to indicate the error in some quantity $u$.
In particular, if the data $\phi_{\vec{\alpha}}^{\vec{v}}$ are known with
$O\prn{h^{4-\abs{\vec{\alpha}}}}$ accuracy, then
$\delta\prn{\partial^{\,\vec{\alpha}}\mathcal{H}} = O\prn{h^{4-\abs{\vec{\alpha}}}}$.
\end{lemma}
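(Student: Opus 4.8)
The plan is to use the linearity of the interpolation map. By formula \eqref{eq:p-cubic_interpolant}, the assignment from the data $\{\phi_{\vec{\alpha}}^{\vec{v}}\}$ to the $p$-cubic Hermite interpolant $\mathcal{H}$ is linear; hence if $\tilde{\mathcal{H}}$ denotes the $p$-cubic built by \eqref{eq:p-cubic_interpolant} from the corrupted data $\phi_{\vec{\alpha}}^{\vec{v}}+\delta\phi_{\vec{\alpha}}^{\vec{v}}$, the difference $\delta\mathcal{H}=\tilde{\mathcal{H}}-\mathcal{H}$ is just \eqref{eq:p-cubic_interpolant} evaluated on the error data, which is precisely the displayed formula for $\delta\mathcal{H}$. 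This first half needs nothing more than reading off linearity.

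For the accuracy claim I would differentiate the error formula. Since the basis functions enter with the scaled argument $\vec{x}/h$, the chain rule gives $\partial^{\vec{\beta}}\bigl[W_{\vec{\alpha}}^{\vec{v}}(\vec{x}/h)\bigr]=h^{-\abs{\vec{\beta}}}\,(\partial^{\vec{\beta}}W_{\vec{\alpha}}^{\vec{v}})(\vec{x}/h)$, so
\begin{equation*}
\partial^{\vec{\beta}}\delta\mathcal{H}(\vec{x})=\sum_{\vec{v},\vec{\alpha}\in\{0,1\}^p}h^{\abs{\vec{\alpha}}-\abs{\vec{\beta}}}\,(\partial^{\vec{\beta}}W_{\vec{\alpha}}^{\vec{v}})\prn{\tfrac{\vec{x}}{h}}\,\delta\phi_{\vec{\alpha}}^{\vec{v}}\;.
\end{equation*}
Each $W_{\vec{\alpha}}^{\vec{v}}$ is a fixed polynomial, independent of $h$, so every $\partial^{\vec{\beta}}W_{\vec{\alpha}}^{\vec{v}}$ is bounded on the compact unit cube $\mathcal{C}_1$; and $\vec{x}\in\mathcal{C}_h$ forces $\vec{x}/h\in\mathcal{C}_1$, so $(\partial^{\vec{\beta}}W_{\vec{\alpha}}^{\vec{v}})(\vec{x}/h)=O(1)$ uniformly in $\vec{x}$, with constants independent of $h$. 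Substituting the hypothesis $\delta\phi_{\vec{\alpha}}^{\vec{v}}=O(h^{4-\abs{\vec{\alpha}}})$ makes each summand $O\bigl(h^{\abs{\vec{\alpha}}-\abs{\vec{\beta}}}\cdot h^{4-\abs{\vec{\alpha}}}\bigr)=O(h^{4-\abs{\vec{\beta}}})$; since the sum is finite ($4^p$ terms) we get $\partial^{\vec{\beta}}\delta\mathcal{H}=O(h^{4-\abs{\vec{\beta}}})$. Because $\delta$ commutes with the linear operator $\partial^{\vec{\beta}}$, this reads $\delta(\partial^{\vec{\beta}}\mathcal{H})=O(h^{4-\abs{\vec{\beta}}})$, which is the assertion after renaming $\vec{\beta}$ as $\vec{\alpha}$ (the $\vec{\alpha}$ in the conclusion being a generic derivative order, overloading the data index).

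I do not expect a genuine obstacle: this is a linearity-and-scaling argument. The two points worth a sentence of care are (i) that the $O(1)$ bounds on the basis-polynomial derivatives are uniform in $h$, which holds exactly because the $W_{\vec{\alpha}}^{\vec{v}}$ live on the fixed cube $\mathcal{C}_1$ and the relevant evaluation points $\vec{x}/h$ stay inside it, and (ii) that the bound is pointwise (indeed uniform) over the cell. One may additionally note that in the sum defining $\delta\mathcal{H}$ itself the factors $h^{\abs{\vec{\alpha}}}$ and $h^{4-\abs{\vec{\alpha}}}$ always combine to $h^4$, so $\delta\mathcal{H}=O(h^4)$ no matter which data components carry the largest error, which is the $\vec{\beta}=0$ case of the estimate.
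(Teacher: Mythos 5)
Your argument is correct and is exactly the reasoning the paper intends: the lemma is stated without a separate proof precisely because it is the linearity of \eqref{eq:p-cubic_interpolant} in the data combined with the $h$-scaling of the basis functions (the paper's remark that differentiating \eqref{eq:p-cubic_interpolant} produces ``derivatives of the $1$-cubic basis functions, and powers of $1/h$'' is your chain-rule step). You have simply written out the routine details the authors leave implicit, including the correct observation that the exponents always combine to $h^{4-\abs{\vec{\beta}}}$ uniformly over the cell.
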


\begin{thm}
\label{thm:p-cubic_accuracy}
Let $\phi$ and $\mathcal{H}$ be as in Def.~\ref{def:p-cubic_interpolant}.
Then, for any point in $\mathcal{C}_h$, we have
\begin{equation}
\partial^{\,\vec{\alpha}}\mathcal{H}
= \partial^{\,\vec{\alpha}}\phi+O\prn{h^{4-\abs{\vec{\alpha}}}}\;,
\label{eq:p-cubic_error}
\end{equation}
where the coefficients in the error terms $O(h^\mu)$ can be bounded by some constant
multiple of the norm $\|D^4 \phi\|_\infty$ in $\mathcal{C}_h$.
\end{thm}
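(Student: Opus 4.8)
The plan is to reduce the statement to Taylor's theorem, exploiting that the Hermite interpolation map $\phi\mapsto\mathcal{H}$ is linear and reproduces every $p$-cubic polynomial exactly. Fix the origin (a corner of $\mathcal{C}_h$) as expansion point and write $\phi = T + R$, where $T$ is the Taylor polynomial of $\phi$ at $\vec 0$ of total degree $3$, and $R = \phi - T$ is the remainder. Since $T$ has degree $\le 3$ in each variable it is a $p$-cubic polynomial, so by Lemma~\ref{lem:p-cubic_uniqueness} its own $p$-cubic Hermite interpolant on $\mathcal{C}_h$ is $T$ itself. Writing $\mathcal{I}$ for the interpolation operator and using linearity, $\mathcal{H} = \mathcal{I}\phi = \mathcal{I}T + \mathcal{I}R = T + \mathcal{I}R$, so that $\mathcal{H} - \phi = \mathcal{I}R - R$ and $\partial^{\,\vec\alpha}\mathcal{H} - \partial^{\,\vec\alpha}\phi = \partial^{\,\vec\alpha}(\mathcal{I}R) - \partial^{\,\vec\alpha}R$. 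It therefore suffices to bound each term on the right by $O(h^{4-\abs{\vec\alpha}})$, with constants controlled by $\|D^4\phi\|_\infty$ in $\mathcal{C}_h$.

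For the second term, observe that $D^4 R = D^4\phi$ and that for $\abs{\vec\alpha} = k$ the function $\partial^{\,\vec\alpha}R$ is exactly $\partial^{\,\vec\alpha}\phi$ minus its own Taylor polynomial of degree $3-k$ (the zero polynomial when $k\ge 4$); in particular all derivatives of $R$ up to order $3$ vanish at $\vec 0$. Taylor's theorem with integral remainder then gives, for every $\vec x\in\mathcal{C}_h$, $\abs{\partial^{\,\vec\alpha}R(\vec x)} \le C_p\,\|D^4\phi\|_\infty\,\abs{\vec x}^{\,\max(4-k,0)} = O(h^{4-k})$, since $\abs{\vec x}\le\sqrt{p}\,h$. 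Evaluating at the corners $\vec x_{\vec v} = h\vec v$ shows in particular that the data of $R$ on $\mathcal{C}_h$ satisfies $\abs{\partial^{\,\vec\beta}R(\vec x_{\vec v})} = O(h^{4-\abs{\vec\beta}})$ for all $\vec v,\vec\beta\in\{0,1\}^p$.

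For the first term, use the explicit representation \eqref{eq:p-cubic_interpolant} of $\mathcal{I}R$ and differentiate it term by term: each application of $\partial^{\,\vec\alpha}$ produces a factor $h^{-\abs{\vec\alpha}}$ and replaces $W_{\vec\beta}^{\vec v}$ by the fixed polynomial $\partial^{\,\vec\alpha}W_{\vec\beta}^{\vec v}$, whose supremum on the unit $p$-cube is an absolute constant (the $W$'s are tensor products of the fixed $1$-cubic functions built from $f$ and $g$ in \eqref{eq:1-cubic_basis_functions}). Combining this with $h^{\abs{\vec\beta}}\abs{\partial^{\,\vec\beta}R(\vec x_{\vec v})} = O(h^4)$ from the previous step, and noting the sum has only $4^p$ terms, yields $\abs{\partial^{\,\vec\alpha}(\mathcal{I}R)(\vec x)} = O(h^{4-\abs{\vec\alpha}})$ uniformly on $\mathcal{C}_h$, with constant again a multiple of $\|D^4\phi\|_\infty$. (This is exactly Lemma~\ref{lem:p-cubic_accuracy_data} applied with the data of $R$ in the role of the data errors $\delta\phi_{\vec\alpha}^{\vec v}$.) Adding the two estimates proves \eqref{eq:p-cubic_error}.

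The only step that needs genuine care is the uniform Taylor bound on $\partial^{\,\vec\alpha}R$ in terms of $\|D^4\phi\|_\infty$: one has to keep track that differentiating the remainder $k$ times leaves $4-k$ derivatives still to be controlled, and ensure the constants remain dimensionally consistent as $h\to 0$. Everything else is linearity together with the exactness of the Hermite interpolation on $p$-cubics. An essentially equivalent route would be a Bramble--Hilbert lemma argument on the reference unit $p$-cube followed by the standard scaling $\vec x\mapsto\vec x/h$, but the direct Taylor computation above keeps the dependence on $\|D^4\phi\|_\infty$ most transparent.
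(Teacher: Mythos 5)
Your proposal is correct and is essentially the paper's own argument: the paper also compares $\mathcal{H}$ to the degree-three Taylor polynomial $\mathcal{G}$ of $\phi$, uses that $\mathcal{G}$ is a $p$-cubic reproduced exactly by the interpolation, and invokes Lemma~\ref{lem:p-cubic_accuracy_data} to control the effect of the $O(h^{4-\abs{\vec\alpha}})$ difference between the data of $\phi$ and of $\mathcal{G}$. Your write-up merely makes explicit (via the operator identity $\mathcal{H}-\phi=\mathcal{I}R-R$ and the integral-remainder bounds on $\partial^{\,\vec\alpha}R$) the steps the paper leaves implicit.
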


\begin{proof}
Let $\mathcal{G} = \mathcal{G}(\vec{x})$ be the degree three polynomial obtained by
using a Taylor expansion for $\phi$, centered at some point in $\mathcal{C}_h$.
Then, by construction, $\mathcal{G}$ satisfies \eqref{eq:p-cubic_error} above.
In particular, the data for $\mathcal{G}$ on $\mathcal{C}_h$ is related to the data
for $\mathcal{H}$ (same as the data for $\phi$) in the manner specified
in Lemma~\ref{lem:p-cubic_accuracy_data}. Hence \eqref{eq:p-cubic_error} follows.
\end{proof}

Lemma~\ref{lem:p-cubic_accuracy_data} and Thm.~\ref{thm:p-cubic_accuracy} imply that
the $p$-cubic interpolant is fourth order accurate, if any required derivative
$\partial^{\,\vec{\alpha}}\phi$ is known with $O\prn{h^{4-\abs{\vec{\alpha}}}}$
accuracy on the cell vertices. Since in a gradient-augmented method, we only assume
that the function values ($\abs{\vec{\alpha}} = 0$) and the
gradients ($\abs{\vec{\alpha}} = 1$) are given, we need to construct any required cross
derivatives ($\abs{\vec{\alpha}} \ge 2$) with accuracy $O\prn{h^{4-\abs{\vec{\alpha}}}}$,
from the given data.
Interestingly, this means that one can set to zero all the derivatives of order higher
than 3, without affecting the interpolant's accuracy. Unfortunately, this convenience
does not come into play for dimensions $p\le 3$.

The cross derivatives required by \eqref{eq:p-cubic_interpolant} can be constructed
to the appropriate order from the function values and the derivatives at the grid
points. Two possible approaches are:
\begin{enumerate}[(A)]
\item\textbf{Central differencing.~}
Second order cross derivatives can be approximated by central differences of
neighboring points, such as (here written for $p=3$ on a cell of size
$\Delta x \times \Delta y \times \Delta z$, all proportional to $h$)
\begin{equation*}
\phi_{xy}^{i,j,k} = \frac{\phi_y^{i+1,j,k}
-\phi_y^{i-1,j,k}}{2\Delta x}+O(h^2)\;.
\end{equation*}
By construction, these approximations are second order accurate.
The third order cross derivative can be approximated by
\begin{equation*}
\phi_{xyz}^{i,j,k} = \frac{\phi_z^{i+1,j+1,k}
-\phi_z^{i-1,j+1,k}-\phi_z^{i+1,j-1,k}
+\phi_z^{i-1,j-1,k}}{4\Delta x\Delta y}+O(h^2)\;.
\end{equation*}
This approach defines one unique value for each required cross derivative
at each grid point. Hence, the thus defined $p$-cubic interpolant is $C^1$ across cell
edges. A technical disadvantage is that the optimal locality is to a certain extent lost:
The interpolation in a cell uses information from adjacent cells corner points.
\item\textbf{Cell-based approach.~}
In 2D, a second order accurate approximation to the second order cross derivatives
at the vertices of a cell can be obtained from the gradient values
at the vertices in the same cell, using finite differences
and interpolation/extrapolation:
\begin{enumerate}[(1)]
\item The central differences
$\frac{\phi_x^{i,j+1}-\phi_x^{i,j}}{\Delta y}$,
$\frac{\phi_x^{i+1,j+1}-\phi_x^{i+1,j}}{\Delta y}$,
$\frac{\phi_y^{i+1,j}-\phi_y^{i,j}}{\Delta x}$,
$\frac{\phi_y^{i+1,j+1}-\phi_y^{i,j+1}}{\Delta x}$
approximate $\phi_{xy}$ at the cell edge centers.
\item
Weighted averages of these values yield approximations to $\phi_{xy}$ at the cell
vertices. The weights follow from bilinear interpolation, and are
$\tfrac{3}{4}$ for the two nearby edge centers, and $-\tfrac{1}{4}$ for the two
opposing edge centers.
\end{enumerate}
To obtain the cross derivatives in 3D, the formulas above are applied in a facet of
the 3D cell.

A first order accurate approximation to the third cross derivative is given by
\begin{equation*}
\phi_{xyz}^{i,j,k} = \frac{\phi_z^{i+1,j+1,k}
-\phi_z^{i,j+1,k}-\phi_z^{i+1,j,k}+\phi_z^{i,j,k}}{\Delta x\Delta y}+O(h)\;.
\end{equation*}
This approach is purely cell-based, and the interpolation can be implemented as
a single black-box routine. A technical disadvantage of this approach is that one and
the same grid point is assigned different cross derivative values, depending on which
cell it is a vertex of. Since the approximations are second order accurate, the various
cross derivatives at a grid point differ by $O(h^2)$. Consequently, the resulting $p$-cubic
interpolant is continuous, but the gradient jumps across cell edges, with discontinuities
of size $O(h^3)$.
\end{enumerate}
In practice, both approaches perform well.
Note that above approximations are just one possible way to approximate the
cross derivatives. The final method proves rather robust with respect to the
actual approximation chosen. In fact, even if an $O(1)$ error is done in the
cross derivatives (e.g.~by setting them equal to zero), the method remains convergent,
though with a lower order of accuracy.

%=============================================================================================
\section{Benefits of Incorporating Gradient Information}
\label{sec:benefits_gradient}
%=============================================================================================
While the classical level set method, outlined in Sect.~\ref{subsec:classical_LS}, is
a powerful tool in representing and advecting surfaces, it suffers from some problems
and inconveniences. In this paper, we address three fundamental aspects,
and show how a gradient-augmented approach, as outlined in
Sect.~\ref{subsec:gradient-augmented_LS}, can ameliorate them:
\begin{itemize}
\item
Small structures are lost once below the grid resolution.
Gradients yield a certain level of subgrid resolution
(Sect.~\ref{subsec:small_structures}).
\item
An accurate approximation of the curvature involves difficulties.
With gradients, surface normals and curvature can be easily obtained from the
$p$-cubic Hermite interpolation (Sect.~\ref{subsec:derivative_quantities}).
\item
Accurate schemes for the advection equation \eqref{eq:advection_phi} involve large
stencils. With gradients, a third order accurate scheme can be formulated, with
optimally local stencils (Sect.~\ref{subsec:small_stencils}).
\end{itemize}

\begin{figure}
\centering
\begin{minipage}[t]{.31\textwidth}
\centering
\includegraphics[width=0.99\textwidth]{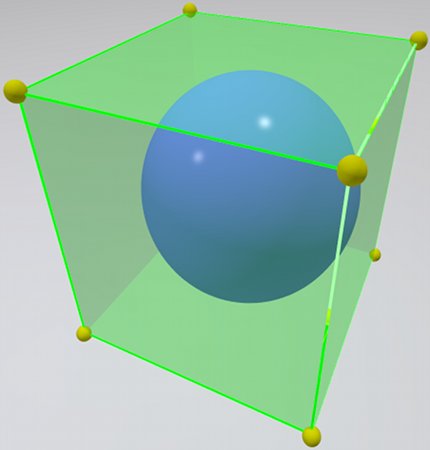}
\end{minipage}
\hfill
\begin{minipage}[t]{.31\textwidth}
\centering
\includegraphics[width=0.99\textwidth]{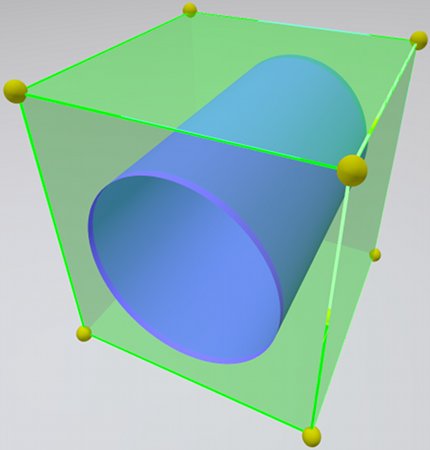}
\end{minipage}
\hfill
\begin{minipage}[t]{.31\textwidth}
\centering
\includegraphics[width=0.99\textwidth]{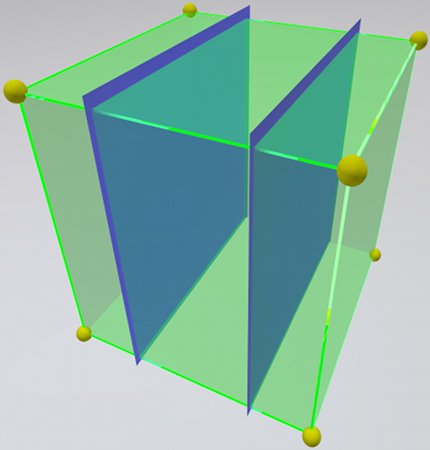}
\end{minipage}
\caption{Subgrid structures in 3D, defined by a tri-cubic:
a drop, a jet, and a film}
\label{fig:subgrid_3d}
\end{figure}

%---------------------------------------------------------------------------------------------
\subsection{Representation of Structures of Subgrid Size}
\label{subsec:small_structures}
%---------------------------------------------------------------------------------------------
Structures that are at least a few cells wide are represented well by the classical
level set method. However, smaller structures are only represented if they
contain grid points. This leads to the inconsistency that one and the same small
structure can be present (if a grid point happens to fall into it) or be missing
(if it happens to fall in between grid points). Furthermore, even if initially present,
small structures may vanish over the course of a computation due to approximation errors
in the numerical scheme, such as numerical diffusion or dispersion.
Small drops, jets or films (see Fig.~\ref{fig:subgrid_3d}) may be lost during a
computation, resulting in a ``loss of volume''. Other difficulties are the numerical
coalescence of nearby structures, or a numerical pinch-off in a thinning process.
In practice, the loss of small structures can be prevented using adaptive mesh
refinement (AMR) techniques \cite{Strain1999,MinGibou2007}, which add a significant
level of complexity, especially when high order approximations need to be preserved
across multiple levels of refinement. The problem of volume loss can be addressed by
enforcing conservation of volume, as proposed by Sussman and Fatemi \cite{SussmanFatemi1999}.
Such methods guarantee conservation of volume (up to a small approximation error), but
may yield incorrect topologies. An alternative approach is to augment the level set
function by Lagrangian particles, as proposed by
Enright, Fedkiw, Ferziger, and Mitchell \cite{EnrightFedkiwFerzigerMitchell2002}.
This latter method resolves the difficulties described above in a satisfactory manner,
but at the expense of simplicity.

Fig.~\ref{fig:subgrid_1d_cubic} shows a signed distance level set function $\phi$
(solid line) of a 1D ``bubble'' between two grid points. Classical level set methods
use a linear interpolation (dashed line). Hence, no structure is identified, since the
level set function has equal sign on the two grid points. Of course, the same structure
would be identified if a grid point fell within it, but it would be lost when
advected into the situation shown in Fig.~\ref{fig:subgrid_1d_cubic}.
Also, if one knows that $\phi$ is a signed distance function, the surface can be
identified, as shown by Chopp \cite{Chopp2001}. Unfortunately, this approach incurs
difficulties in higher space dimensions, where ambiguities arise. In addition, the
assumption of having a precise signed distance function is too limiting in many cases.

A gradient-augmented level set approach allows the representation of a full structure,
i.e.~two surfaces, between two neighboring grid points.
Fig.~\ref{fig:subgrid_1d_cubic} shows a cubic interpolant (dash-dotted line) along a cell
edge, constructed from the correct function values and gradients. In fact, a subgrid
structure is detected. Similarly, with bi-/tri-cubics, subgrid structures in 2D and 3D
can be approximately represented. Each subgrid structure shown in Fig.~\ref{fig:subgrid_3d}
is defined by a tri-cubic, as follows. For a cube of size $h$, we consider the function
values on the vertices $\phi=0.1$. The drop (sphere) is then obtained by providing
gradients
$\vec{\psi} = \tfrac{1}{3h}\prn{\vec{x}-\prn{\tfrac{1}{2},\tfrac{1}{2},\tfrac{1}{2}}}$.
The jet (cylinder) is obtained by
$\vec{\psi} = \tfrac{1}{2h}\prn{0,y-\tfrac{1}{2},z-\tfrac{1}{2}}$.
And the film (two planes) is obtained by
$\vec{\psi} = \tfrac{1}{h}\prn{0,y-\tfrac{1}{2},0}$.

At the same time, the example in Fig.~\ref{fig:subgrid_1d_cubic} indicates a major drawback
of $p$-cubic approaches: Level set functions are typically non-smooth (e.g.~signed distance
functions). Hermite $p$-cubics are smooth, hence the approximation is not very accurate
near kinks. As a result, even with $p$-cubic approaches, smaller structures may vanish
eventually, though significantly later than with classical level set methods.
A potential remedy can be to use higher order, or nonlinear interpolations, which we
shall investigate in future work.

\begin{figure}
\centering
\begin{minipage}[t]{.6\textwidth}
\centering
\includegraphics[width=0.5\textwidth]{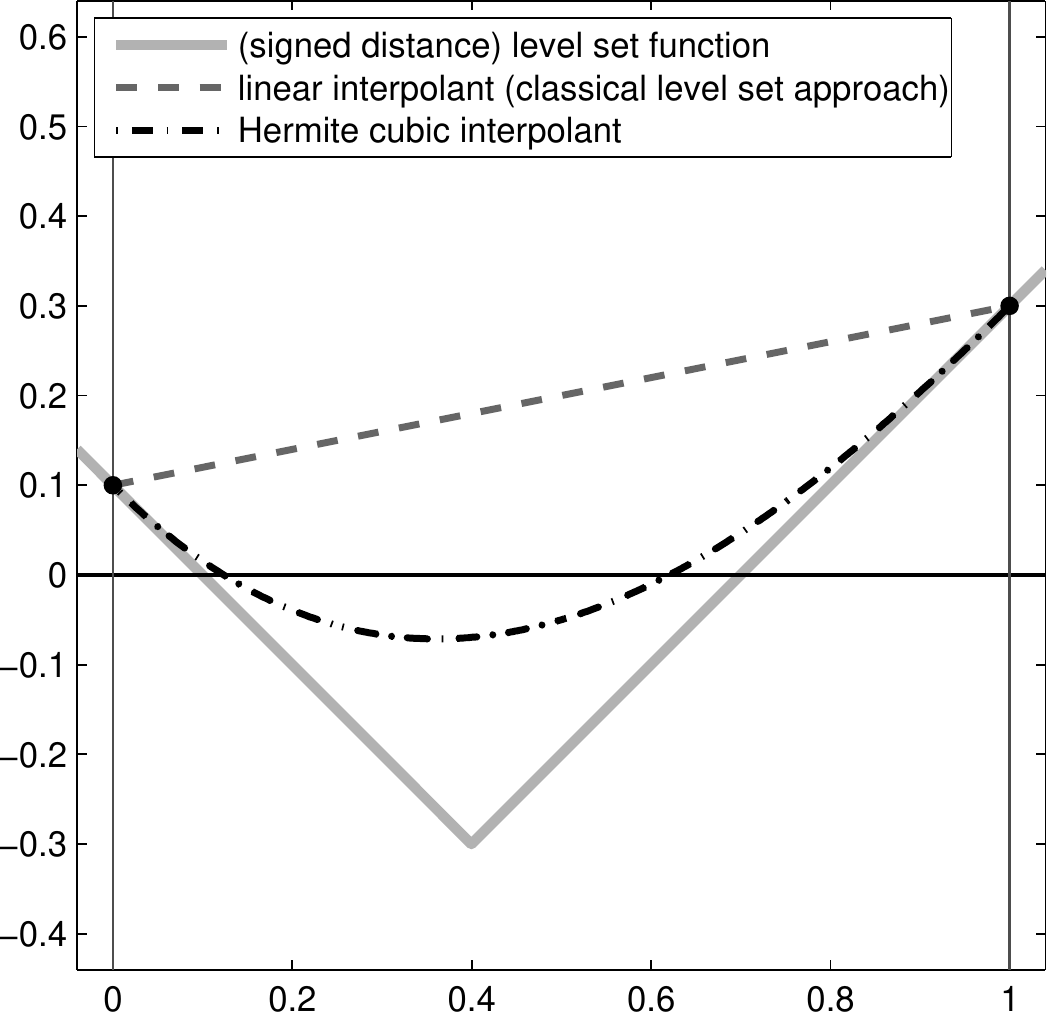}
\caption{A subgrid structure in 1D, not identified by linear interpolation,
but recovered by a Hermite cubic}
\label{fig:subgrid_1d_cubic}
\end{minipage}
\end{figure}

Theoretically, the gradient-augmented level set approach allows the representation of
(at least some) isolated structures of arbitrarily small size. Hence, the use of gradient
information is more than a mere increase of resolution. Of course, in practice there
are limitations to the size of the small structures that can be represented reliably.
In addition, only isolated subgrid structures can be represented. The structure shown
in Fig.~\ref{fig:subgrid_1d_cubic} is indistinguishable from two small structures in the
cell, whose outer boundaries are at the same positions as the boundaries of the single
structure.

%---------------------------------------------------------------------------------------------
\subsection{Approximation of Derivative Quantities}
\label{subsec:derivative_quantities}
%---------------------------------------------------------------------------------------------
In many applications (e.g.~in two-phase flow simulations with surface tension), normal
vectors and curvatures are required. In terms of the first and second derivatives of
the level set function $\phi$, the expressions are (here in 2D)
\begin{align}
\vec{n} &= \frac{\prn{\phi_x,\phi_y}^T}{(\phi_x^2+\phi_y^2)^\frac{1}{2}}\;,
\label{eq:surface_normal} \\
\kappa  &= \frac{\phi_{xx}\phi_y^2-2\phi_x\phi_y\phi_{xy}+\phi_{yy}\phi_x^2}
{(\phi_x^2+\phi_y^2)^\frac{3}{2}}\;.
\label{eq:curvature}
\end{align}
In classical level set methods, the required derivatives
$\phi_x$, $\phi_y$, $\phi_{xx}$, $\phi_{xy}$, $\phi_{yy}$ are typically obtained
from $\phi$ by central differences. Assuming that the level set function $\phi$ is known
with fourth order accuracy on the grid points, second order accurate approximations
to normals and curvature on the grid points are obtained from the
expressions \eqref{eq:surface_normal} and \eqref{eq:curvature}.

Away from grid points (such as it is required by the ghost fluid method), second order
accurate approximations can be obtained by using weighted averages. Several problems
with computing curvature from level set (and volume of fluid) functions, in particular
in connection with reinitialization \eqref{eq:reinitialization}, have been
presented by Raessi et al.~\cite{RaessiMostaghimiBussmann2007}.
In addition, even if a suitably high order advection scheme is used, under various
circumstances the level set function values can cease to be fourth order accurate.
For instance, when a WENO stencil crosses a discontinuity in the gradient
(e.g.~signed distance level set functions \eqref{eq:distance_function} possess
discontinuous derivatives), the accuracy of the approximation can
drop \cite{LiuOsherChan1994}.

Gradient-augmented level set methods offer new possibilities in the approximation of
derivative quantities. For instance, the $p$-cubic Hermite interpolation considered here
gives rise to a particularly simple way to obtain normals and curvatures at arbitrary
points. Inside each cell, all first and second derivatives can be obtained analytically
by differentiating the interpolant \eqref{eq:p-cubic_interpolant}. Normal vectors and
curvature are then simply obtained by \eqref{eq:surface_normal},
respectively \eqref{eq:curvature}.
Due to Thm.~\ref{thm:p-cubic_accuracy}, first derivatives are third order accurate,
and so is the obtained $\vec{n}$. Further, second derivatives are second order
accurate, and so is the obtained $\kappa$. The numerical results shown
in Sect.~\ref{subsubsec:numerics_curvature} verify this.
Hence, using the Hermite $p$-cubic, derivative quantities (of up to second order) can
be obtained everywhere with at least second order accuracy, by a simple recipe with
optimally local stencils, i.e.~they use information only from a single cell.

%---------------------------------------------------------------------------------------------
\subsection{Coherent Advection with Optimally Local Stencils}
\label{subsec:small_stencils}
%---------------------------------------------------------------------------------------------
In order to both accurately advect structures, and calculate derivative quantities,
high order schemes have to be used. In classical approaches, the stencils for high
order schemes reach over multiple cells. This results in a cumbersome
implementation on adaptive grids and near boundaries.

In Sect.~\ref{sec:scheme_CIR}, we present a semi-Lagrangian approach that solves
the advection problem with third order accuracy. The characteristic form of the
equations \eqref{eq:advection_phi} and \eqref{eq:advection_psi} is used to
update the function values and gradients at the grid points.
In each time step, the characteristics are traced backwards from the grid points,
and function values and gradients on the characteristic, at the prior time step,
are extracted from the Hermite interpolant, as defined
in Sect.~\ref{subsec:Hermite_interpolant}.
A key advantage of this approach is that at each grid point, the data is updated using
only information from a single adjacent cell. Hence, the gradient-augmented approach
delivers a high order advection with optimally local stencils.
It seems evident that the optimal locality should allow a simple treatment when combined
with adaptive mesh refinement and near boundaries. The detailed investigation of this
matter is left for future work.

%=============================================================================================
\section{Numerical Methodology of the Generalized CIR Scheme}
\label{sec:scheme_CIR}
%=============================================================================================
We consider the linear advection equation \eqref{eq:advection_phi}
with $\vec{v}=\vec{v}(\vec{x},t)$.
The evolution of the level set function and its gradient, given by
\eqref{eq:advection_phi} and \eqref{eq:advection_psi} can be rewritten as
\begin{equation}
\begin{split}
\phi_t+\vec{v}\cdot\nabla\phi &= 0 \\
\vec{\psi}_t+\vec{v}\cdot\nabla\vec{\psi} &= -\nabla\vec{v}\cdot\vec{\psi}\;.
\end{split}
\label{eq:evolution_phi_psi}
\end{equation}
For functions defined everywhere, the evolution for $\vec{\psi}$ is completely
determined by $\phi$. However, for functions known only on grid points, the
gradients $\vec{\psi}$ carry additional information that is not encoded in $\phi$.
For the moment, we assume the velocity field $\vec{v} = \prn{u,v,w}^T$, and the
velocity deformation matrix
\begin{equation*}
\nabla\vec{v}
= \begin{pmatrix}
\pd{u}{x} & \pd{v}{x} & \pd{w}{x} \\
\pd{u}{y} & \pd{v}{y} & \pd{w}{y} \\
\pd{u}{z} & \pd{v}{z} & \pd{w}{z}
\end{pmatrix}
\end{equation*}
to be exactly accessible everywhere.
The system \eqref{eq:evolution_phi_psi} consists of an advection part (left hand side),
and a source term for $\vec{\psi}$. Its characteristic form is
\begin{equation}
\frac{d\phi}{dt} = 0
\quad\text{and}\quad
\frac{d\vec{\psi}}{dt} = -\nabla\vec{v}\cdot\vec{\psi}
\quad\text{along}\quad
\frac{d\vec{x}}{dt} = \vec{v}(\vec{x},t)\;.
\label{eq:characteristic_equations}
\end{equation}
This system of ODE describes the evolution of $\phi$ and $\vec{\psi}$ along the
characteristic curves defined by $\frac{d\vec{x}}{dt} = \vec{v}(\vec{x},t)$.
We solve these equations by a generalization
of the CIR method by Courant, Isaacson, and Rees \cite{CourantIsaacsonRees1952}:
the characteristics \eqref{eq:characteristic_equations} are traced backwards from the
grid points, and the required data is obtained by the interpolation.
Note that, while the CIR method is, in general, non-conservative for nonlinear
conservation laws, its use is justified here, since equations \eqref{eq:advection_phi}
and \eqref{eq:advection_psi} are linear.

Let $\phi^n$, $\vec{\psi}^n$ denote the data at time $t$, and
$\phi^{n+1}$, $\vec{\psi}^{n+1}$ denote the data at time $t+\Delta t$.
We find the characteristic curves that pass through grid points at time $t+\Delta t$,
and solve \eqref{eq:characteristic_equations} along these characteristic curves.
One time step of the scheme from $t$ to $t+\Delta t$ reads as follows:
\begin{enumerate}
\item
For each grid point $\vec{x}_j$, solve the characteristic equation
\begin{equation}
\pd{}{\tau}\vec{x}(\tau) = \vec{v}(\vec{x}(\tau),\tau)\;,\quad
\vec{x}(t+\Delta t) = \vec{x}_j\;,
\label{eq:cir_characteristics}
\end{equation}
backwards from $\tau = t+\Delta t$ to $\tau = t$,
to find the point $\mathring{\vec{x}}_j = \vec{x}(t)$. The
characteristic curve that passes through $\mathring{\vec{x}}_j$
at time $t$, reaches $\vec{x}_j$ at time $t+\Delta t$.
\item
Assign
$\phi^{n+1}_j(\vec{x}_i) = \phi(\mathring{\vec{x}}_j)$ and
$\mathring{\vec{\psi}}_j = \vec{\psi}(\mathring{\vec{x}}_j)$,
both evaluated analytically from the Hermite interpolation \eqref{eq:p-cubic_interpolant}
in the cell that $\mathring{\vec{x}}_j$ falls into.
\item
Solve \eqref{eq:characteristic_equations} forward along the characteristic curve,
i.e.~solve
\begin{equation}
\pd{}{\tau}\vec{\psi}(\tau) = -\nabla\vec{v}(\vec{x}(\tau),\tau)\cdot\vec{\psi}(\tau)\;,
\quad \vec{\psi}(t) = \mathring{\vec{\psi}}_j\;,
\label{eq:cir_evolution_psi}
\end{equation}
from $\tau = t$ to $\tau = t+\Delta t$.
Assign $\vec{\psi}^{n+1}_j(\vec{x}_i) = \vec{\psi}(t+\Delta t)$.
\end{enumerate}
In principle, there is no requirement for the backward characteristic curves to
remain in a single cell, as long as the integration scheme
for \eqref{eq:cir_characteristics} is accurate enough to prevent the approximate
backwards characteristic curves from intersecting (which may cause oscillations).
However, in practice, it is often convenient, and not too restrictive,
to choose $\Delta t<\tfrac{v_{\text{max}}}{2} h$, where $v_{\text{max}}$ is
the maximum magnitude of $\vec{v}$, and $h=\min\{\Delta x,\Delta y,\Delta z\}$
is the grid size. This guarantees that characteristic curves from different grid points
do not intersect, and that characteristic curves do not cross multiple cells.

As proved in Thm.~\ref{thm:p-cubic_accuracy}, the interpolation approximates $\phi$
with fourth order accuracy and $\vec{\psi}$ with third order accuracy. Therefore,
in order to achieve the maximum possible order of accuracy, the backwards characteristics
equation \eqref{eq:cir_characteristics} needs to be solved with fourth order accuracy
in each time step.\footnote{This then yields a globally third order in time algorithm,
see Sect.~\ref{subsec:convergence}.}
One step of the Shu-Osher RK3 method \cite{ShuOsher1988} does the job.
Since gradients are generally one order less accurate, their evolution
equation \eqref{eq:cir_evolution_psi} needs to be solved with third order accuracy.
One step of Heun's RK2 method (explicit trapezoidal) does the job.
However, in Sect.~\ref{sec:numerical_analysis} we outline another possibility, which is to
systematically inherit the update rule for \eqref{eq:cir_evolution_psi} from the
scheme used for \eqref{eq:cir_characteristics}. We also investigate the accuracy and
stability of the gradient-augmented scheme.

In one space dimension, for constant velocity fields, the presented approach is
equivalent to the CIP method \cite{TakewakiNishiguchiYabe1985,TakewakiYabe1987},
which also uses a $p$-cubic interpolant. Note that the presented gradient-augmented
level set approach is not limited to $p$-cubic interpolants. Within the setting of
superconsistency, introduced in Sect.~\ref{subsec:superconsistency}, the
projection step, defined in Sect.~\ref{subsec:projection}, can be replaced by
other forms of projection (see Rem.~\ref{rem:other_projection}).

The gradient-augmented level set method is not a finite difference method, since
differential operators are not approximated. Instead, fundamental properties of the
exact solution to the underlying equation are used, and derivatives are evaluated
analytically from the interpolation patch. The approach is based on local basis
functions, as a finite element method is. However, the update rule is very different
than for classical finite element approaches.

The generalized CIR approach is cell-based, thus preserving the benefits of the level
set method, such as handling of topology changes, parallelizability, etc.
In addition, it is optimally local: The new data values at a grid point use only
information from corner points of a single adjacent cell. Compared to WENO schemes,
which use information multiple cells away, the CIR method promises an easier treatment of
approximations on locally refined meshes and near domain boundaries. In addition, the
locality allows smaller structures to get close together without spoiling the accuracy
of the approximation.

%---------------------------------------------------------------------------------------------
\subsection{Boundary Conditions}
%---------------------------------------------------------------------------------------------
When solving the linear advection equation \eqref{eq:advection_phi} on a domain $\Omega$
with outward normals $\vec{n}$, boundary conditions have to be prescribed wherever
the flow enters the domain, i.e.~$\vec{v}\cdot\vec{n}<0$.
The accurate treatment of boundary conditions is a common challenge in high order
methods. For level set methods, this is important whenever the interface is close to
the boundary. In WENO methods, ghost points have to be created using appropriate
extrapolations. Since WENO stencils reach over multiple cells, not only do inflow
boundaries pose a challenge, but also outflow boundaries, at which the actual
equation does not require any boundary conditions.

In contrast, the gradient-augmented CIR method presented above, treats outflow
boundaries naturally: no information has to be prescribed, since characteristics come
from inside the domain. At inflow boundaries, information for both $\phi$ and
$\vec{\psi}$ has to be prescribed. Below we give two illustrative examples that show
how to create the required information from the advection
equation \eqref{eq:advection_phi}, for the two most common types of boundary conditions.
Other types of boundary conditions can be treated similarly.

Consider a domain boundary that is aligned with cell edges. WLOG assume that this
boundary is perpendicular to $\vec{n} = (1,0,0)$. Then
\begin{itemize}
\item\textbf{Dirichlet boundary conditions}
prescribe $\phi$ on the boundary, which is perpendicular to $\vec{n} = (1,0,0)$.
Therefore, all partial derivatives perpendicular to $\vec{n}$, i.e.~$\phi_y$
and $\phi_z$, as well as $\phi_t$, are prescribed as well. The missing derivative
$\phi_x$ on the boundary follows from the equation \eqref{eq:advection_phi} as
\begin{equation*}
\phi_x = -\frac{\phi_t+v\phi_y+w\phi_z}{u}\;.
\end{equation*}
Note that $u\neq 0$, since $\vec{v}\cdot\vec{n}<0$.
\item\textbf{Neumann boundary conditions}
prescribe $\phi_x$ on the boundary. Thus equation \eqref{eq:advection_phi} yields a
linear advection equation on the boundary
\begin{equation}
\phi_t+v\phi_y+w\phi_z = -u\phi_x\;,
\label{eq:Neumann_bc_evolution}
\end{equation}
with initial conditions given by the values of $\phi$ on the boundary at $t=0$.
Equation \eqref{eq:Neumann_bc_evolution} can again be solved using the
gradient-augmented CIR scheme. This yields the function values $\phi$ and the
derivatives perpendicular to $\vec{n}$, i.e.~$\phi_y$ and $\phi_z$.
\end{itemize}

%=============================================================================================
\section{Analysis of the Gradient-Augmented CIR Method}
\label{sec:numerical_analysis}
%=============================================================================================
Consider the numerical scheme presented in Sect.~\ref{sec:scheme_CIR}, for
the linear advection equation \eqref{eq:advection_phi}, with $\vec{v}=\vec{v}(\vec{x},t)$
given. Let $\vec{X}(\vec{x},t,\tau)$ be the solution to the characteristic
equations \eqref{eq:characteristic_equations}, defined by
\begin{equation}
\pd{}{\tau}\vec{X}(\vec{x},t,\tau) = \vec{v}(\vec{X}(\vec{x},t,\tau),\tau)\;,
\quad\vec{X}(\vec{x},t,t) = \vec{x}\;.
\label{eq:characteristics_solution}
\end{equation}
The solution operator to the linear advection equation \eqref{eq:advection_phi} is then
\begin{equation}
S_{t,t+\Delta t}\phi(\vec{x},t)
= \phi(\vec{x},t+\Delta t)
= \phi(\vec{X}(\vec{x},t+\Delta t,t),t)\;.
\label{eq:evolution_exact}
\end{equation}
Now consider a numerical approximation $\vec{\mathcal{X}}$ to $\vec{X}$, as arising from
a numerical ODE solver, e.g.~a Runge-Kutta scheme. Let the corresponding approximate
solution operator to \eqref{eq:advection_phi} be denoted by
\begin{equation}
A_{t,t+\Delta t}\phi(\vec{x},t)
= \phi(\vec{\mathcal{X}}(\vec{x},t+\Delta t,t),t)\;.
\label{eq:evolution_approximate}
\end{equation}

%---------------------------------------------------------------------------------------------
\subsection{Superconsistency}
\label{subsec:superconsistency}
%---------------------------------------------------------------------------------------------
Equation \eqref{eq:evolution_exact} provides the exactly evolved solution, while
equation \eqref{eq:evolution_approximate} defines an approximately evolved solution,
both at every point $\vec{x}$ in the computational domain.
In the actual numerical method, we consider only the (approximate) characteristic curves
that go through the grid points at time $t+\Delta t$. However, we can use the solution
operator \eqref{eq:evolution_approximate} to derive a natural update rule for the
gradients. The gradient of the approximately advected function is
\begin{equation}
\nabla\prn{A_{t,t+\Delta t}\phi(\vec{x},t)}
= \nabla\vec{\mathcal{X}}(\vec{x},t+\Delta t,t)
\cdot\nabla\phi(\vec{\mathcal{X}}(\vec{x},t+\Delta t,t),t)\;,
\end{equation}
which yields the update rule for approximate function values $\phi$ and
gradients $\vec{\psi}$
\begin{equation}
\begin{cases}
\phi(\vec{x},t+\Delta t)
= \phi(\vec{\mathcal{X}}(\vec{x},t+\Delta t,t),t) \\
\vec{\psi}(\vec{x},t+\Delta t)
= \nabla\vec{\mathcal{X}}(\vec{x},t+\Delta t,t)
\cdot\vec{\psi}(\vec{\mathcal{X}}(\vec{x},t+\Delta t,t),t)
\end{cases}
\label{eq:superconsistency}
\end{equation}

\begin{defn}
\label{def:superconsistency}
We call a gradient-augmented scheme superconsistent, if it satisfies
\eqref{eq:superconsistency}.
\end{defn}

\begin{exmp}
Using forward Euler to approximate \eqref{eq:characteristics_solution} yields
the superconsistent scheme
\begin{align*}
\mathring{\vec{x}} &= \mathcal{X}(\vec{x},t+\Delta t,t)
= \vec{x}-\Delta t\;\vec{v}(\vec{x},t+\Delta t) \\
\nabla\mathring{\vec{x}} &= I-\Delta t\;\nabla\vec{v}(\vec{x},t+\Delta t) \\
\phi(\vec{x},t+\Delta t) &= \phi(\mathring{\vec{x}},t) \\
\vec{\psi}(\vec{x},t+\Delta t)
&= \nabla\mathring{\vec{x}}\cdot\vec{\psi}(\mathring{\vec{x}},t)
\end{align*}
\end{exmp}
While this scheme is particularly simple, it is only first order accurate.
A globally third order accurate scheme is obtained when using a third order
Runge-Kutta scheme, such as in the following example.
\begin{exmp}
\label{ex:superconsistent_ShuOsher}
Using the Shu-Osher scheme \cite{ShuOsher1988}
for \eqref{eq:characteristics_solution} yields the superconsistent scheme
\begin{align*}
\vec{x}_1 &= \vec{x}-\Delta t\;\vec{v}(\vec{x},t+\Delta t) \\
\nabla\vec{x}_1 &= I-\Delta t\;\nabla\vec{v}(\vec{x},t+\Delta t) \\
\vec{x}_2 &= \vec{x}-\Delta t
\prn{\tfrac{1}{4}\vec{v}(\vec{x},t+\Delta t)+\tfrac{1}{4}\vec{v}(\vec{x}_1,t)} \\
\nabla\vec{x}_2 &= I-\Delta t
\prn{\tfrac{1}{4}\nabla\vec{v}(\vec{x},t+\Delta t)
+\tfrac{1}{4}\nabla\vec{x}_1\cdot\nabla\vec{v}(\vec{x}_1,t)} \\
\mathring{\vec{x}} &= \vec{x}-\Delta t
\prn{\tfrac{1}{6}\vec{v}(\vec{x},t+\Delta t)+\tfrac{1}{6}\vec{v}(\vec{x}_1,t)
+\tfrac{2}{3}\vec{v}(\vec{x}_2,t+\tfrac{1}{2}\Delta t)} \\
\nabla\mathring{\vec{x}} &= I-\Delta t
\prn{\tfrac{1}{6}\nabla\vec{v}(\vec{x},t+\Delta t)
+\tfrac{1}{6}\nabla\vec{x}_1\cdot\nabla\vec{v}(\vec{x}_1,t)
+\tfrac{2}{3}\nabla\vec{x}_2\cdot\nabla\vec{v}(\vec{x}_2,t+\tfrac{1}{2}\Delta t)} \\
\phi(\vec{x},t+\Delta t) &= \phi(\mathring{\vec{x}},t) \\
\vec{\psi}(\vec{x},t+\Delta t)
&= \nabla\mathring{\vec{x}}\cdot\vec{\psi}(\mathring{\vec{x}},t)
\end{align*}
\end{exmp}
Of course, it is not necessary to use a superconsistent scheme to update $\vec{\psi}$.
In fact, other schemes to approximate \eqref{eq:advection_psi} might be more
accurate and/or more efficient. However, superconsistent schemes have the advantage
that the gradients are evolved exactly as if the function values were evolved
everywhere, and then the gradients were obtained by differentiating the function.
This increases the coherence between function values and derivatives. In addition,
superconsistent schemes can be analyzed in function spaces, without actually having
to consider an evolution for the gradient.

%---------------------------------------------------------------------------------------------
\subsection{Projection}
\label{subsec:projection}
%---------------------------------------------------------------------------------------------
The update rule \eqref{eq:superconsistency} uses the function values $\phi$ and the
gradients $\vec{\psi}$ at $\mathring{\vec{x}}_j = \mathcal{X}(\vec{x}_j,t+\Delta t,t)$
for each grid point $\vec{x}_j$. The point $\mathring{\vec{x}}_j$ is in general not a
grid point. Hence, in the gradient-augmented numerical scheme,
$\phi(\mathring{\vec{x}}_j,t)$ and $\vec{\psi}(\mathring{\vec{x}}_j,t)$ are defined
by the Hermite interpolation.
Each time step of a superconsistent gradient-augmented scheme can be interpreted
(in a function space) as an approximate advection step, followed by a projection step
\begin{equation*}
M_{t,t+\Delta t} = PA_{t,t+\Delta t}\;.
\end{equation*}
At time $t$, the level set function $\phi$ is represented by the cell-based $p$-cubic
interpolant approximation. This function is then advanced in time to $t+\Delta t$ by
the approximate advection operator $A_{t,t+\Delta t}$. Then a new representation in
terms of cell-based $p$-cubic interpolants is obtained by the projection operator $P$,
which uses the function and gradient values of the advected function at the grid points.

In this paper, the projection $P$ is given by the $p$-cubic Hermite interpolant defined in
equation \eqref{eq:p-cubic_interpolant}, with the proviso that the required cross
derivatives that appear in \eqref{eq:p-cubic_interpolant} are obtained by one of the
(various possible) finite differentiation schemes described
in Sect.~\ref{subsec:Hermite_interpolant}.

In general, the operator $A_{t,t+\Delta t}$ alone is a much better approximation to
$S_{t,t+\Delta t}$ than the combined $M_{t,t+\Delta t} = PA_{t,t+\Delta t}$. However,
the projection step $P$ is required to be able to represent the numerical approximation
to $\phi$, at a given instance in time, by a finite amount of data. For the
choice of $P$ used in this paper, a function $P\phi(\vec{x})$ is uniquely defined by the
function values $\phi(\vec{x}_i)$ and gradient values $\nabla\phi(\vec{x}_i)$
on the grid points $\vec{x}_i$. Hence, in the numerical scheme, the advection
operator $A_{t,t+\Delta t}$ has to be evaluated only along the characteristic curves
that go through the grid points, using only the function values and derivatives there.

\begin{rem}
\label{rem:other_projection}
In principle, the presented generalized CIR algorithm works for any projection which
depends on a finite amount of information that can be extracted from the function being
projected. A superconsistent scheme is defined by selecting an approximate advection
scheme and an appropriate projection strategy. The $p$-cubic interpolation projection
used here is linear, generalizes nicely to higher space dimensions, and gives rise to
a simple algorithm. However, other projections may exist, providing increased subgrid
resolution, or other advantages. Obvious candidates include the incorporation of higher
order information, such as curvature. We plan to investigate these, and other
alternatives, in future work.
\end{rem}

%---------------------------------------------------------------------------------------------
\subsection{Consistency}
\label{subsec:consistency}
%---------------------------------------------------------------------------------------------
We investigate the consistency of the method, by estimating the truncation error after
applying a single step of the numerical scheme to the exact solution. Let $\Delta t$
denote the time step, and $h = \max\{\Delta x,\Delta y,\Delta z\}$ the grid size.

\begin{thm}[consistency]
Assume that the exact solution $\phi$ is smooth with bounded derivatives up to fourth order,
and that the velocity field $\vec{v}(\vec{x},t)$ is smooth with bounded derivatives up to
third order. If a third order method is used to integrate the characteristic equations,
then the presented gradient-augmented method with $\Delta t \propto h$
is consistent, with errors that are $O(h^4)$ in the level set function $\phi$,
and $O(h^3)$ in the gradient $\vec{\psi}$.
\end{thm}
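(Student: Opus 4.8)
The plan is to decompose the one-step error of the combined operator $M_{t,t+\Delta t} = PA_{t,t+\Delta t}$ into two contributions: the error of the approximate advection operator $A_{t,t+\Delta t}$ relative to the exact operator $S_{t,t+\Delta t}$, and the error introduced by the projection $P$. By the triangle inequality, it suffices to estimate each separately. First I would bound $\|S_{t,t+\Delta t}\phi - A_{t,t+\Delta t}\phi\|$. Since $S_{t,t+\Delta t}\phi(\vec{x},t) = \phi(\vec{X}(\vec{x},t+\Delta t,t),t)$ and $A_{t,t+\Delta t}\phi(\vec{x},t) = \phi(\vec{\mathcal{X}}(\vec{x},t+\Delta t,t),t)$, the difference is controlled by $\|\nabla\phi\|_\infty \cdot |\vec{X} - \vec{\mathcal{X}}|$. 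A third order Runge-Kutta scheme applied over a single step of size $\Delta t$ produces a local truncation error $|\vec{X} - \vec{\mathcal{X}}| = O(\Delta t^4)$, using the smoothness of $\vec{v}$ up to third order. With $\Delta t \propto h$ this gives $O(h^4)$ in $\phi$. For the gradient, I would differentiate: $\nabla(A_{t,t+\Delta t}\phi) = \nabla\vec{\mathcal{X}} \cdot \nabla\phi(\vec{\mathcal{X}},t)$ versus $\nabla(S_{t,t+\Delta t}\phi) = \nabla\vec{X} \cdot \nabla\phi(\vec{X},t)$; the difference involves both $|\vec{\mathcal{X}} - \vec{X}| = O(h^4)$ and $|\nabla\vec{\mathcal{X}} - \nabla\vec{X}|$, and the latter is one order worse — $O(h^3)$ — because the variational equation for $\nabla\vec{\mathcal{X}}$ is effectively integrated with a scheme that loses an order (or, in the superconsistent formulation, the Jacobian update inherits the derivative of the RK scheme, whose truncation error is $O(\Delta t^3)$ after differentiation in the spatial variable). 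This yields the $O(h^3)$ bound on $\vec{\psi}$.

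Next I would handle the projection error. The key input here is Theorem~\ref{thm:p-cubic_accuracy} together with Lemma~\ref{lem:p-cubic_accuracy_data}: the $p$-cubic Hermite interpolant satisfies $\partial^{\vec{\alpha}}\mathcal{H} = \partial^{\vec{\alpha}}\phi + O(h^{4-|\vec{\alpha}|})$ provided the data on the cell vertices are known to accuracy $O(h^{4-|\vec{\alpha}|})$. The function values and gradients fed into $P$ are exactly the grid-point values of $A_{t,t+\Delta t}\phi$, which by the first part agree with those of the exact solution $S_{t,t+\Delta t}\phi$ to $O(h^4)$ and $O(h^3)$ respectively — precisely the tolerances $h^{4-0}$ and $h^{4-1}$ demanded by the theorem. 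The required cross derivatives are built from these grid values by the finite-difference formulas of Sect.~\ref{subsec:Hermite_interpolant} to accuracy $O(h^2) = O(h^{4-2})$ (or $O(h) = O(h^{4-3})$ for the third cross derivative), again matching the required tolerance. Hence $\|PA_{t,t+\Delta t}\phi - A_{t,t+\Delta t}\phi\|_\infty = O(h^4)$, and differentiating once costs one order, giving $O(h^3)$ for the gradient component.

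Combining: $\|M_{t,t+\Delta t}\phi - S_{t,t+\Delta t}\phi\| \le \|PA_{t,t+\Delta t}\phi - A_{t,t+\Delta t}\phi\| + \|A_{t,t+\Delta t}\phi - S_{t,t+\Delta t}\phi\|$, and each term is $O(h^4)$ in $\phi$ and $O(h^3)$ in $\vec{\psi}$, with constants bounded in terms of $\|D^4\phi\|_\infty$ and the relevant derivatives of $\vec{v}$. That establishes consistency with the claimed orders.

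The main obstacle I anticipate is the bookkeeping for the gradient in the advection-error step — specifically, verifying that the Jacobian approximation $\nabla\vec{\mathcal{X}}$ really does achieve $O(h^3)$ accuracy and not worse. One must check that the (possibly superconsistent) update rule for $\nabla\vec{\mathcal{X}}$ is genuinely a consistent discretization of the variational equation $\partial_\tau(\nabla\vec{X}) = \nabla\vec{v} \cdot \nabla\vec{X}$ of the appropriate order, and that differentiating the composite map $\vec{x} \mapsto \phi(\vec{\mathcal{X}}(\vec{x}),t)$ in the projection step does not secretly lose a further order when the cross-derivative reconstruction is only $O(h)$ accurate for the third mixed derivative. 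Tracking which derivative of $\phi$ gets paired with which power of $h$ in each term of the Leibniz expansion, and confirming none of them falls below $O(h^3)$, is where the care is needed; everything else reduces to standard Taylor-expansion and Runge-Kutta truncation-error estimates.
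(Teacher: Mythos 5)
Your proposal follows essentially the same route as the paper: split the one-step error of $M_{t,t+\Delta t}=PA_{t,t+\Delta t}$ into the advection error ($A$ versus $S$, controlled by the $O(\Delta t^4)$ one-step error of the RK3 integrator and of its Jacobian) and the projection error (controlled by Thm.~\ref{thm:p-cubic_accuracy} and Lemma~\ref{lem:p-cubic_accuracy_data}), then combine with the triangle inequality. The only substantive difference is that you concede an order on the Jacobian --- the paper's estimate \eqref{eq:error_advection_psi} gives $\abs{\nabla\vec{\mathcal{X}}-\nabla\vec{X}}=O(\Delta t^4)$ for a superconsistent scheme, since differentiating a smooth $O(\Delta t^4)$ truncation error in $\vec{x}$ costs no power of $\Delta t$ --- but this is harmless, as the $O(h^3)$ gradient bound is set by the projection step in either case.
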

\begin{proof}
One step of third order scheme is fourth order accurate is fourth order accurate
\begin{equation*}
\mathcal{X}(\vec{x},t+\Delta t,t)
= X(\vec{x},t+\Delta t,t)+O(\Delta t^4)\;.
\end{equation*}
Thus it yields a fourth order accurate approximation to the advected solution
\begin{equation}
\begin{split}
 & \abs{\phi(\mathcal{X}(\vec{x},t+\Delta t,t))-\phi(\vec{x},t+\Delta t)} \\
=& \abs{\phi(\mathcal{X}(\vec{x},t+\Delta t,t))-\phi(X(\vec{x},t+\Delta t,t))} \\
=& \abs{\nabla\phi(X(\vec{x},t+\Delta t,t))}\,O(\Delta t^4)\;.
\end{split}
\label{eq:error_advection_phi}
\end{equation}
The corresponding error in the gradient $\nabla\phi$ is obtained using the triangle
inequality, after adding and subtracting the Jacobian of the approximate advection step
times the gradient of $\phi$ at the exact characteristic at time $t$. For a better
transparency of the formulas, here we omit the arguments for
$\vec{X}$, $\vec{\mathcal{X}}$, $\nabla\vec{X}$, and $\nabla\vec{\mathcal{X}}$,
which are always evaluated at $(\vec{x},t+\Delta t,t)$.
\begin{equation}
\begin{split}
&\abs{\nabla\vec{\mathcal{X}}
\cdot\nabla\phi(\vec{\mathcal{X}},t)
-\nabla\phi(\vec{x},t+\Delta t)} \\
\le &
\abs{\nabla\vec{\mathcal{X}}\cdot\nabla\phi(\vec{\mathcal{X}},t)
-\nabla\vec{\mathcal{X}}\cdot\nabla\phi(\vec{X},t)}
+\abs{\nabla\vec{\mathcal{X}}\cdot\nabla\phi(\vec{X},t)
-\nabla\vec{X}\cdot\nabla\phi(\vec{X},t)} \\
= &
\abs{\nabla\vec{\mathcal{X}}\cdot
\prn{\nabla\phi(\vec{\mathcal{X}},t)-\nabla\phi(\vec{X},t)}}
+\abs{\prn{\nabla\vec{\mathcal{X}}-\nabla\vec{X}}\cdot\nabla\phi(\vec{X},t)} \\
\le &
\abs{\nabla\vec{\mathcal{X}}}\abs{D^2\phi(\vec{X})}
\underbrace{\abs{\vec{\mathcal{X}}-\vec{X}}}_{=O(\Delta t^4)}
+\underbrace{\abs{\nabla\prn{\vec{\mathcal{X}}-\vec{X}}}}_{=O(\Delta t^4)}
\abs{\nabla\phi(\vec{X},t)}
= O(\Delta t^4)\;.
\end{split}
\label{eq:error_advection_psi}
\end{equation}
Equation \eqref{eq:error_advection_psi} yields the error in the gradient, obtained
by a function $\phi$ that is defined everywhere. Due to Def.~\ref{def:superconsistency},
this is exactly the error on $\vec{\psi}$ in any superconsistent scheme.

As proved in Thm.~\ref{thm:p-cubic_accuracy}, the projection of a smooth function $\phi$
incurs a fourth order error in the function values, and a third order error in the
gradients
\begin{align}
\abs{P\phi(\vec{x})-\phi(\vec{x})} &= O(h^4)\;,
\label{eq:error_projection_phi} \\
\abs{\nabla(P\phi)(\vec{x})-\nabla\phi(\vec{x})} &= O(h^3)\;.
\label{eq:error_projection_psi}
\end{align}
Starting with the exact solution $\phi(\vec{x},t)$, we denote the approximately
advected function
\begin{equation*}
\phi^*(\vec{x}) = A_{t,t+\Delta t}\phi(\vec{x},t)
= \phi(\mathcal{X}(\vec{x},t+\Delta t,t),t)\;.
\end{equation*}
Followed by the projection operator, we obtain one step of the numerical scheme.
Adding and subtracting the approximately advected solution, using the
triangle inequality, and using the estimates \eqref{eq:error_advection_phi}
and \eqref{eq:error_projection_phi}, we obtain the error in function value
\begin{equation*}
\begin{split}
& \abs{(P\phi^*)(\vec{x})-\phi(\vec{x},t+\Delta t)} \\
\le &
\underbrace{\abs{(P\phi^*)(\vec{x})-\phi^*(\vec{x})}}_{=O(h^4)}
+\underbrace{\abs{\phi(\mathcal{X}(\vec{x},t+\Delta t,t),t)-\phi(\vec{x},t+\Delta t)}}
_{=O(\Delta t^4)}\;.
\end{split}
\end{equation*}
Hence, with $\Delta t \propto h$, one step is fourth order accurate in $\phi$.
Similarly, the estimates \eqref{eq:error_advection_psi}
and \eqref{eq:error_projection_psi} yield that one step of the numerical scheme is
third order accurate in $\vec{\psi}$.
\end{proof}

When taking multiple steps of the scheme, the approximate advection and projection
operators are applied iteratively. Taking a step from a function that approximates the
true solution with fourth order accuracy in function values and third order accuracy
in gradient values, exactly preserves the above accuracies
because Thm.~\ref{thm:p-cubic_accuracy} applies to the projection.
Thus the gradient-augmented CIR method is consistent, with a local truncation error
that is fourth order in $\phi$ and third order in $\vec{\psi}$.

\begin{rem}
Observe that for superconsistent schemes,
such as the one given in Example~\ref{ex:superconsistent_ShuOsher}, the approximate
advection itself preserves gradients with fourth order accuracy, while the projection
yields (and requires) only a third order accurate approximation of gradients.
Therefore, it can be more efficient to solve the gradient evolution
equation \eqref{eq:cir_evolution_psi} with a Runge-Kutta 2 scheme, while not sacrificing
accuracy.
\end{rem}

\begin{rem}
The orders of accuracy derived above are achieved for smooth functions with derivatives
up to fourth order. Note that level set functions are frequently signed distance
functions \eqref{eq:distance_function} and as such not differentiable along certain sets
of measure zero. In cells that contain such ``ridges'' of the level set function, the
accuracy of the scheme typically drops to first order. For structures that are multiple
grid cells wide, this drop in accuracy does not affect the evolution of the zero contour.
In contrast, structures of subgrid size may be evolved only with first order accuracy,
which is of course still better than losing them completely.
The situation shown in Fig.~\ref{fig:subgrid_1d_cubic} visualizes this effect.
Note further that even near ridges, the presented numerical scheme does not create
spurious oscillations in the first derivative. In 1D, this follows since the Hermite
cubic projection minimizes the $L^2$ norm of the second derivative
(see Rem.~\ref{rem:cubic_1d_minimize_l2}). Although this simple principle does not
transfer directly to 2D or 3D, similar stability properties as in the 1D case are
observed (see Sect.~\ref{sec:numerical_results}).
\end{rem}

%---------------------------------------------------------------------------------------------
\subsection{Stability}
\label{subsec:stability}
%---------------------------------------------------------------------------------------------
Here we investigate the stability of the method, i.e.~integration over a fixed time
interval $0 \leq t \leq T$, with $\Delta t \propto h \to 0$, does not lead
to a blow up of neither $\phi$ nor $\vec{\psi}$.
Proving stability for the gradient-augmented advection scheme is difficult, since the
application of the projection $P$ to a smooth function $\phi$ can both increase the
function values (because of the effect of the gradients at the grid points), and
the gradients (a cubic can have steeper slopes than a given smooth function with the
same values and gradients at the grid points). The following theorem shows the
stability of the method in the constant coefficient case in one space dimension.

\begin{thm}[stability]
The presented gradient-augmented method, considered in one space dimension and
for a constant velocity field, is stable.
\end{thm}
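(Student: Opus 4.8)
The plan is to reduce the constant-coefficient 1D problem to a fixed linear operator acting on the periodic Hermite-cubic space, and then show that the relevant iteration operator is power-bounded. With $v$ constant, the exact advection operator is a pure shift, $\mathcal{X}(x,t+\Delta t,t) = x - v\Delta t$, and this shift is recovered \emph{exactly} by any consistent ODE integrator, so one step of the scheme is $M = P\circ T_{v\Delta t}$, where $T_{a}$ is translation by $a$ and $P$ is the Hermite-cubic projection of Sect.~\ref{subsec:Hermite_interpolant}. Because translation commutes with $P$ in an obvious way (translating the data and then interpolating equals interpolating and then translating), stability over $0\le t\le T$ reduces to bounding $\|M^N\|$ with $N\Delta t = T$, i.e.\ to showing that $M$ is power-bounded on an appropriate norm. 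First I would fix the function space: the space $V_h$ of periodic piecewise-Hermite-cubics on the grid, parametrized by the pairs $(\phi^i,\phi_x^i)$, with the natural norm $\|\,\cdot\,\|$ being either the $\sup$ over $i$ of $\max(|\phi^i|, h|\phi_x^i|)$, or an $L^2$-type norm on the interpolant; I would pick whichever makes the next step cleanest.

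Next I would move to Fourier space. On a uniform periodic grid, $M$ is a convolution operator in the grid index, so it block-diagonalizes: for each wavenumber $\theta\in[-\pi,\pi)$ the action on the Fourier mode is a $2\times 2$ matrix $\widehat{M}(\theta)$ acting on $(\hat\phi,\widehat{h\phi_x})$. I would compute $\widehat{M}(\theta)$ explicitly from the 1D interpolant \eqref{eq:Hermite_cubic_1D}--\eqref{eq:Hermite_cubic_1D_prime} and the basis functions $f(x)=1-3x^2+2x^3$, $g(x)=x(1-x)^2$: write $v\Delta t = (m+\mu)h$ with $m\in\mathbb{Z}$ and $\mu\in[0,1)$, evaluate $\mathcal{H}$ and $\mathcal{H}'$ at the backward foot in the appropriate cell, and collect the coefficients multiplying the four data values at the two adjacent grid points. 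The key point is that the symbol depends only on the fractional part $\mu$ (the integer part is a clean shift of modulus one), so it suffices to bound $\sup_{\mu\in[0,1),\ \theta}\|\widehat{M}(\theta)^N\|$ uniformly in $N$. Power-boundedness of a family of $2\times 2$ matrices follows from the Kreiss matrix theorem, but here I expect to be able to argue more directly: show that each $\widehat{M}(\theta)$ has spectral radius $\le 1$, and at the modes where an eigenvalue touches the unit circle (which should be exactly $\theta=0$, where the cubic reproduces constants and linears exactly so $\widehat{M}(0)$ is the identity, or is similar to a Jordan-free matrix) the matrix is diagonalizable with a uniformly bounded eigenvector condition number.

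The cleaner route, and the one I would actually write up, exploits Rem.~\ref{rem:cubic_1d_minimize_l2}: the Hermite cubic projection $P$ minimizes $\int u_{xx}^2\,\mathrm{d}x$ among functions with the prescribed nodal data. Since translation is an isometry for $\int u_{xx}^2$, and $P(T_a u)$ has the same nodal data as $T_a u$ while minimizing the same functional, one gets $\int (Pu)_{xx}^2 \le \int u_{xx}^2$ reused along the orbit: $\int (M^N u)_{xx}^2 \le \int (M^{N-1}u)_{xx}^2 \le \cdots \le \int u_{xx}^2$, so the $L^2$ norm of the second derivative is monotonically non-increasing under the iteration — this is the semi-norm stability. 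To upgrade this to a genuine bound on $\phi$ and $\psi=\phi_x$ themselves, I would combine it with the exact conservation of lower-order information: $M$ reproduces constants, and for the periodic problem the mean of $\phi$ and the mean of $\phi_x$ (the latter being zero) are preserved, so a Poincaré-type inequality on the periodic domain controls $\|\phi - \bar\phi\|_{L^2}$ and $\|\phi_x\|_{L^2}$ by $\|(M^N\phi)_{xx}\|_{L^2}\le\|\phi_{xx}\|_{L^2}$, and standard inverse/trace inequalities on the Hermite space transfer this to the nodal values $(\phi^i,\phi_x^i)$.

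The main obstacle I anticipate is the last transfer step: the energy estimate naturally controls $\|\phi_{xx}\|_{L^2}$ of the \emph{interpolant}, but the quantities that must not blow up are the nodal data, and on a fixed grid with $h\to 0$ one must be careful that the inverse inequality constants (bounding nodal values by $L^2$ norms of derivatives) do not degrade — they are $h$-dependent, but since we also control $\|\phi\|_{L^2}$ via the $\theta=0$ conservation plus Poincaré, the combination should be $h$-uniform. If the energy route proves awkward to make fully rigorous for the nodal norm, I would fall back on the explicit $2\times 2$ symbol computation and the Kreiss/von Neumann analysis sketched above, which is elementary if tedious; the energy argument is preferable because it is short, transparent, and foreshadows the (open) multidimensional case mentioned in the remark after the consistency theorem.
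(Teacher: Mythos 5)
Your first route is precisely the paper's proof: write one step of the scheme in nodal form from \eqref{eq:Hermite_cubic_1D}--\eqref{eq:Hermite_cubic_1D_prime}, pass to Fourier space, and study the $2\times 2$ amplification matrix $G_\xi(\theta)$ as a function of the fraction $\xi$ of a cell traversed. The paper finds that at $\theta=0$ the matrix is upper triangular with eigenvalues $1$ and $1-6\xi(1-\xi)$ (so not the identity, but diagonalizable), and for $\theta\neq 0$ it can only verify \emph{numerically} that both eigenvalues lie strictly inside the unit circle; it explicitly concedes that ``a strict formal estimate on the eigenvalues of $G_{\xi}(\theta)$ remains to be done.'' Your sketch defers exactly the same step (``show that each $\widehat{M}(\theta)$ has spectral radius $\le 1$''), so on this route you would land where the paper lands, with the hard inequality still open. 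One small correction: $P$ does not commute with translation by a non-integer multiple of $h$ (the translated interpolant has breakpoints off the grid), but this is harmless since for constant $v$ every step is the same operator $M=P\circ T_{v\Delta t}$ anyway.

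Your energy route is genuinely different from the paper and its core inequality is correct: $T_a u$ lies in $H^2$, has the same nodal data as $P(T_a u)$, and by Rem.~\ref{rem:cubic_1d_minimize_l2} (applied cell by cell) the projection minimizes $\int u_{xx}^2$ over that data set, so $\|(Mu)_{xx}\|_{L^2}\le\|u_{xx}\|_{L^2}$ monotonically along the iteration. However, two points need repair. First, the mean of $\phi$ is \emph{not} preserved by $P$ (the Hermite interpolant does not reproduce cell averages); it drifts by $O\prn{h^2\norm{u_{xx}}_{L^2}}$ per step, which is still summable over the $O(1/h)$ steps in $[0,T]$, so the conclusion survives but the argument as written has a false step. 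Second, and more seriously, this route bounds the iterates by $\norm{u^0_{xx}}_{L^2}$, and an inverse inequality shows that for arbitrary nodal data this quantity is $O(h^{-2})$ times the natural data norm (e.g.\ grid-scale oscillations $\phi_j=(-1)^j\varepsilon$ have $\norm{u_{xx}}_{L^2}\sim\varepsilon h^{-3/2}$). So you obtain ``no blow-up for initial data interpolated from a fixed smooth function,'' which matches the informal definition of stability stated at the start of Sect.~\ref{subsec:stability}, but you do \emph{not} obtain uniform power-boundedness of $M$ in the data norm, which is the von Neumann/Lax--Richtmyer notion invoked in Sect.~\ref{subsec:convergence} to conclude convergence. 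In short: your energy argument proves a clean, fully rigorous statement that is weaker than what the Lax equivalence argument needs, while your Fourier argument targets the right statement but, like the paper, leaves its key eigenvalue bound unproved.
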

\begin{proof}
Consider the 1D advection equation $\phi_t+v\phi_x = 0$ with $v$ constant.
We choose $|v|\Delta t\le h$, and WLOG assume $v<0$.
Since the scheme is gradient-augmented, the state vector contains both function values
and derivatives. Let $\phi_j^n$ denote the approximate solution, and $\psi_j^n$ denote
the approximate derivative, both at the grid point $x = jh$ and
time $t = n\Delta t$.
One step of the generalized CIR scheme is obtained by applying \eqref{eq:Hermite_cubic_1D}
and \eqref{eq:Hermite_cubic_1D_prime}
\begin{equation}
\begin{split}
\phi_j^{n+1} &=
\phantom{\tfrac{1}{h}}\prn{\phi_j^n\, f(\xi)\, +\phi_{j+1}^n\, f(1-\xi)}\,
+h\prn{\psi_j^n\, g(\xi)\, -\psi_{j+1}^n\, g(1-\xi)}\;, \\
\psi_j^{n+1} &=
\tfrac{1}{h}\prn{\phi_j^n f'(\xi)-\phi_{j+1}^n f'(1-\xi)}
+\phantom{h}\prn{\psi_j^n g'(\xi)+\psi_{j+1}^n g'(1-\xi)}\;,
\end{split}
\label{eq:scheme_1D_constant}
\end{equation}
where $\xi = \tfrac{|v|\Delta t}{h} \in (0,1]$, while $f$ and $g$ are the basis
functions \eqref{eq:1-cubic_basis_functions}. Note that, since the velocity field is
constant, the gradient $\psi$ is also constant along the characteristics.
In the special case $\xi = 1$, the scheme becomes
\begin{equation*}
\phi_j^{n+1} = \phi_{j+1}^n
\quad\text{and}\quad
\psi_j^{n+1} = \psi_{j+1}^n\;,
\end{equation*}
i.e.~it is decoupled and exact, and thus stable.
Hence, in the following, we restrict to the case $0<\xi<1$.

Since we are investigating a linear partial differential equation with constant
coefficients, we can apply von Neumann stability analysis, and analyze the
stability of \eqref{eq:scheme_1D_constant} in Fourier space.
We consider the Fourier transform of the equations, and rewrite them in terms of the
Fourier coefficients $a_k^n$ and $b_k^n$, which are related to $\phi$ and $\psi$ by
\begin{equation}
\phi_j^n = \sum_k a_k^n e^{i k x_j}
\quad\text{and}\quad
\psi_j^n = \sum_k b_k^n e^{i k x_j}\;.
\label{eq:stability_Fourier_series}
\end{equation}
Substituting \eqref{eq:stability_Fourier_series} into \eqref{eq:scheme_1D_constant}
yields an update rule for the Fourier coefficients that is given by
a $2\times 2$ growth factor matrix as
\begin{equation*}
\begin{pmatrix} a_k^{n+1} \\ b_k^{n+1} \end{pmatrix}
= \underbrace{\begin{pmatrix}
f_0+e^{i\theta}f_1 & h\prn{g_0+e^{i\theta}g_1} \\
\tfrac{1}{h}\prn{f_0'+e^{i\theta}f_1'} & g_0'+e^{i\theta}g_1'
\end{pmatrix}}_{=\tilde{G}_{\xi,h}(\theta)}
\cdot \begin{pmatrix} a_k^n \\ b_k^n \end{pmatrix}\;,
\end{equation*}
where $\theta = kh$, we have used the notation
$f_0 = f(\xi)$, $f_1 = f(1-\xi)$, $g_0 = g(\xi)$, $g_1 = -g(1-\xi)$,
and the primes are derivatives with respect to $\xi$.

As we show below, the scheme is stable because:
\begin{enumerate}[(a)]
\item
For $\theta = 0$, one eigenvalue of
$\tilde{G}_{\xi,h}(\theta)$ is $\lambda_1 = 1$, while the other
one satisfies $ -1 < \lambda_2 < 1$.
\item
For $\theta \neq 0$, and $\abs{\theta} \leq \pi$, both
eigenvalues of $\tilde{G}_{\xi,h}(\theta)$ are strictly inside the
complex unit circle.
\end{enumerate}

In the case $\theta = 0$, the matrix $\tilde{G}_{\xi,h}(\theta)$ has the form
\begin{equation*}
\tilde{G}_{\xi,h}(\theta)
= \begin{pmatrix} 1 & g_0+g_1 \\ 0 & 1-6\xi(1-\xi) \end{pmatrix}\;.
\end{equation*}
Hence its eigenvalues are $\lambda_1 = 1$ and $\lambda_2 = 1-6\xi(1-\xi)$.
Clearly, $-1<\lambda_2<1$, provided $0<\xi<1$.

In the case $\theta\neq 0$, we can consider the matrix
\begin{equation*}
G_{\xi}(\theta)
= U^{-1}\cdot\tilde{G}_{\xi,h}(\theta)\cdot U
\quad\text{where}\quad
U = \begin{pmatrix} h & 0 \\ 0 & 1 \end{pmatrix}\;.
\end{equation*}
By construction, this new matrix, given by
\begin{equation*}
G_{\xi}(\theta)
= \begin{pmatrix}
f_0+e^{i\theta}f_1 & g_0+e^{i\theta}g_1 \\
f_0'+e^{i\theta}f_1' & g_0'+e^{i\theta}g_1'
\end{pmatrix}\;,
\end{equation*}
has the same eigenvalues as $\tilde{G}_{\xi,h}(\theta)$,
but it is simpler, since it does not depend on the mesh size $h$.

\begin{figure}
\centering
\begin{minipage}[t]{.24\textwidth}
\centering
\includegraphics[width=0.99\textwidth]{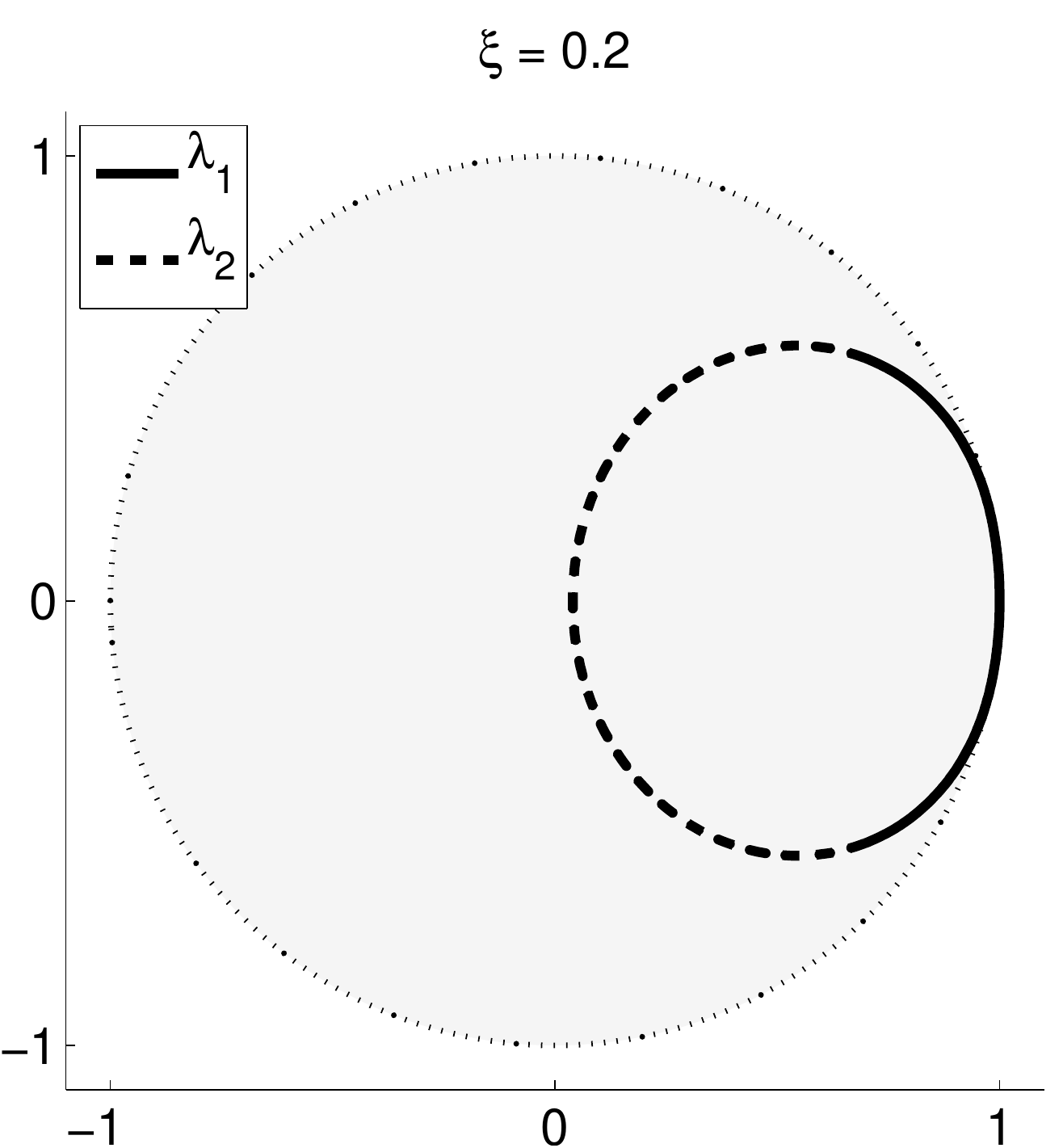}
\end{minipage}
\hfill
\begin{minipage}[t]{.24\textwidth}
\centering
\includegraphics[width=0.99\textwidth]{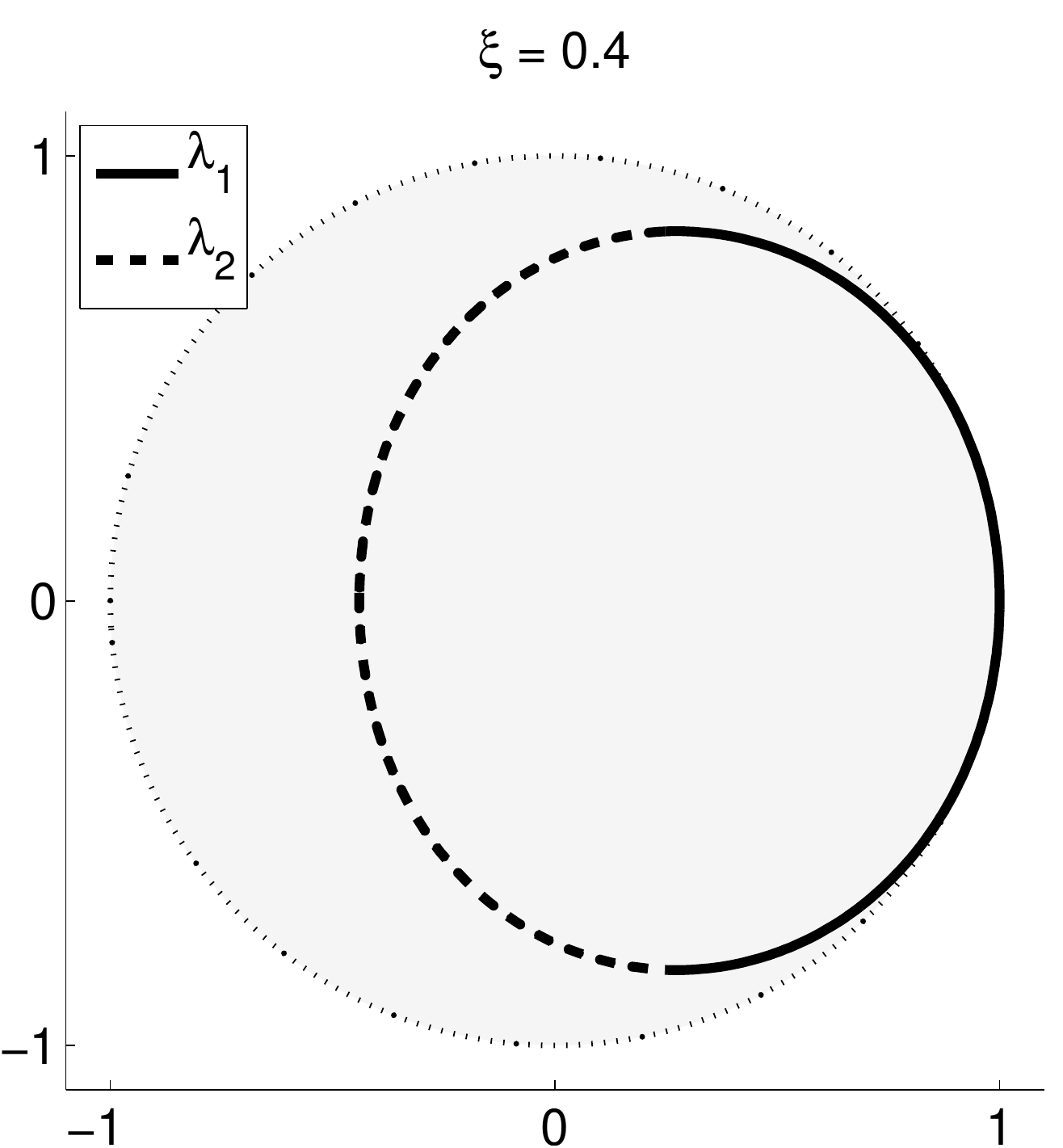}
\end{minipage}
\hfill
\begin{minipage}[t]{.24\textwidth}
\centering
\includegraphics[width=0.99\textwidth]{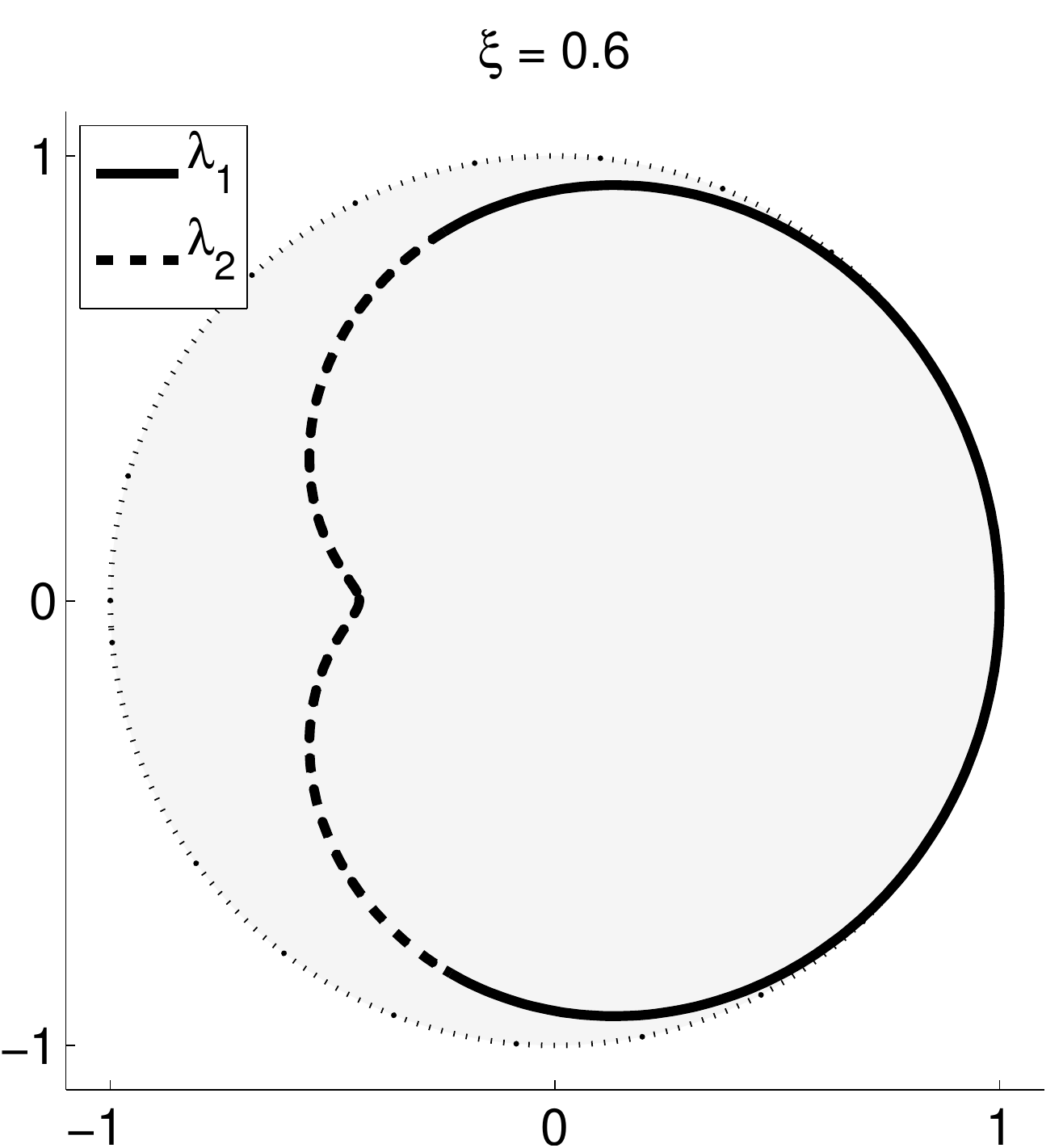}
\end{minipage}
\hfill
\begin{minipage}[t]{.24\textwidth}
\centering
\includegraphics[width=0.99\textwidth]{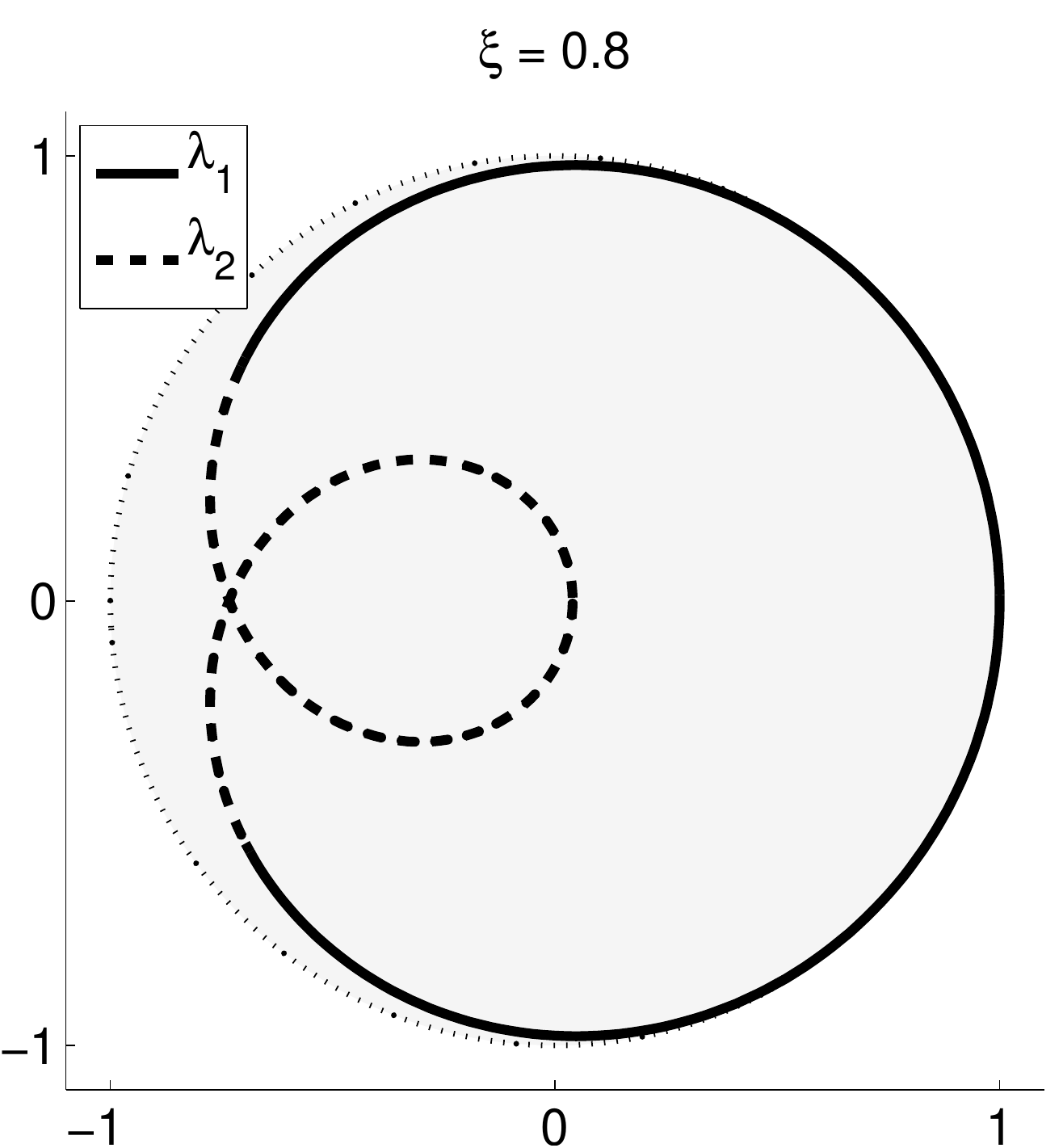}
\end{minipage}
\caption{Eigenvalues of the growth factor matrix $G_{\xi}(\theta)$ of the
cubic CIR scheme, for $\xi\in\{0.2,0.4,0.6,0.8\}$}
\label{fig:stability_growthfactor}
\end{figure}

At the current state, a strict formal estimate on the eigenvalues of $G_{\xi}(\theta)$
remains to be done. However, the eigenvalues can be evaluated numerically and plotted.
Fig.~\ref{fig:stability_growthfactor} shows the eigenvalue graphs
(as functions of $\theta\in [-\pi,\pi]$) of the growth factor matrix for
$\xi\in\{0.2,0.4,0.6,0.8\}$. Note that in each figure the curve consists of both
eigenvalues ($\lambda_1$ solid part, $\lambda_2$ dashed part).
The four figures indicate that for $\theta\neq 0$, both eigenvalues are always inside
the unit circle. This observation hold true for all values of $0<\xi<1$ that we have
tested. Thus the gradient-augmented $p$-cubic CIR method is stable.
\end{proof}

\begin{rem}
It is very plausible that the presented stability result carries over to the case of
variable coefficients. The reason is that the most usual way in which instabilities
arise is in the short wave limit, i.e.~oscillations on the scale of the grid
resolution $h$. Exactly those waves are covered by the above von Neumann analysis,
since any smooth velocity field is locally constant if the chosen resolution $h$ is
sufficiently small.
A general stability proof in higher space dimensions and for variable coefficients
is the subject of current research. In all numerical tests presented in
Sect.~\ref{sec:numerical_results}, and others, the method is observed to be stable.
\end{rem}

\begin{rem}
The prior considerations show that the scheme is stable, so that grid scale oscillations
are not amplified. In addition, it is interesting to see what happens with the
components that are well resolved by the grid, i.e. both $h$ and $\theta$ are small.
In this case, for $0<\xi<1$ fixed, as $h\to 0$ we can write
\begin{equation*}
\tilde{G}_{\xi,h}(\theta)
= \begin{pmatrix}
1 & 0 \\
i k 6\xi(1-\xi) & 1-6\xi(1-\xi)
\end{pmatrix}+O(h)\;.
\end{equation*}
This has the eigenvalue $\lambda_1 = 1+O(h)$, with corresponding eigenvector
$\vec{e}_1 = \prn{1,i k}^T+O(h)$, and the eigenvalue
$\lambda_2 = 1-6\xi(1-\xi)+O(h)$, which is fully inside the unit circle.

The higher order corrections to $\lambda_1$ give the advection at velocity $v$ up to
some error (as we know from consistency, see Sect.~\ref{subsec:consistency}).
Furthermore, since $\lambda_2$ is inside the unit circle, no matter how we start,
the solution will always be driven into a configuration where $\psi = ik\phi$.
This happens for all the ``resolved'' values of $k$, while the others decay,
as shown in the stability considerations.

Hence, the presented method drives the solution \emph{in the spectral sense} into
a configuration in which $\psi$ is the derivative of the $\phi$.
In other words, at least in this case of constant $v$, we have a very strong form
of coherence being enforced by the method. Our conjecture is that this
property carries over to the general case.
\end{rem}

%---------------------------------------------------------------------------------------------
\subsection{Convergence}
\label{subsec:convergence}
%---------------------------------------------------------------------------------------------
With the $p$-cubic projection \eqref{eq:p-cubic_interpolant}, the presented numerical
scheme is linear.
Therefore, due to the Lax equivalence theorem \cite{LaxRichtmyer1956}, consistency
(shown in Sect.~\ref{subsec:consistency}), and stability
(investigated in Sect.~\ref{subsec:stability})) imply convergence, i.e.~the numerical
approximation converges to the true solution as $\Delta t \propto h \to 0$,
provided an appropriate CFL condition is enforced
(i.e.~$C\Delta t < h$, where the constant $C$ depends on the velocity field).

As shown in Sect.~\ref{subsec:consistency}, a fixed number of time steps yield
a fourth order accurate approximation to $\phi$, and the third order accurate
approximation to $\vec{\psi}$.
In order to compute the solution over a fixed time interval $[0,T]$,
a total of $\frac{T}{\Delta t}$ time steps are required. Hence the global error can
only be guaranteed to be third order accurate in $\phi$ and second order accurate
in $\vec{\psi}$.
The numerical results in Sect.~\ref{sec:numerical_results} indicate that this
drop by one order is in fact what happens.

%=============================================================================================
\section{Numerical Results}
\label{sec:numerical_results}
%=============================================================================================
In this section we test the accuracy and performance of the gradient-augmented
level set method, as presented in the prior sections. We use the superconsistent
scheme given in Ex.~\ref{ex:superconsistent_ShuOsher}, which is based on the
Shu-Osher RK3 method. Note that in all presented examples, the use of
Heun's method to update gradients (see Sect.~\ref{sec:scheme_CIR}) leads to
results that differ by less than $0.1\%$.

The local and global accuracy of the generalized CIR method, as theoretically
predicted in Sect.~\ref{sec:numerical_analysis}, as well as the accuracy of the
approximation of the curvature from the $p$-cubic interpolant,
outlined in Sect.~\ref{subsec:derivative_quantities}, are investigated
in Sect.~\ref{subsec:numerics_accuracy}.
In addition, the performance of the gradient-augmented level set method is compared
with a classical high order level set approach. The results of various benchmark
tests are presented in Sect.~\ref{subsec:numerics_performance}.

%---------------------------------------------------------------------------------------------
\subsection{Accuracy of the Advection and Curvature Approximations}
\label{subsec:numerics_accuracy}
%---------------------------------------------------------------------------------------------
We numerically investigate the accuracy of the gradient-augmented CIR method advection
scheme, as presented in Sect.~\ref{sec:scheme_CIR} and analyzed in
Sect.~\ref{sec:numerical_analysis}, as well as the accuracy of curvature when recovered
from the $p$-cubic interpolant, as described in Sect.~\ref{subsec:derivative_quantities}.
The order of convergence of these approximations is based on a Taylor expansion, and
thus requires the considered functions to be smooth. In practice, level set
functions are often chosen as signed distance functions \eqref{eq:distance_function},
which possess discontinuous
gradients. The rationale is that the signed distance property generally yields more
benefits than the jumps in the gradient cause disadvantages. In fact, if the
represented structures are sufficiently large, the drop in accuracy of numerical
approximations is typically located only near the discontinuous gradients,
while near the zero contour, the full accuracy for smooth functions is observed.
In order to investigate the order of accuracy of the presented approaches rigorously,
here in Sect.~\ref{subsec:numerics_accuracy}, we consider the evolution and
differentiation of infinitely often differentiable functions.

In the following, we measure the error of the numerical scheme.
At some time, let the true solution be denoted by $\phi$, and the numerical
approximation at a grid point $\vec{x}_j$ be denoted by $\hat{\phi}_j$ for function
values, and $\hat{\vec{\psi}}_j$ for gradients.
The numerical error of the gradient-augmented scheme is computed in the
$L^{\infty}$-norm, both for function values and for gradients
\begin{equation}
\begin{split}
e_{\text{max}}(\phi) &= \max_j\abs{\hat{\phi}_j-\phi(\vec{x}_j)}\;, \\
e_{\text{max}}(\vec{\psi})
&= \max_j\norm{\hat{\vec{\psi}}_j-\nabla\phi(\vec{x}_j)}_{\text{max}}\;,
\label{eq:maximum_norm}
\end{split}
\end{equation}
where $\norm{\vec{v}}_{\text{max}} = \max_i\abs{v_i}$.

%- - - - - - - - - - - - - - - - - - - - - - - - - - - - - - - - - - - - - - - - - - - - - - -
\subsubsection{Local Truncation Error in a Pseudo-1D Advection}
%- - - - - - - - - - - - - - - - - - - - - - - - - - - - - - - - - - - - - - - - - - - - - - -
In the 2D domain $[0,1]\times [0,1]$, we consider the advection \eqref{eq:advection_phi}
of the smooth initial conditions
\begin{equation*}
\mathring{\phi}(x,y) = \exp\prn{-\abs{\vec{x}-\vec{x}_0}^2}-\exp\prn{-r_0^2}\;,
\end{equation*}
with $\vec{x} = (x,y)$, $\vec{x}_0 = (0.5,0.5)$, and $r_0 = 0.15$, under the velocity
field
\begin{equation}
\vec{v}(x,y)=\tfrac{1}{\sqrt{2+\pi}}
\exp\prn{\tfrac{\sqrt{2}x+\sqrt{\pi}y}{\sqrt{2+\pi}}}
\begin{pmatrix} \sqrt{2} \\ \sqrt{\pi} \end{pmatrix}\;.
\label{eq:velocity_1}
\end{equation}
This velocity field represents a 1D flow in a constant direction that is not aligned with
the 2D grid. Hence, we call it a ``pseudo-1D flow''. The velocity field possesses a
nonzero divergence, and a non-constant deformation $\nabla\vec{v}$. It is selected
because the advection under \eqref{eq:velocity_1} possesses a simple analytical solution.
At the inflow edges $\vec{v}\cdot\vec{n}<0$, homogeneous Neumann boundary conditions
are prescribed. However, we evaluate the error only on a part of the domain that is
not influenced by the boundary conditions.
We consider the numerical solution with the gradient-augmented CIR method, presented
in Sect.~\ref{sec:scheme_CIR}.

\begin{figure}
\centering
\begin{minipage}[t]{.49\textwidth}
\centering
\includegraphics[width=0.99\textwidth]{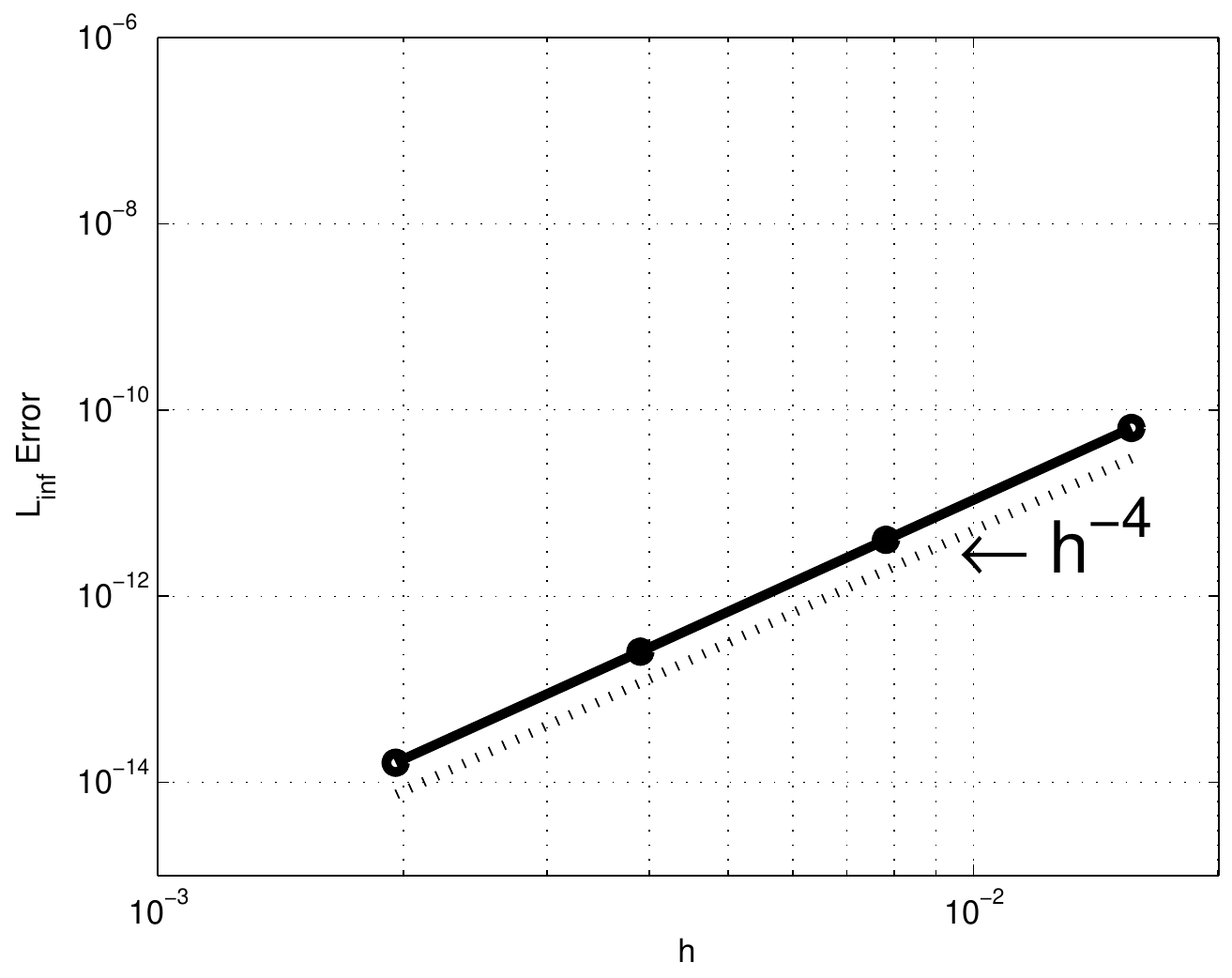}
\caption{Local truncation error for $\phi$ -- Pseudo 1D advection}
\label{fig:LTE_phi}
\end{minipage}
\hfill
\begin{minipage}[t]{.49\textwidth}
\centering
\includegraphics[width=0.99\textwidth]{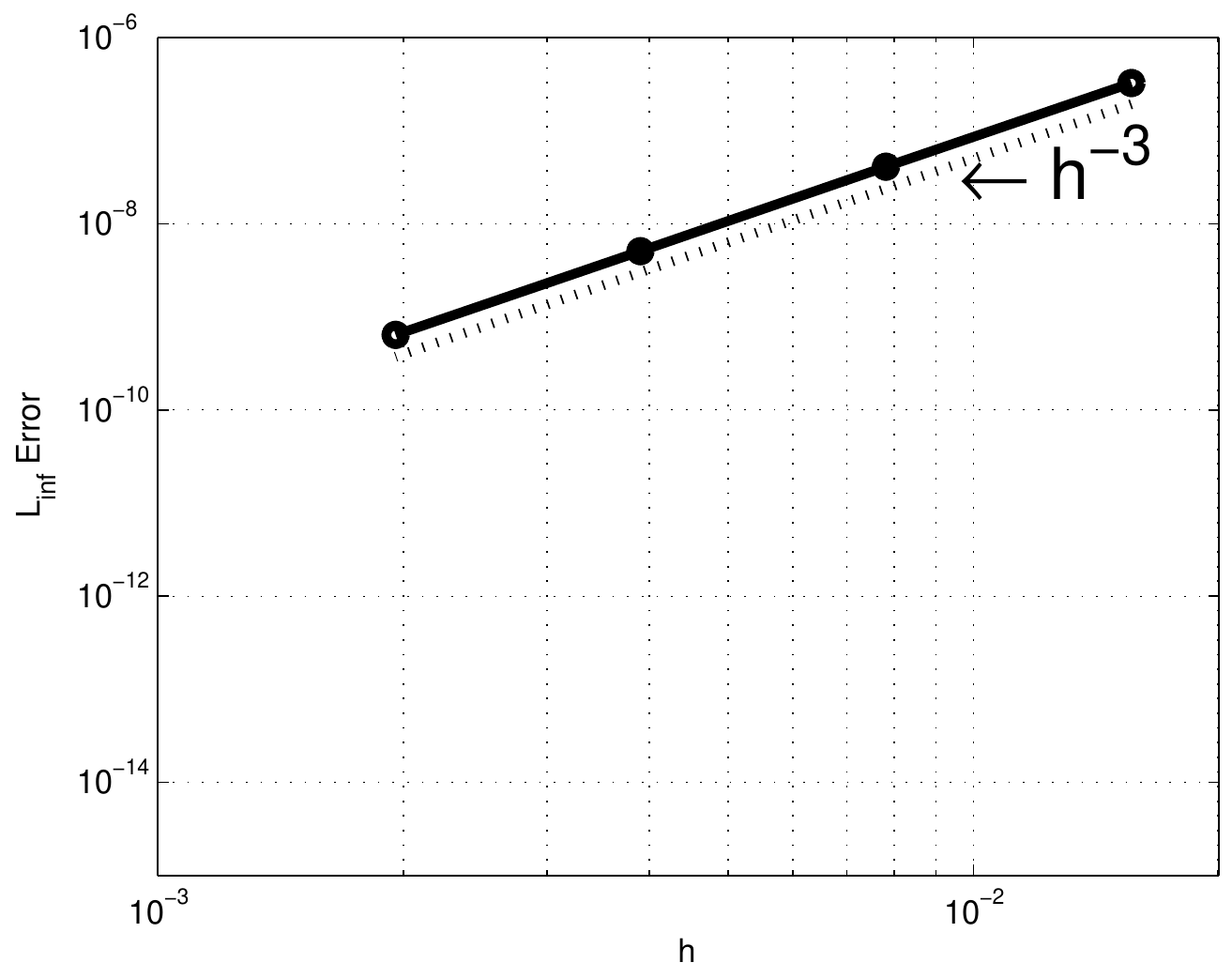}
\caption{Local truncation error for $\nabla\phi$ -- Pseudo 1D advection}
\label{fig:LTE_psi}
\end{minipage}
\end{figure}

The test aims at estimating the local truncation error.
For a sequence of regular grids, with spacing $h = \Delta x = \Delta y$ and time step
$\Delta t = \frac{1}{2}h$, we perform a fixed number ($N=16$) of steps with the
advection scheme.

The convergence of the local truncation error \eqref{eq:maximum_norm} is presented in
Figs.~\ref{fig:LTE_phi} and \ref{fig:LTE_psi}. We observe fourth order accuracy for
the function value $\phi$, and third order accuracy for the gradients $\nabla\phi$.
These results are in agreement with the local truncation error estimates derived
in Sect.~\ref{subsec:consistency}.

%- - - - - - - - - - - - - - - - - - - - - - - - - - - - - - - - - - - - - - - - - - - - - - -
\subsubsection{Global Truncation Error in a 2D Deformation Field}
\label{subsubsec:vortex_in_a_box}
%- - - - - - - - - - - - - - - - - - - - - - - - - - - - - - - - - - - - - - - - - - - - - - -
In this section we present the results obtained for a time-modulated, two-dimensional
deformation field. This test is often times referred to as ``vortex in a box'' flow,
following LeVeque \cite{LeVeque1996}, and Bell, Colella, and Glaz \cite{BellColellaGlaz1989}.
On the domain $[0,1]\times [0,1]$, we consider the velocity field
\begin{equation}
\vec{v}(x,y,t) = \nabla^{\bot}\varphi(x,y,t) = \prn{-\pd{\varphi}{y},\pd{\varphi}{x}}\;,
\label{eq:swirl_2d_velocity_field}
\end{equation}
given by the stream function
\begin{equation*}
\varphi(x,y,t) = \tfrac{1}{\pi}\cos\prn{\tfrac{\pi t}{T}}
\sin\prn{\pi x}^{2}\sin\prn{\pi y}^{2}\;.
\end{equation*}
This generates a time-dependent incompressible velocity field with a deformation
matrix $\nabla\vec{v}$ that changes both in space and in time. Thus this flow field
provides a realistic test for our method. Since the velocity field is zero at the
domain boundaries, no boundary conditions need to be prescribed.
Notice that the function $\cos\prn{\tfrac{\pi t}{T}}$ is an odd function with respect
to $t = T/2$, thus the advection under \eqref{eq:swirl_2d_velocity_field} is
anti-symmetric around $t = T/2$. In particular, $\phi(\vec{x},T) = \phi(\vec{x},0)$,
i.e.~at time $t = T$, the flow has brought back the level set function to its initial
values. Thus, we can check the error behavior of the numerical method at $t = T$.

We consider the advection \eqref{eq:advection_phi} of the smooth initial conditions
\begin{equation*}
\mathring{\phi} = \exp\prn{-\abs{\vec{x}-\vec{x}_0}^2}-\exp\prn{-r_0^2}\;,
\end{equation*}
with $\vec{x} = (x,y)$, $\vec{x}_0 = (0.5,0.75)$, and $r_0 = 0.15$, under the given
velocity field, and apply the gradient-augmented level set method.

Here, we use $\Delta t = h = \Delta x = \Delta y$, and $T = 2$.
We compute the $L^\infty$ norm of the difference between the computed solution
at $t = T$ and the initial conditions. This provides the global truncation error.
In Figs.~\ref{fig:GTE_swirl_analytical_phi}
and \ref{fig:GTE_swirl_analytical_psi}, the convergence of the error in the
$L^{\infty}$-norm for both $\phi$ and $\nabla\phi$ is shown. We observe third order
accuracy for the function value $\phi$, and second order accuracy for the gradients
$\nabla\phi$, as predicted in Sect.~\ref{sec:numerical_analysis}.

\begin{figure}
\centering
\begin{minipage}[t]{.49\textwidth}
\centering
\includegraphics[width=0.99\textwidth]{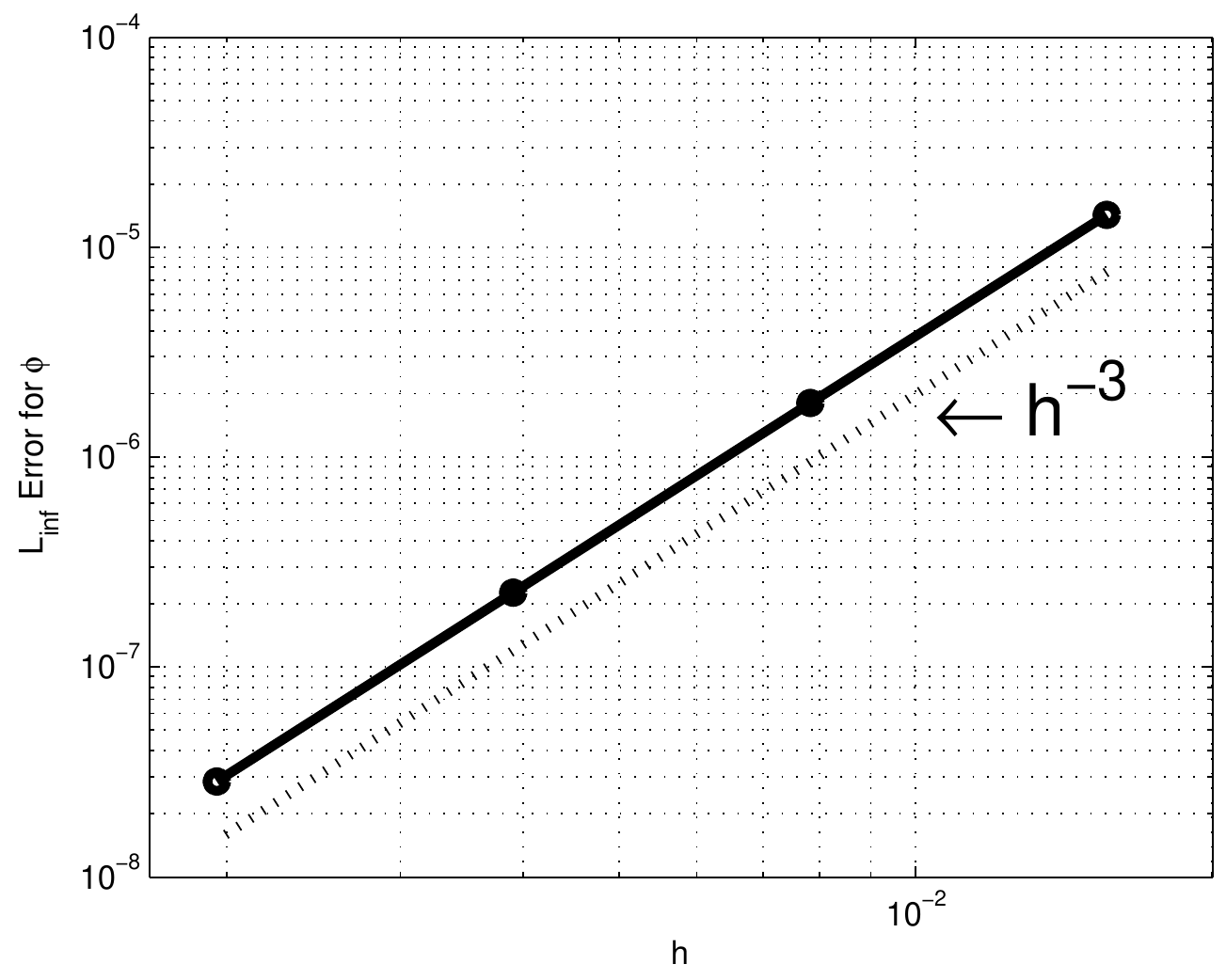}
\caption{Global truncation error for $\phi$ -- 2D deformation field}
\label{fig:GTE_swirl_analytical_phi}
\end{minipage}
\hfill
\begin{minipage}[t]{.49\textwidth}
\centering
\includegraphics[width=0.99\textwidth]{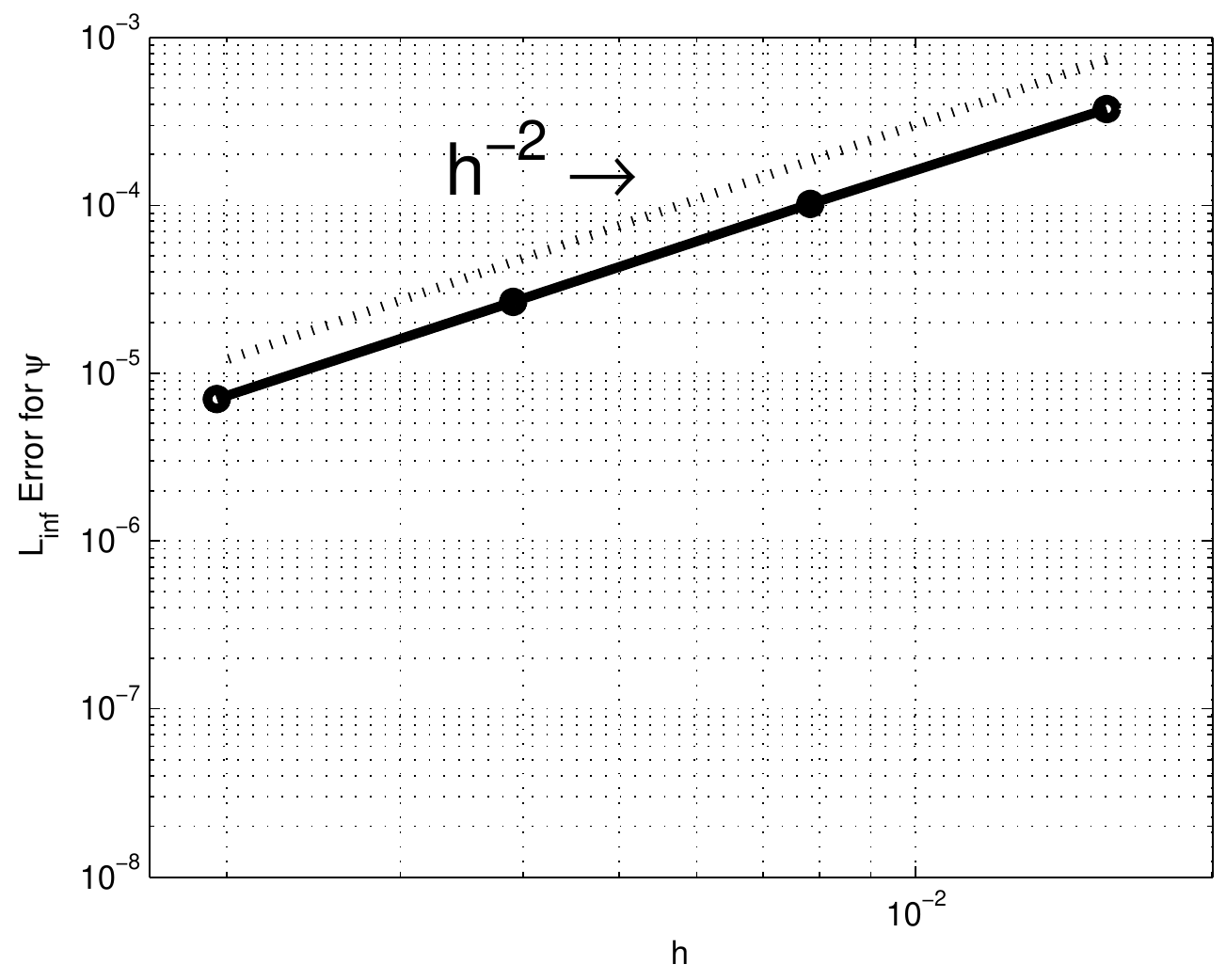}
\caption{Global truncation error for $\nabla\phi$ -- 2D deformation field}
\label{fig:GTE_swirl_analytical_psi}
\end{minipage}
\end{figure}

%- - - - - - - - - - - - - - - - - - - - - - - - - - - - - - - - - - - - - - - - - - - - - - -
\subsubsection{Accuracy of Curvature}
\label{subsubsec:numerics_curvature}
%- - - - - - - - - - - - - - - - - - - - - - - - - - - - - - - - - - - - - - - - - - - - - - -
We numerically investigate the accuracy of the curvature approximation, when recovering
it by differentiating the $p$-cubic interpolant.
On the domain $[0,1]\times [0,1]$, we consider the function
\begin{equation*}
\phi(x,y) = \prn{(x-2)(y-x)}^3\;.
\end{equation*}
This function possesses enough non-zero derivatives (and cross derivatives) to provide
a non-trivial test. Furthermore, it has no curvature singularities inside the
considered domain.

For a given sequence of grids with various grid spacing $h$, we evaluate
both $\phi$ and $\nabla\phi$ on the grid points. As described in
Sect.~\ref{subsec:derivative_quantities}, an approximation to the curvature is obtained
from this data, using equation \eqref{eq:curvature}, everywhere inside the domain.
Here, the function values and all required derivatives are evaluated analytically from
the $p$-cubic interpolation (which involves the reconstruction of the cross derivatives
from $\nabla\phi$, as presented in Sect.~\ref{subsec:Hermite_interpolant}).
We obtain an estimate of the accuracy of the curvature calculation over the whole domain
(not just at grid points), by computing the $L^{\infty}$-error on a fixed grid with a
much finer grid spacing.

\begin{figure}
\centering
\begin{minipage}[t]{.49\textwidth}
\centering
\includegraphics[width=0.99\textwidth]{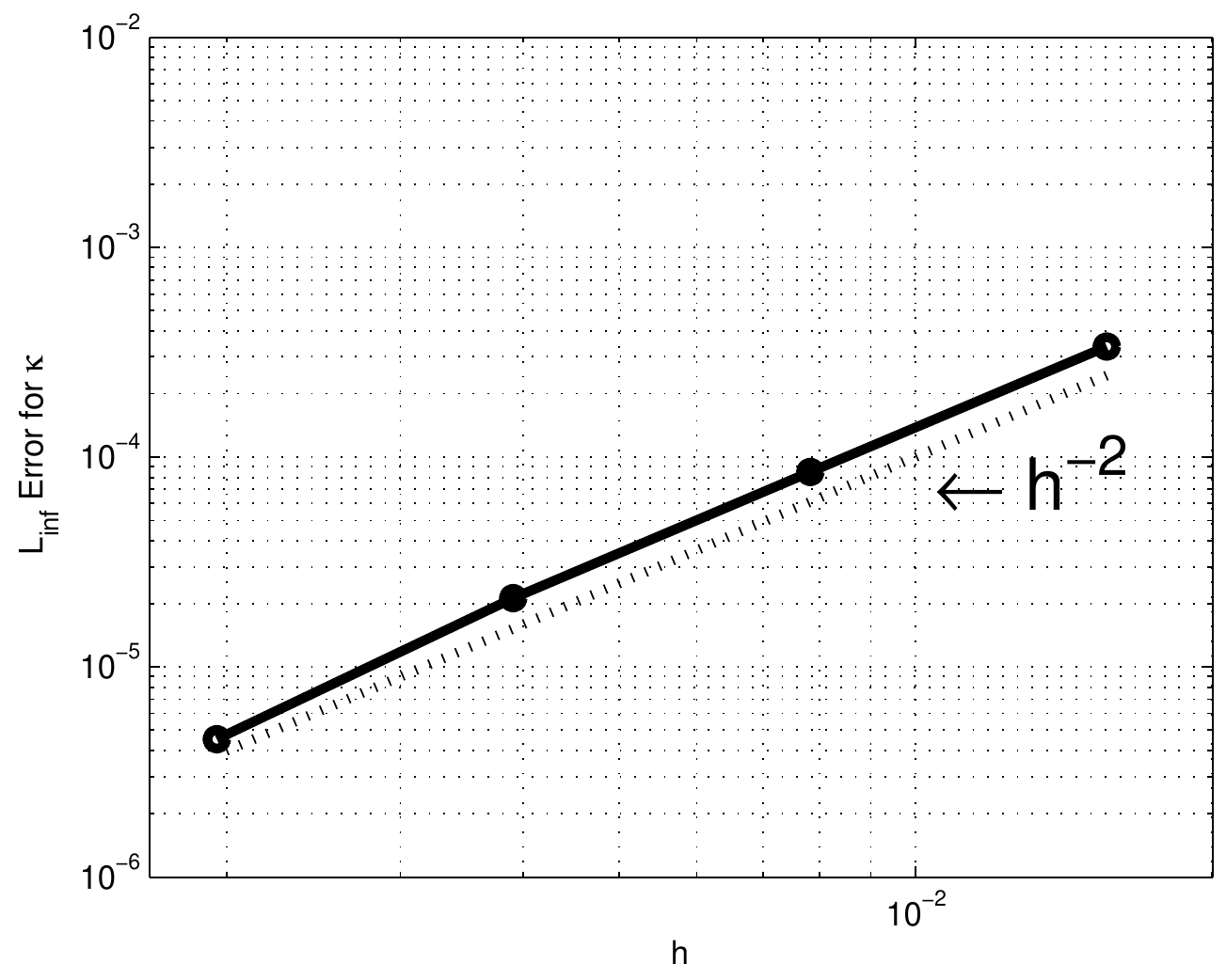}
\caption{Convergence plot for curvature $\kappa(x,y)$,
recovered from the Hermite bi-cubic}
\label{fig:curvature_no_advection}
\end{minipage}
\end{figure}

Convergence results are presented in Fig.~\ref{fig:curvature_no_advection}.
We observe that the curvature is obtained with second order accuracy everywhere in the
domain, which is in agreement with the theoretical considerations in
Sect.~\ref{subsec:derivative_quantities}.

%---------------------------------------------------------------------------------------------
\subsection{Performance of the Gradient-Augmented Level Set Method}
\label{subsec:numerics_performance}
%---------------------------------------------------------------------------------------------
In this section, we compare the performance of the gradient-augmented level set method
with that of a classical level set approach. For all test cases, we represent the initial
surface by a signed distance function \eqref{eq:distance_function}. For the classical
level set method, the advection equation \eqref{eq:advection_phi} is solved using
a fifth order WENO scheme \cite{LiuOsherChan1994} for the spacial approximation,
and the Shu-Osher scheme (a three stage, third order accurate, strongly stability
preserving Runge-Kutta method) \cite{ShuOsher1988} for the time step. In addition, in
each time step, the reinitialization equation \eqref{eq:reinitialization} is solved to
preserve the signed distance property \eqref{eq:distance_function} approximately.
We empirically find that for the presented test cases, the classical level approach
yields best results, when after each advection step of size $\Delta t$, two
reinitialization steps, each of size $0.75\,h$ are performed.
The gradient-augmented level set approach is applied as described in
Sect.~\ref{sec:scheme_CIR}. Here, no reinitialization is applied. Hence, for the
2D and 3D deformation field tests, the velocity field yields significant deformations
of the level set function away from a signed distance function.

%- - - - - - - - - - - - - - - - - - - - - - - - - - - - - - - - - - - - - - - - - - - - - - -
\subsubsection{Zalesak's Circle}
%- - - - - - - - - - - - - - - - - - - - - - - - - - - - - - - - - - - - - - - - - - - - - - -
We consider the rigid body rotation of Zalesak's circle \cite{Zalesak1979}
in a constant vorticity velocity field.
On the domain $[0,100]\times [0,100]$, let the initial data describe a slotted circle,
centered at $(50,75)$ with a radius of $15$, a slot width of $5$,
and a slot length of $25$. The constant vorticity velocity field is given by
\begin{align*}
u(x,y) &= \tfrac{\pi}{314}(50-y)\;, \\
v(x,y) &= \tfrac{\pi}{314}(x-50)\;.
\end{align*}
The disk completes one revolution in a time interval $0 \leq t \leq 628$.
At the inflow edges $\vec{v}\cdot\vec{n}<0$, homogeneous Neumann boundary conditions
are prescribed.

On a $64\times 64$ grid, we compare the gradient-augmented CIR scheme with the
classical WENO advection scheme with reinitialization.
The time step is $\Delta t = 1$, hence one revolution equals 628 time steps.
Fig.~\ref{fig:zalesak_circle} shows the evolution of the solution in time.
The top figure shows the initial conditions, and the velocity field.
The middle figure shows the obtained surface after one revolution,
and the bottom figure shows the results after four revolutions.
One can observe that the gradient-augmented level set method recovers the shape
significantly more accurately than the classical WENO scheme. In particular, after
four revolutions, with the classical approach the notch in the circle has vanished.
In contrast, with gradient, it has shrunk, yet it is still present.

\begin{figure}
\centering
\begin{minipage}[t]{.48\textwidth}
\centering
\includegraphics[height=0.92\textwidth]{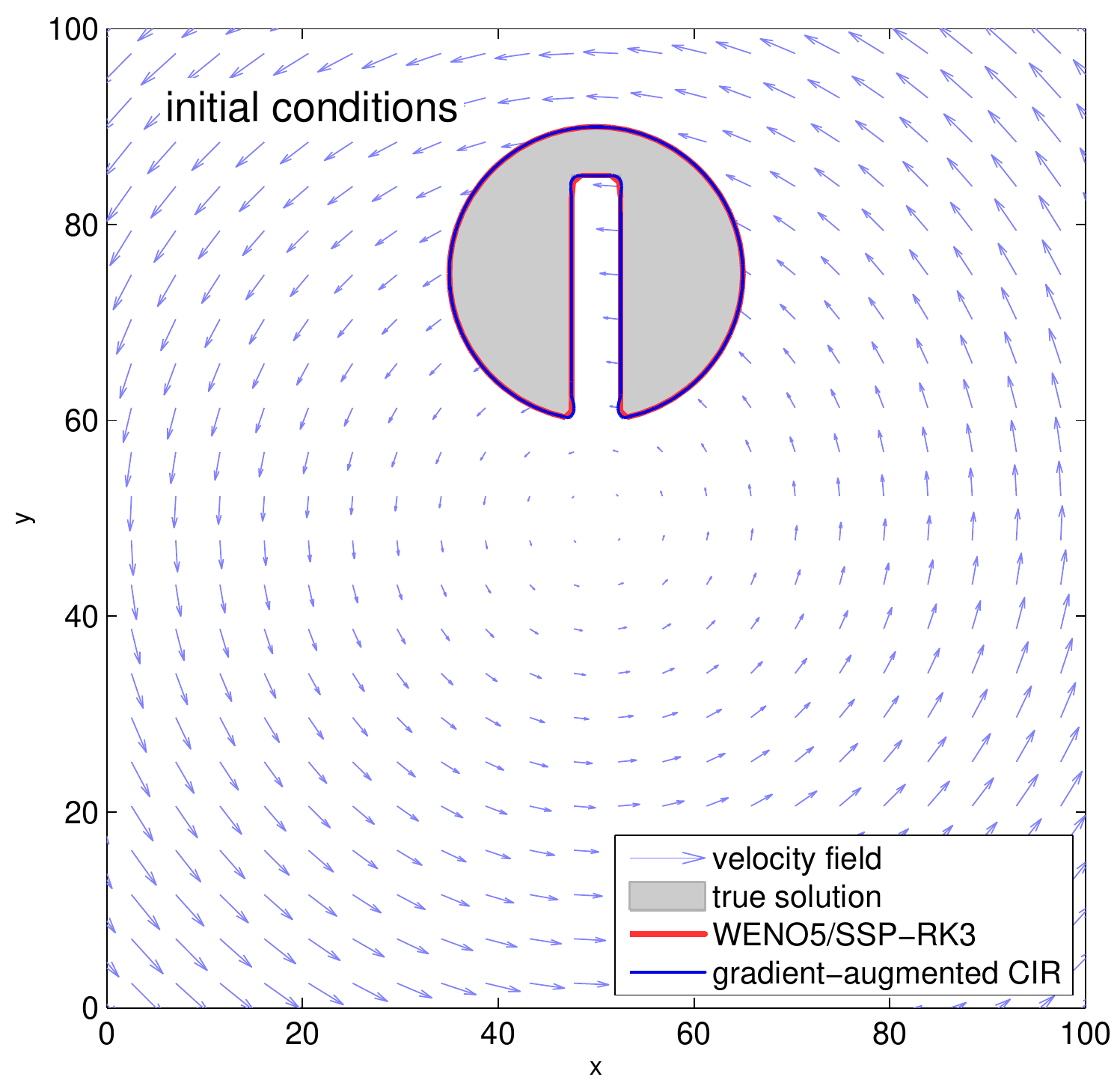} \\[.1em]
\includegraphics[height=0.92\textwidth]{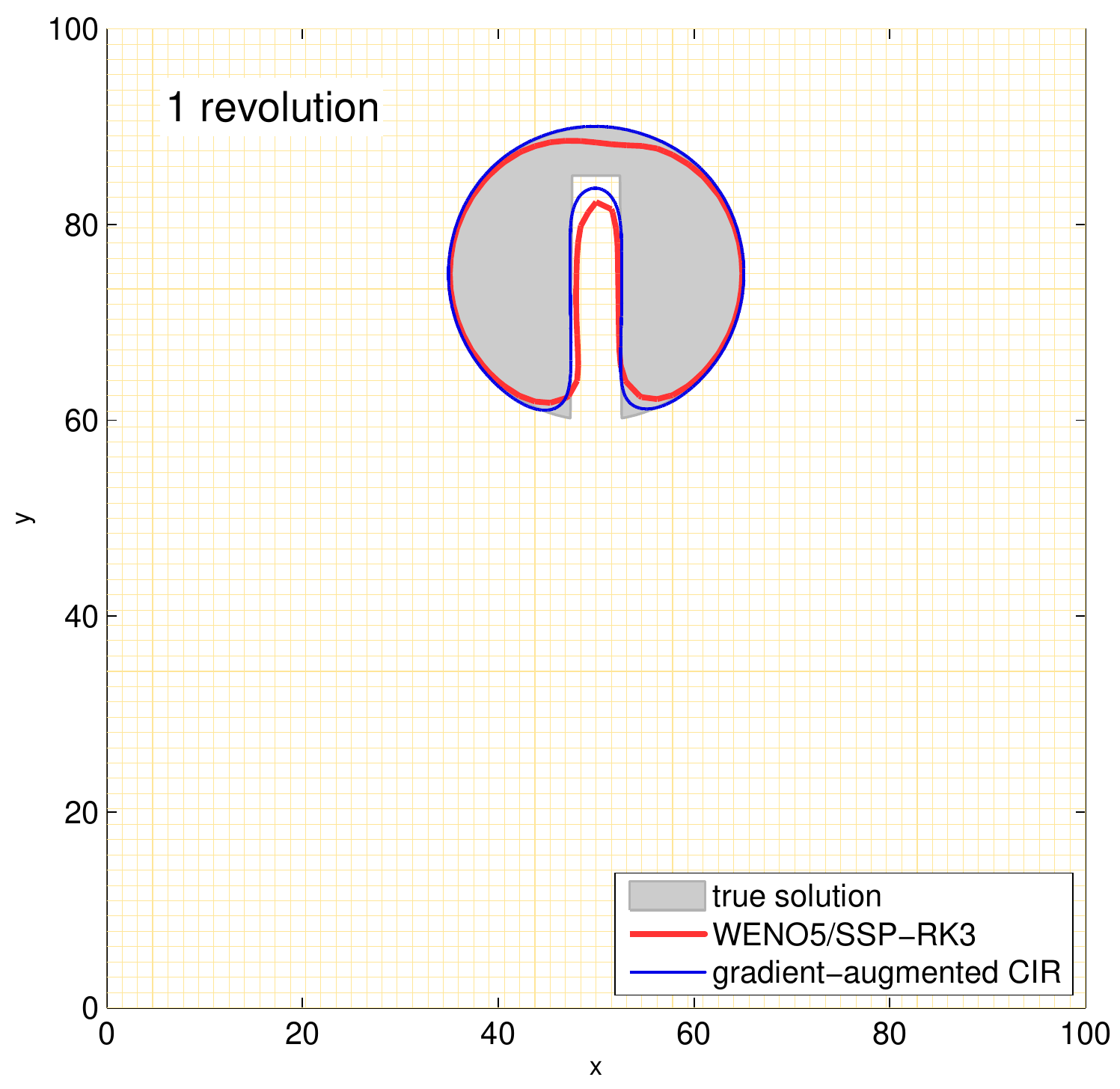} \\[.1em]
\includegraphics[height=0.92\textwidth]{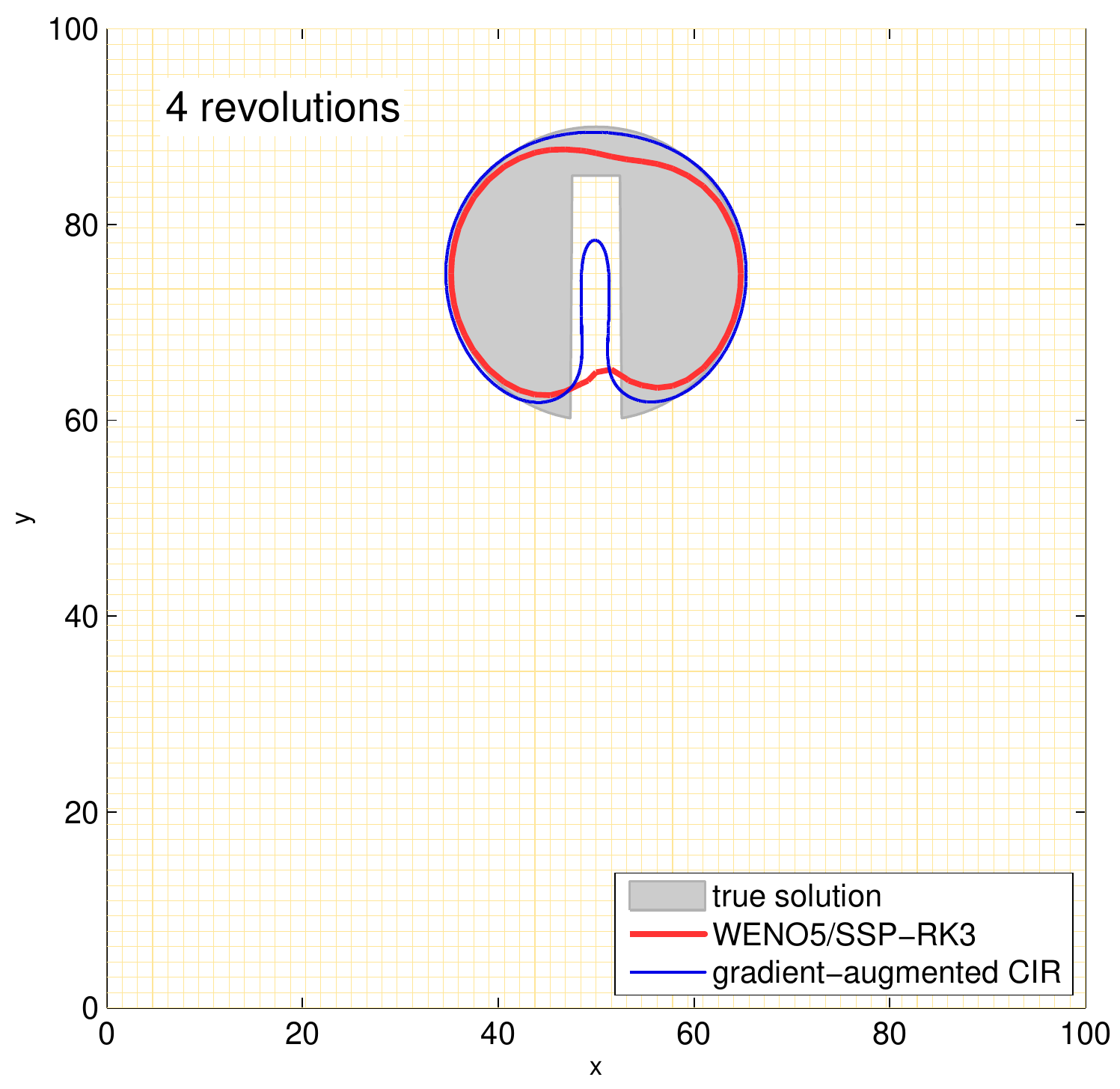}
\caption{Zalesak circle initially, after one, and after four revolutions --
classical approach (blue) vs.~gradient-augmented method (red)}
\label{fig:zalesak_circle}
\end{minipage}
\hfill
\begin{minipage}[t]{.48\textwidth}
\centering
\includegraphics[height=0.92\textwidth]{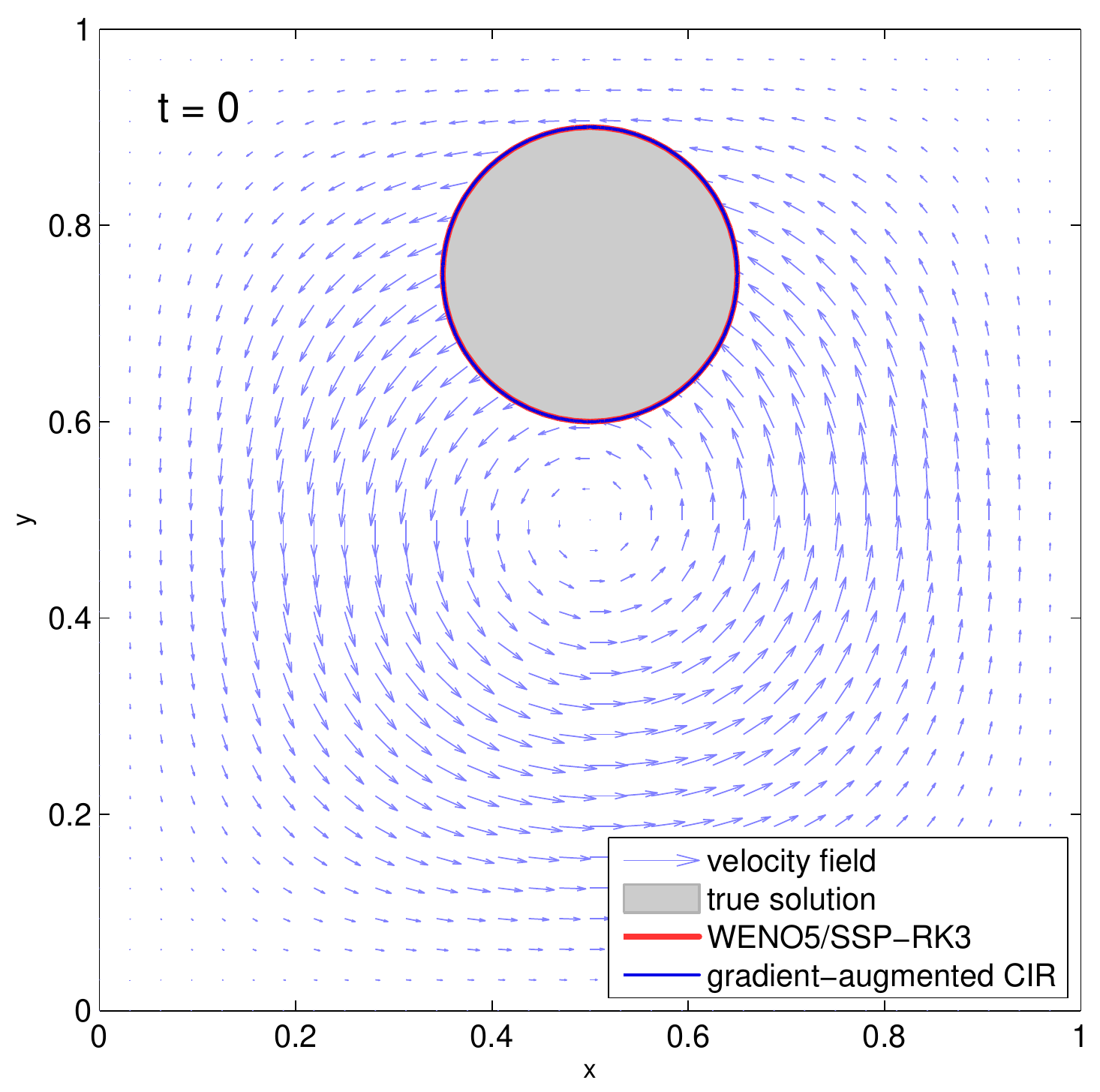} \\[.1em]
\includegraphics[height=0.92\textwidth]{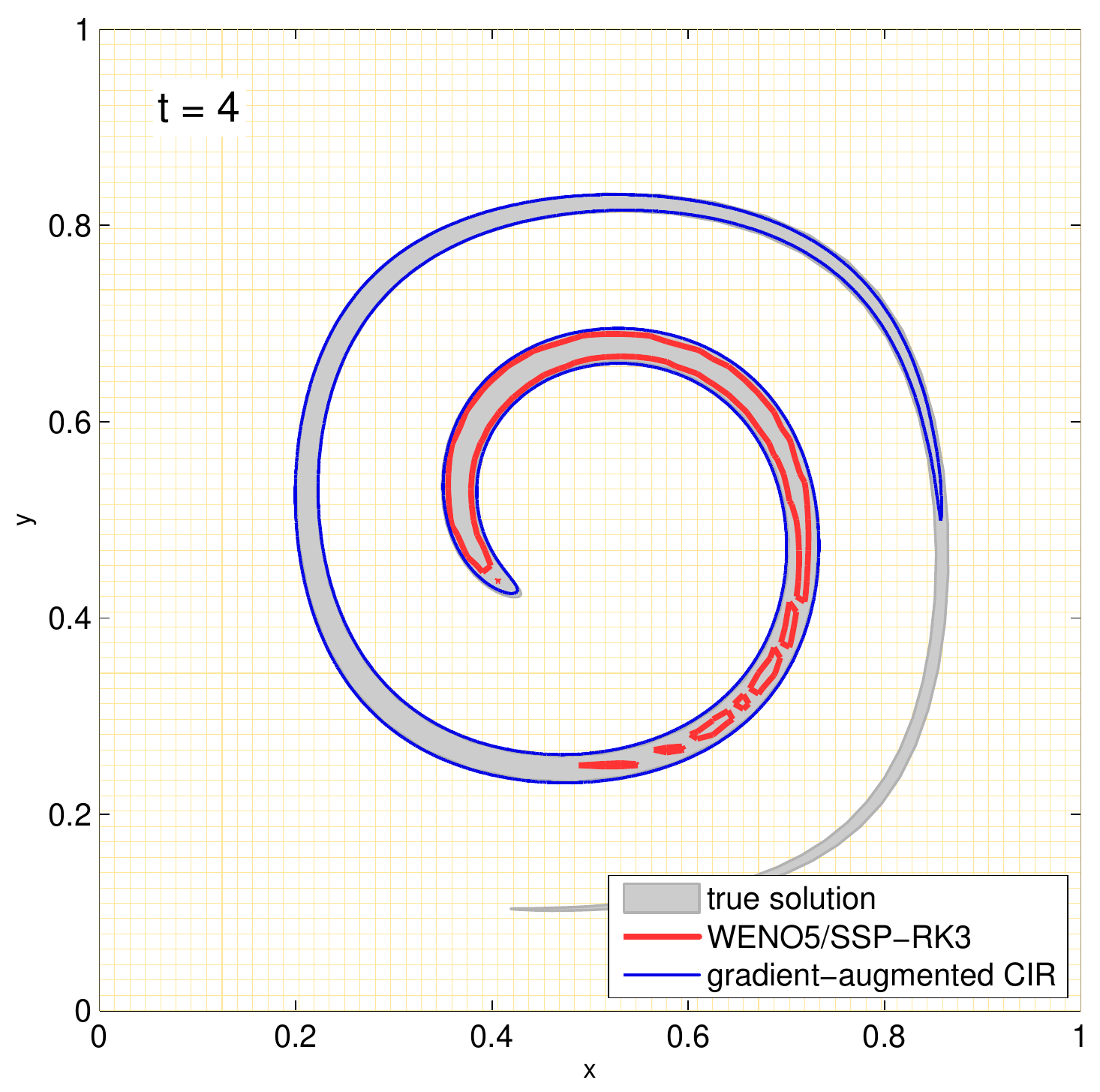} \\[.1em]
\includegraphics[height=0.92\textwidth]{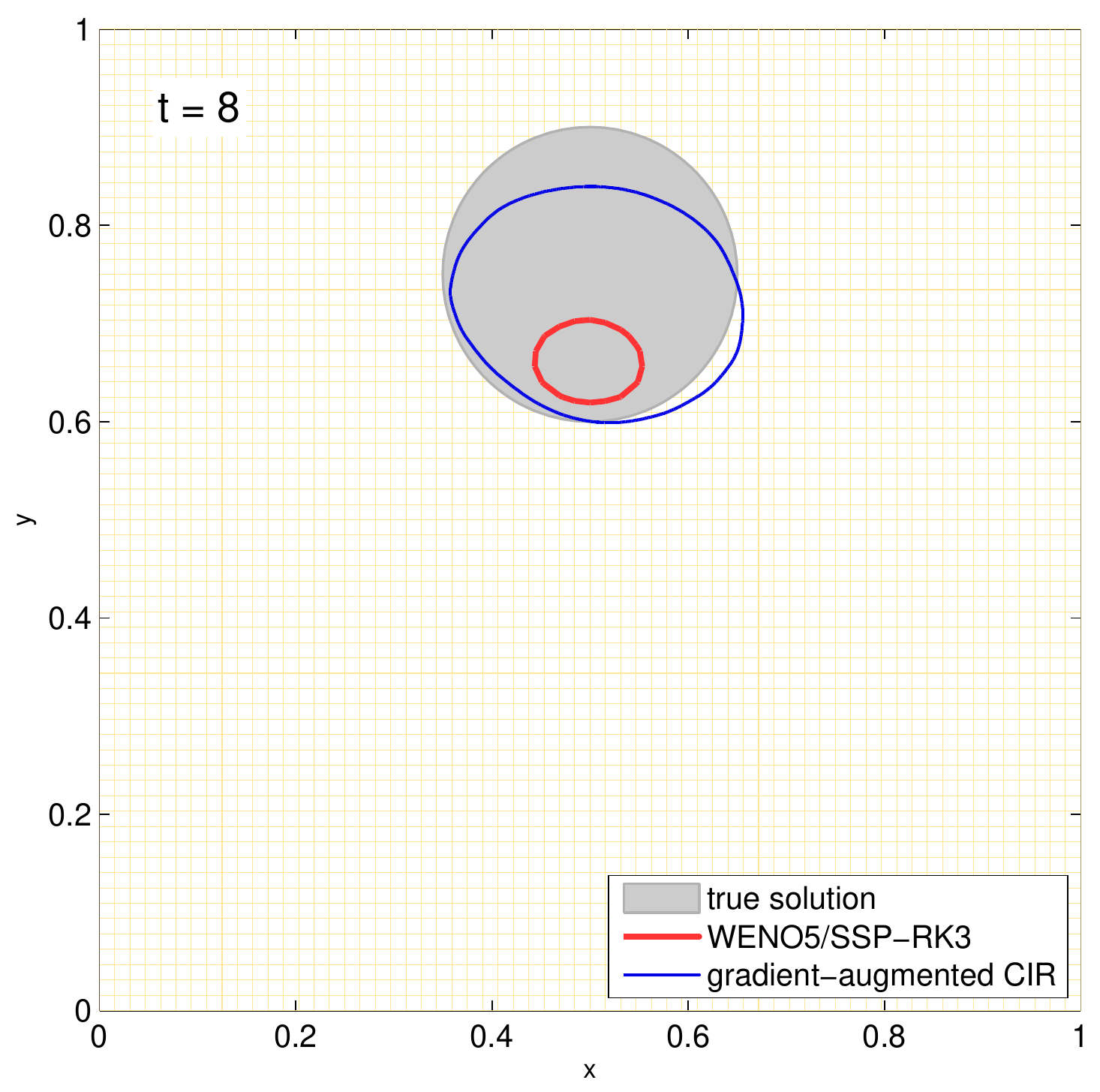}
\caption{2D deformation field test at $t\in\{0,4,8\}$ --
classical approach (blue) vs.~gradient-augmented method (red)}
\label{fig:swirl_2d}
\end{minipage}
\end{figure}

%- - - - - - - - - - - - - - - - - - - - - - - - - - - - - - - - - - - - - - - - - - - - - - -
\subsubsection{2D Deformation Field}
\label{subsubsec:numerics_swirl_2d}
%- - - - - - - - - - - - - - - - - - - - - - - - - - - - - - - - - - - - - - - - - - - - - - -
We consider the 2D velocity field described in Sect.~\ref{subsubsec:vortex_in_a_box},
with $T = 8$. On a $64\times 64$ grid, we compare the gradient-augmented CIR scheme with
the classical WENO advection scheme with reinitialization.
The time step is $\Delta t = \Delta x$.

Fig.~\ref{fig:swirl_2d} shows the evolution of the solution in time.
The top figure shows the initial conditions, and the velocity field.
The middle figure shows the obtained surface at $t = 4 = T/2$, which is the
time of maximal deformation of the zero contour. Observe that the true solution
has the surface swirled around one-and-a-half times, and the structure's thickness
gets close (or even below) the grid size.
From the analysis in Sect.~\ref{subsec:small_structures} we expect the
gradient-augmented level set method to perform better at representing the thin
structure. The results verify our expectation: the classical WENO scheme loses a
full revolution on the receding tail of the structure. In addition, the surface breaks
up into multiple components. In contrast, the gradient-augmented level set approach
recovers one connected structure, which captures the true shape very well at the
trailing front. Of course, even with gradients, there are limits to the subgrid
resolution, and here, still a quarter revolution of the tail is missing. Nevertheless,
the results are striking, considering that the grid is relatively coarse.
The bottom figure in Fig.~\ref{fig:swirl_2d} shows the results at $t = 8 = T$. At this
time, the flow has brought the structure back to its initial configuration. The
structure evolved with the WENO scheme has lost a remarkable amount of mass.
In contrast, the mass loss obtained with the gradient-augmented scheme is significantly
smaller.

%- - - - - - - - - - - - - - - - - - - - - - - - - - - - - - - - - - - - - - - - - - - - - - -
\subsubsection{Zalesak's Sphere}
%- - - - - - - - - - - - - - - - - - - - - - - - - - - - - - - - - - - - - - - - - - - - - - -
On the domain $[0,100]\times [0,100]\times [0,100]$,
a three dimensional slotted sphere of radius $15$, initially centered
at $(50,75,50)$, with a slot width of $5$ and slot depth of $25$ is
is rotated under the velocity field
\begin{align*}
u(x,y,z) &= \tfrac{\pi}{314}(50-y)\;, \\
v(x,y,z) &= \tfrac{\pi}{314}(x-50)\;, \\
w(x,y,z) &= 0\;.
\end{align*}
The test is performed on a $50\times 50\times 50$ grid.
The time step is $\Delta t = 1$.
At the inflow edges $\vec{v}\cdot\vec{n}<0$, homogeneous Neumann
boundary conditions are prescribed.

In Fig.~\ref{fig:zalesak_sphere}, the Zalesak's sphere for a sequence of snapshots
during one full revolution is shown, using the gradient-augmented level set method
(on the right), and the classical level set approach (on the left).
We observe that the gradient-augmented approach preserves the correct shape of the
surface better than the classical level set method does. In particular, it is able to
maintain the notch on the Zalesak's sphere, in contrast to the standard approach which
merges both sides while significantly gaining mass.

\begin{figure}
\centering
\begin{minipage}[t]{.48\textwidth}
\centering
\includegraphics[width=0.97\textwidth]{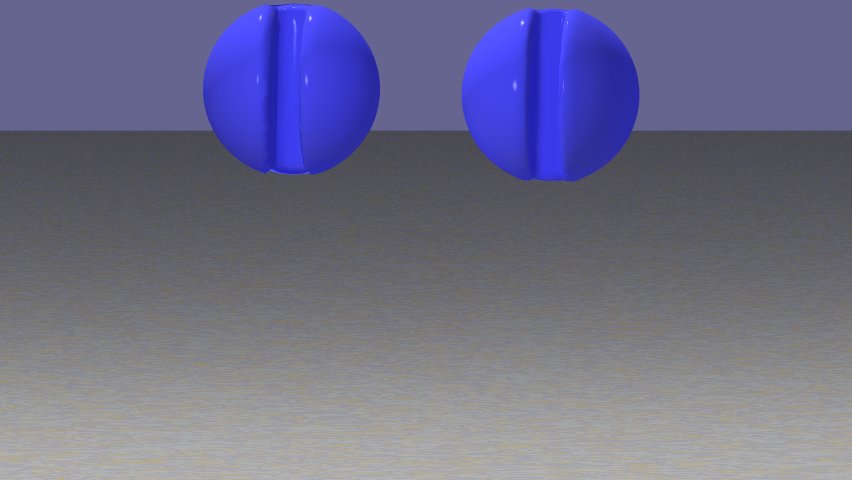} \\[.1em]
\includegraphics[width=0.97\textwidth]{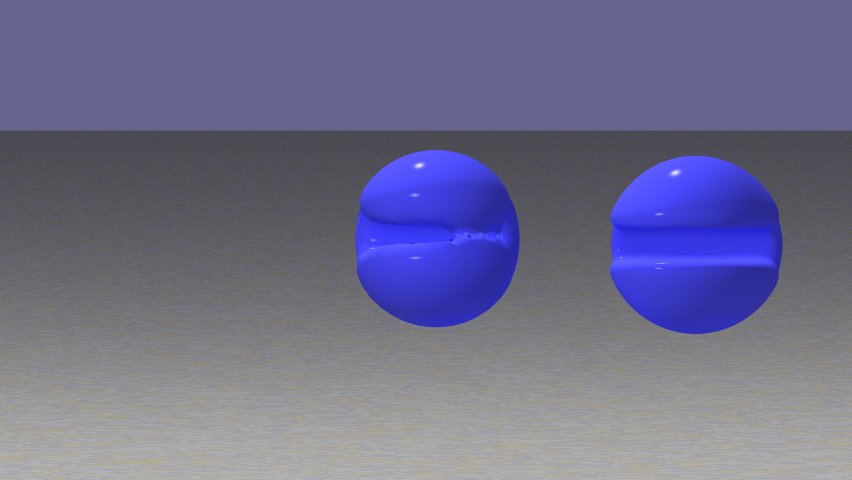} \\[.1em]
\includegraphics[width=0.97\textwidth]{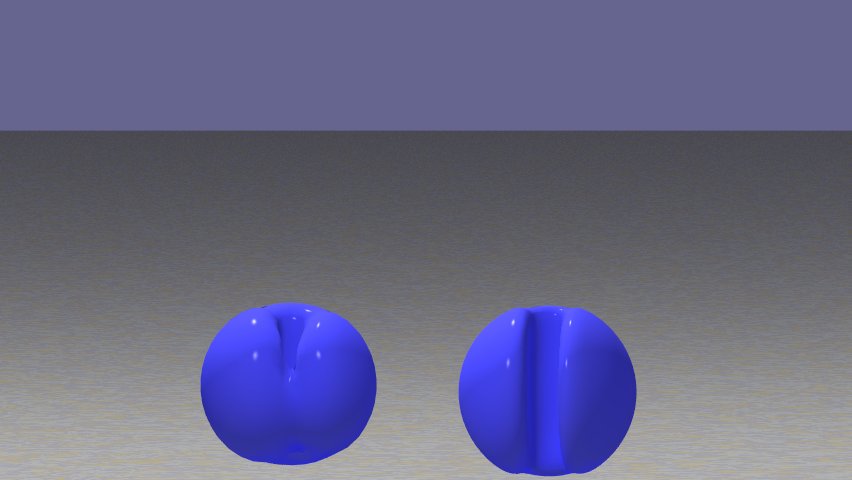} \\[.1em]
\includegraphics[width=0.97\textwidth]{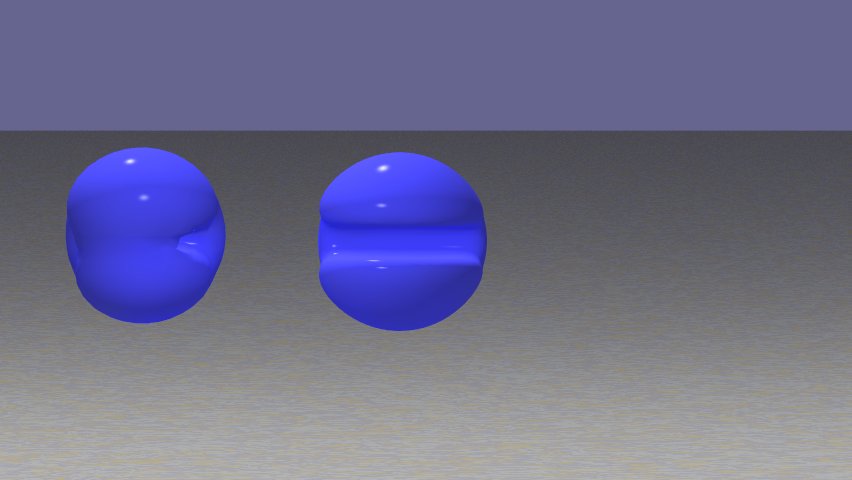} \\[.1em]
\includegraphics[width=0.97\textwidth]{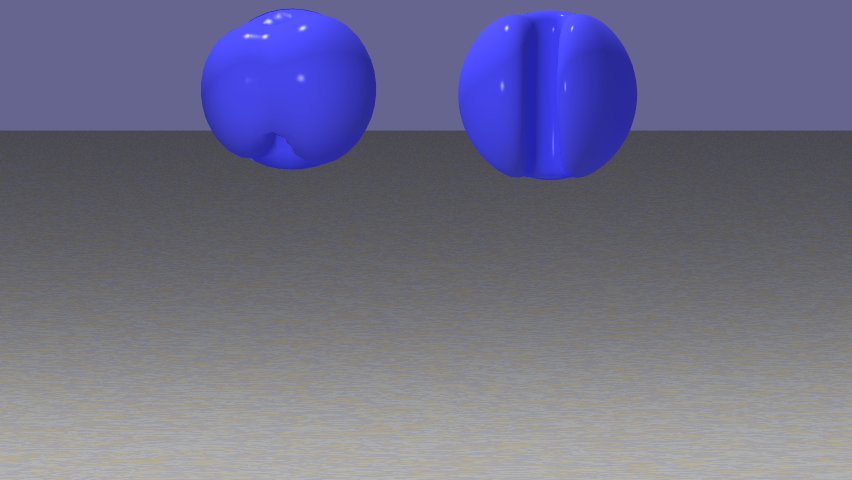}
\caption{Zalesak's sphere at $t\in\{0,157,314,471,628\}$ --
classical approach (left), gradient-augmented method (right)}
\label{fig:zalesak_sphere}
\end{minipage}
\hfill
\begin{minipage}[t]{.48\textwidth}
\centering
\includegraphics[width=0.97\textwidth]{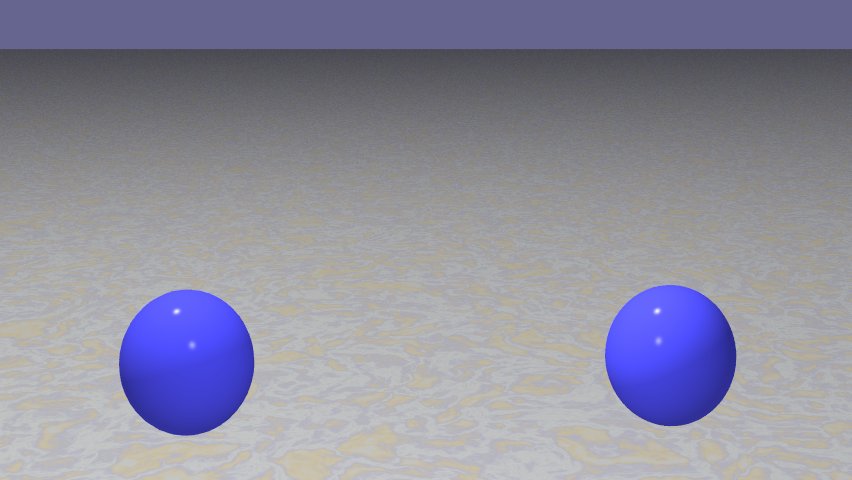} \\[.1em]
\includegraphics[width=0.97\textwidth]{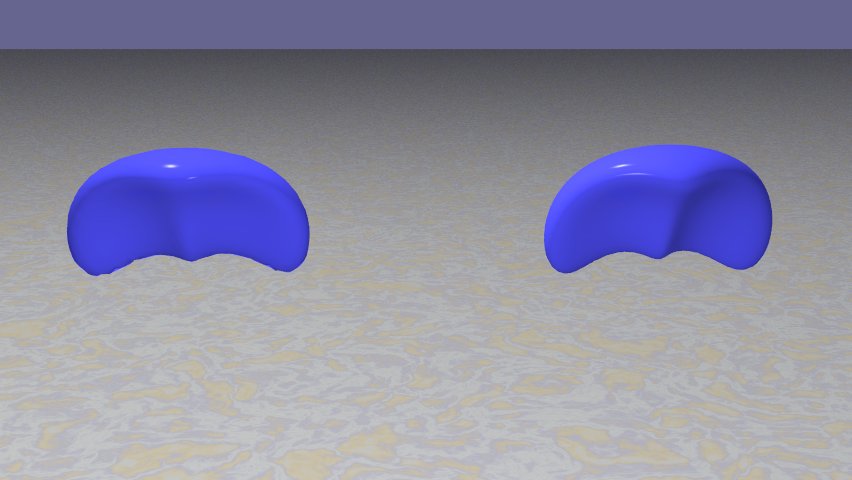} \\[.1em]
\includegraphics[width=0.97\textwidth]{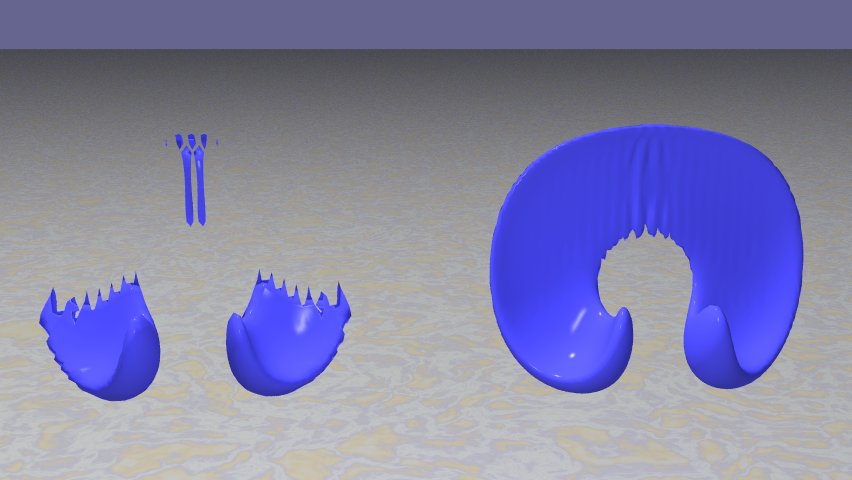} \\[.1em]
\includegraphics[width=0.97\textwidth]{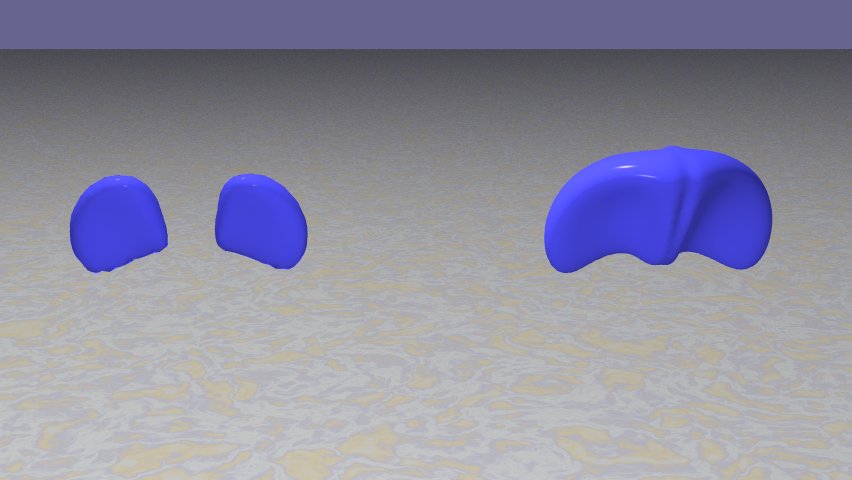} \\[.1em]
\includegraphics[width=0.97\textwidth]{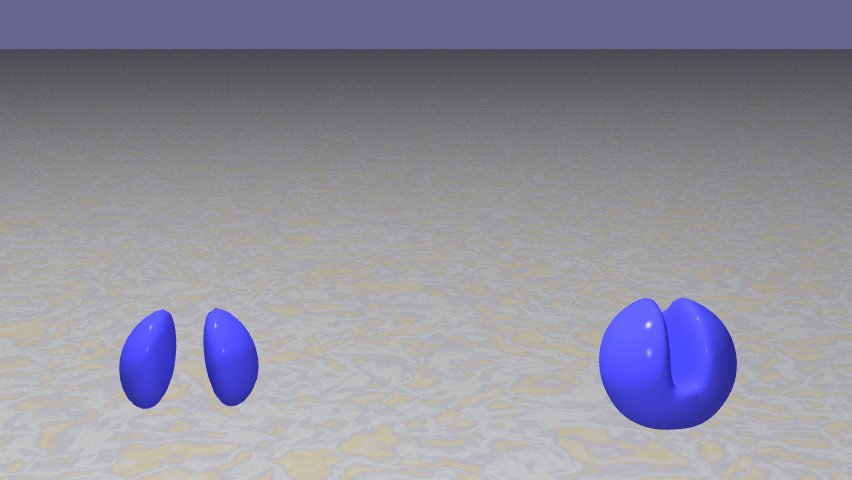}
\caption{3D deformation field test at $t\in\{0,0.625,1.25,1.875,2.5\}$ --
classical approach (left), gradient-augmented method (right)}
\label{fig:3D_deformation_field}
\end{minipage}
\end{figure}

%- - - - - - - - - - - - - - - - - - - - - - - - - - - - - - - - - - - - - - - - - - - - - - -
\subsubsection{3D Deformation Field}
\label{subsubsec:3d_deformation}
%- - - - - - - - - - - - - - - - - - - - - - - - - - - - - - - - - - - - - - - - - - - - - - -
LeVeque proposed a three dimensional incompressible flow field \cite{LeVeque1996}
which combines a deformation in the $x$-$y$ plane with one in the $x$-$z$ plane.
The velocity is given by
\begin{align*}
u(x,y,z) &= 2\sin(\pi x)^{2}\sin(2\pi xy)\sin(2\pi z)\;, \\
v(x,y,z) &= -\sin(2\pi x)\sin(\pi y)^{2}\sin(2\pi z)\;, \\
w(x,y,z) &= -\sin(2\pi x)\sin(2\pi y)\sin(\pi z)^{2}\;.
\end{align*}
The field is modulated in time using $\cos\prn{\tfrac{\pi t}{T}}$. Here, we
consider $T = 2.5$. The tests is performed on a $50\times 50\times 50$ grid.
A sphere of radius $0.15$ that is initially centered at $(0.35,0.35,0.35)$,
is advected up to $t = 2.5$, i.e.~it is deformed by the flow, and then brought back
to its initial configuration. As before, we compare the gradient-augmented scheme
with WENO, both with $\Delta t = \Delta x$.

In Fig.~\ref{fig:3D_deformation_field}, a sequence of snapshots is shown, using the
gradient-augmented level set method (on the right), and the classical level set
approach (on the left). We observe the CIR method's ability to conserve mass and
maintain the topology of the sphere more accurately than the standard approach.
In the middle frame, one can observe that the classical level set method suffers a
significant loss of mass around the time of maximal deformation $t = 1.25$, at which
the surface is very thin. Evidently, the ability of the gradient-augmented level
set method to represent structures of subgrid size is particularly beneficial here.

Note that the jagged edges seen in the middle frame in
Fig.~\ref{fig:3D_deformation_field} are due to the representation of a surface that
is thinner than the grid resolution. While these grid effects are present in both
numerical schemes, the level set functions themselves do not show any spurious
oscillations.

%- - - - - - - - - - - - - - - - - - - - - - - - - - - - - - - - - - - - - - - - - - - - - - -
\subsubsection{Computational Effort}
%- - - - - - - - - - - - - - - - - - - - - - - - - - - - - - - - - - - - - - - - - - - - - - -
Both in 2D and 3D, the gradient-augmented level set method carries about the same
computational cost as the classical level set method. More specifically, we compare
the CPU times for the computation of the 2D deformation field, considered
in Sect.~\ref{subsubsec:numerics_swirl_2d}, on a $64\times 64$ grid, using a
single core desktop computer.
The classical level set approach takes 13 seconds with reinitialization,
and 10 seconds without (in which case the quality of the results is significantly
reduced). In comparison, the gradient-augmented approach takes 9 seconds when
the superconsistent Shu-Osher RK3 method is used, and 8 seconds when gradients are
updated using Heun's method.
These ratios carry approximately over to other grid resolutions and other tests.

\begin{figure}[p]
Gradient-augmented CIR method \\
\begin{minipage}[t]{.24\textwidth}
\includegraphics[height=\textwidth]{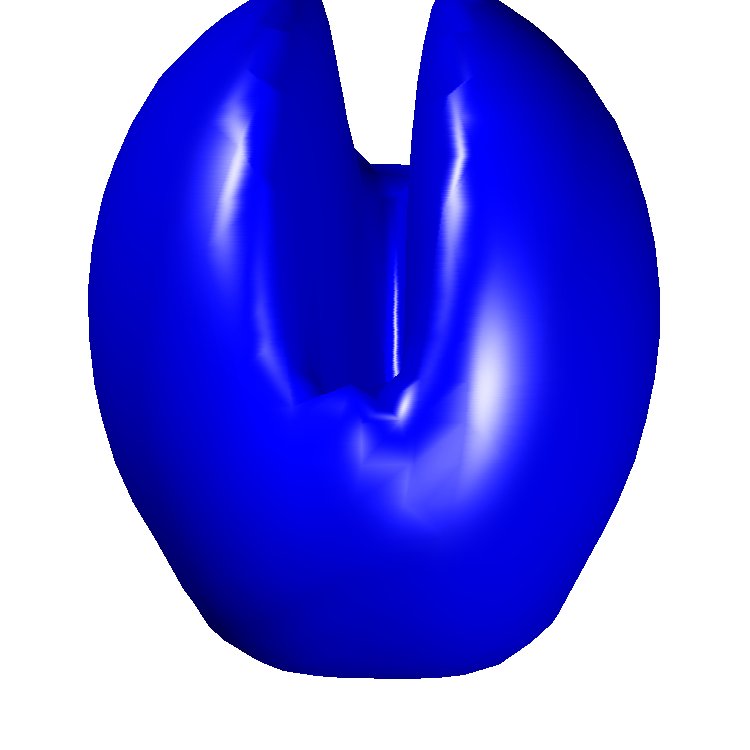} \\[-2em]
\begin{center}\begin{footnotesize} $50\times 50\times 50$ grid \end{footnotesize}\end{center}
\end{minipage}
\hfill
\begin{minipage}[t]{.24\textwidth}
\includegraphics[height=\textwidth]{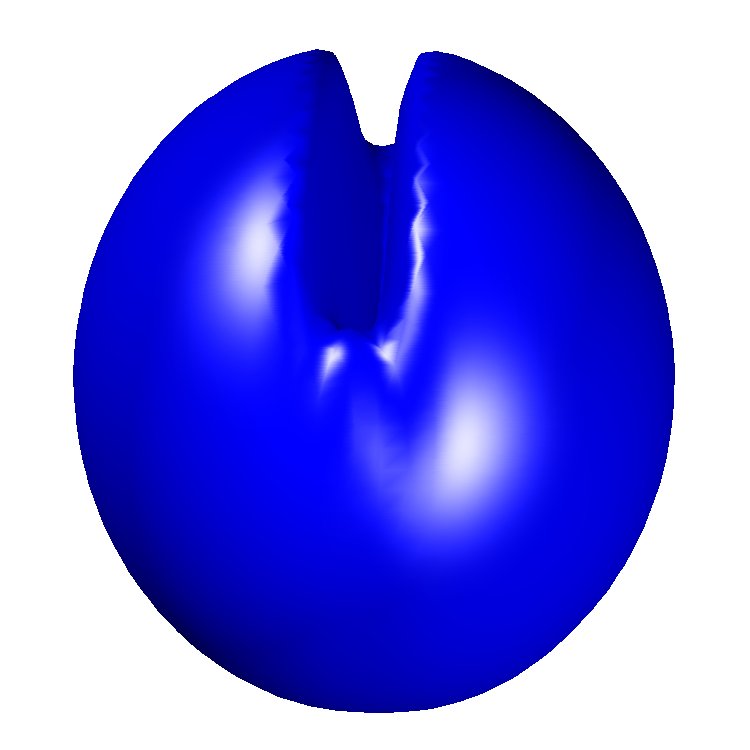} \\[-2em]
\begin{center}\begin{footnotesize} $100\times 100\times 100$ grid \end{footnotesize}\end{center}
\end{minipage}
\hfill
\begin{minipage}[t]{.24\textwidth}
\includegraphics[height=\textwidth]{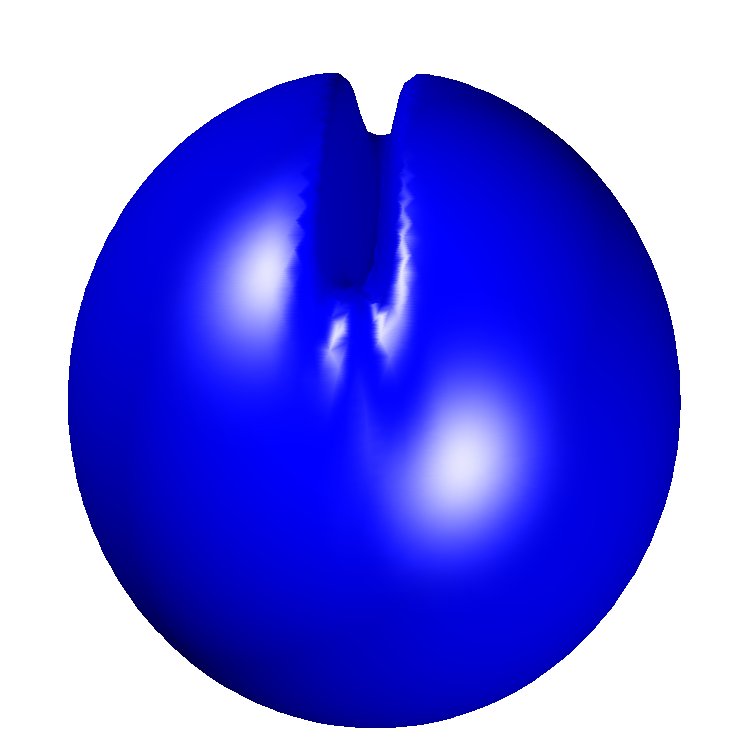} \\[-2em]
\begin{center}\begin{footnotesize} $150\times 150\times 150$ grid \end{footnotesize}\end{center}
\end{minipage}
\hfill
\begin{minipage}[t]{.24\textwidth}
\includegraphics[height=\textwidth]{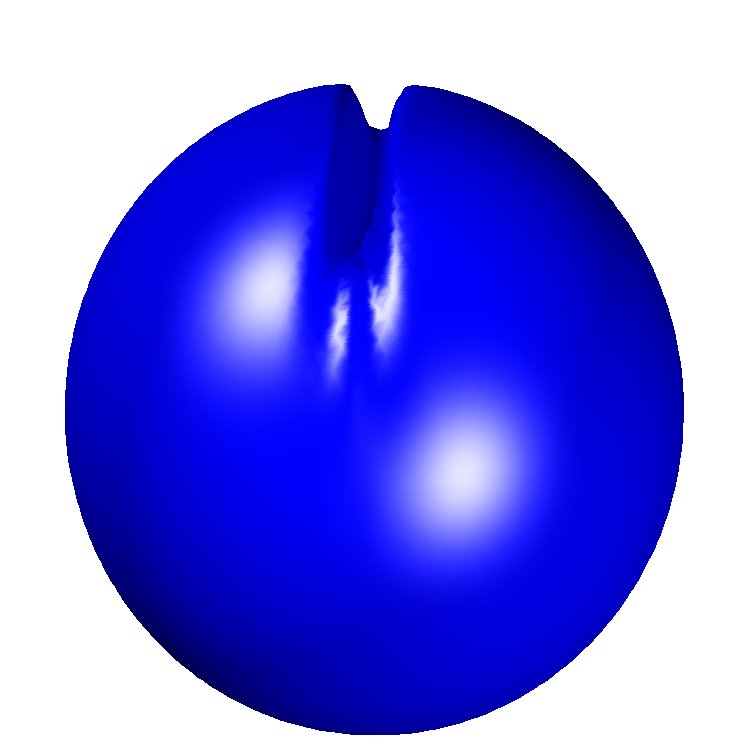} \\[-2em]
\begin{center}\begin{footnotesize} $200\times 200\times 200$ grid \end{footnotesize}\end{center}
\end{minipage}

\vspace{1em}
WENO \\
\begin{minipage}[t]{.24\textwidth}
\includegraphics[height=\textwidth]{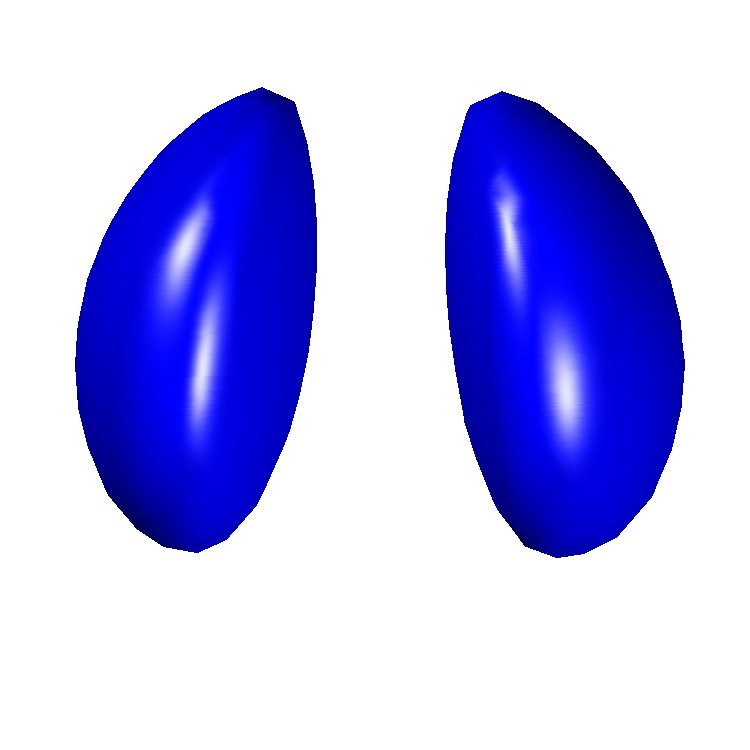} \\[-2em]
\begin{center}\begin{footnotesize} $50\times 50\times 50$ grid \end{footnotesize}\end{center}
\end{minipage}
\hfill
\begin{minipage}[t]{.24\textwidth}
\includegraphics[height=\textwidth]{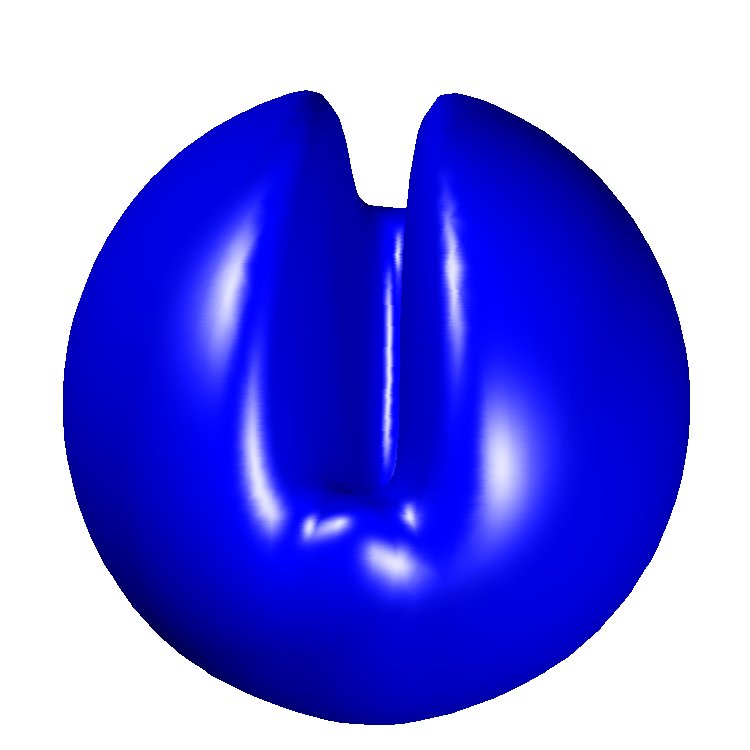} \\[-2em]
\begin{center}\begin{footnotesize} $100\times 100\times 100$ grid \end{footnotesize}\end{center}
\end{minipage}
\hfill
\begin{minipage}[t]{.24\textwidth}
\includegraphics[height=\textwidth]{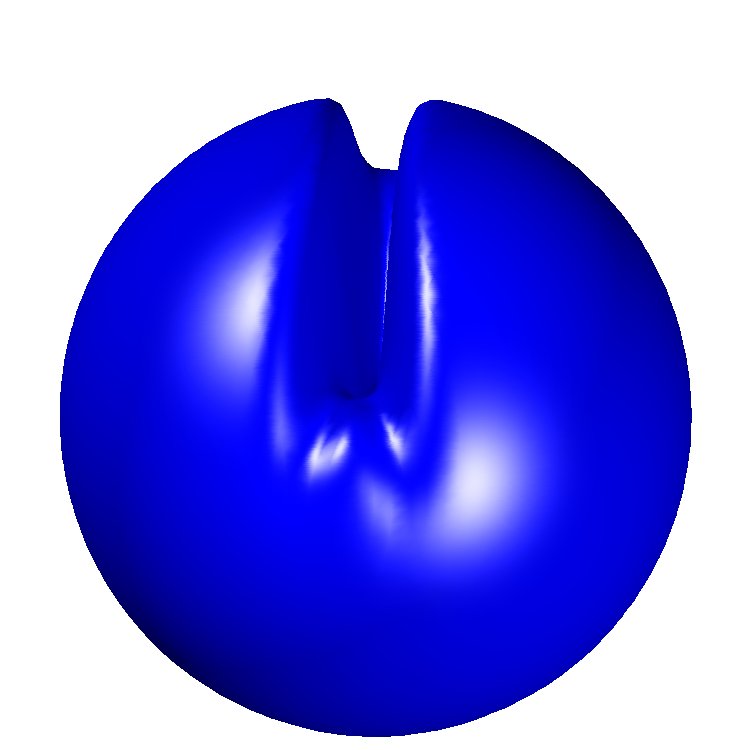} \\[-2em]
\begin{center}\begin{footnotesize} $150\times 150\times 150$ grid \end{footnotesize}\end{center}
\end{minipage}
\hfill
\begin{minipage}[t]{.24\textwidth}
\includegraphics[height=\textwidth]{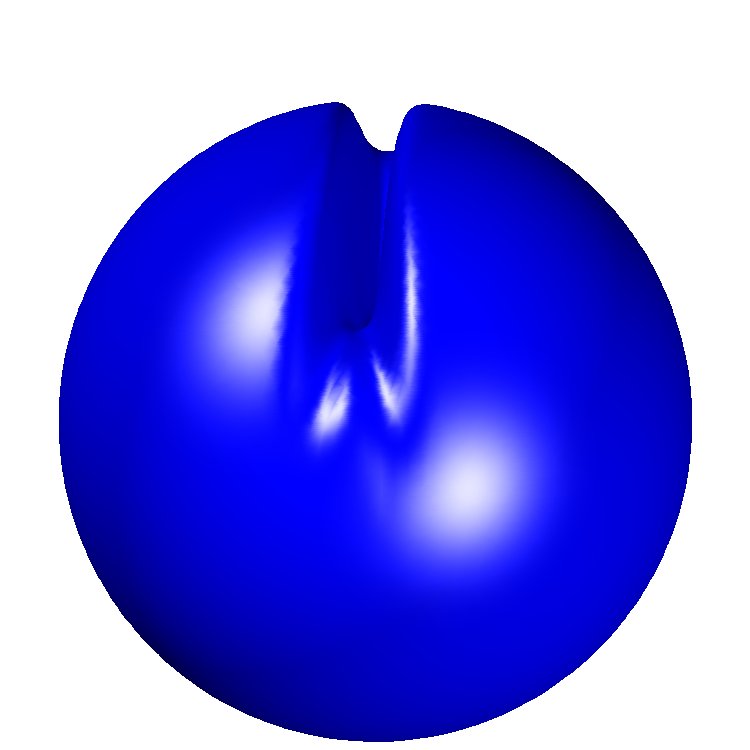} \\[-2em]
\begin{center}\begin{footnotesize} $200\times 200\times 200$ grid \end{footnotesize}\end{center}
\end{minipage}

\vspace{1em}
WENO with reinitialization \\
\begin{minipage}[t]{.24\textwidth}
\includegraphics[height=\textwidth]{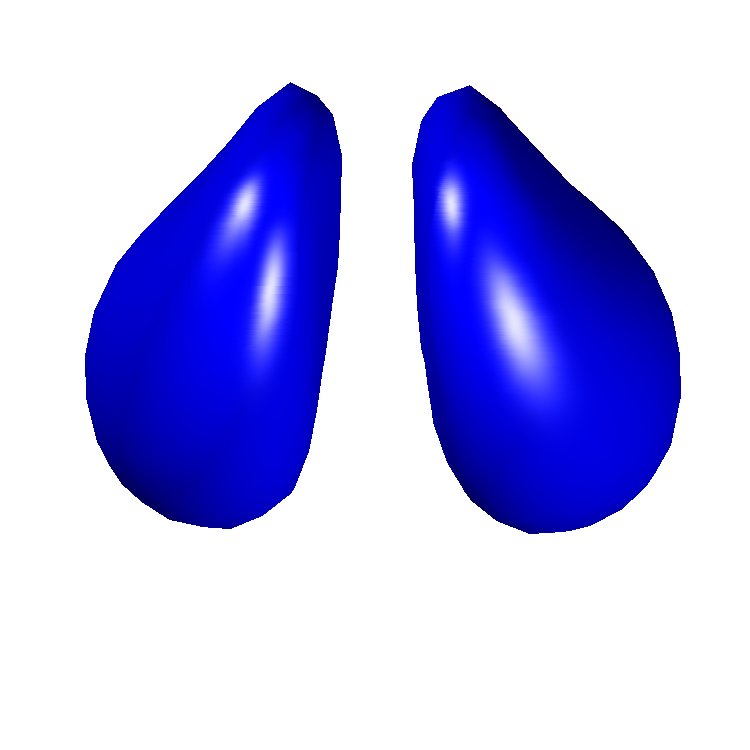} \\[-2em]
\begin{center}\begin{footnotesize} $50\times 50\times 50$ grid \end{footnotesize}\end{center}
\end{minipage}
\hfill
\begin{minipage}[t]{.24\textwidth}
\includegraphics[height=\textwidth]{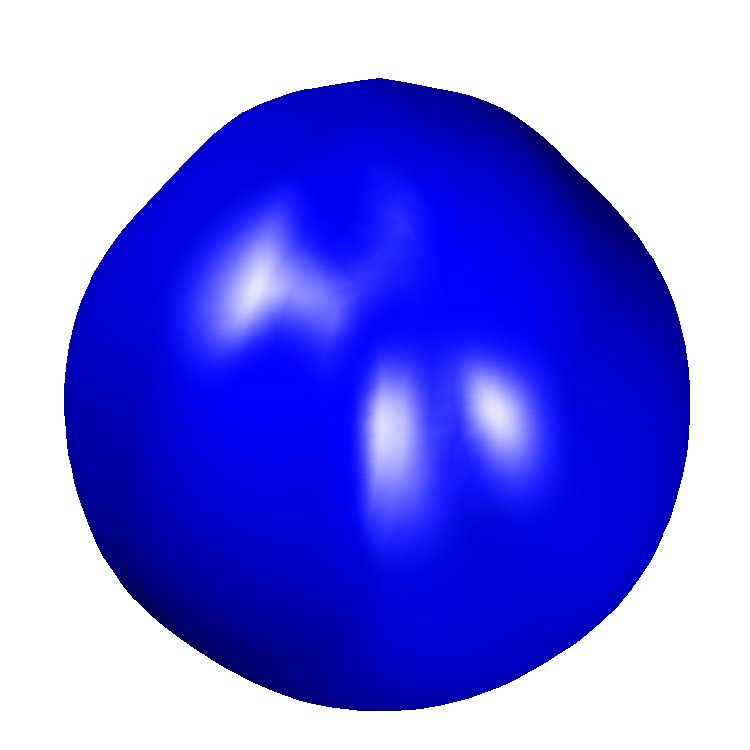} \\[-2em]
\begin{center}\begin{footnotesize} $100\times 100\times 100$ grid \end{footnotesize}\end{center}
\end{minipage}
\hfill
\begin{minipage}[t]{.24\textwidth}
\includegraphics[height=\textwidth]{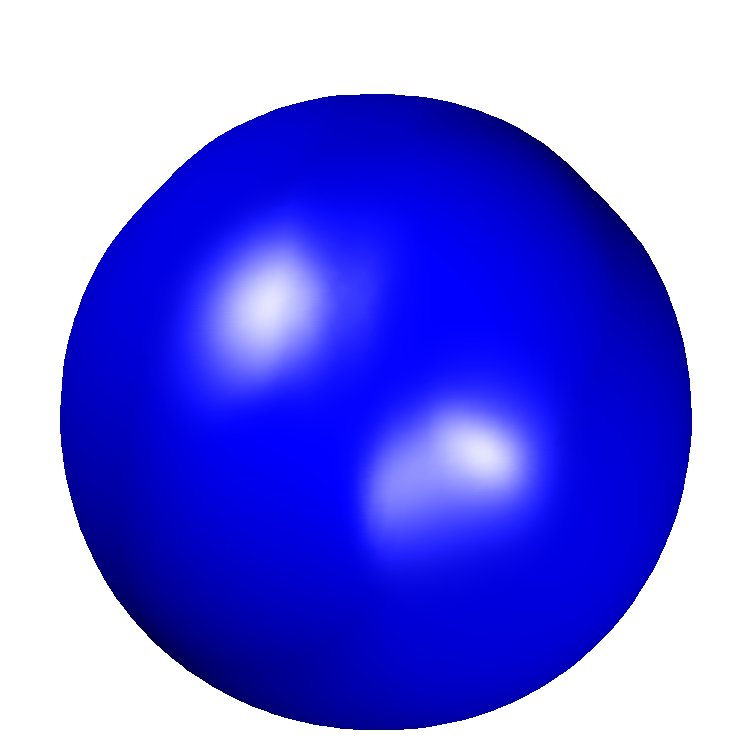} \\[-2em]
\begin{center}\begin{footnotesize} $150\times 150\times 150$ grid \end{footnotesize}\end{center}
\end{minipage}
\hfill
\begin{minipage}[t]{.24\textwidth}
\includegraphics[height=\textwidth]{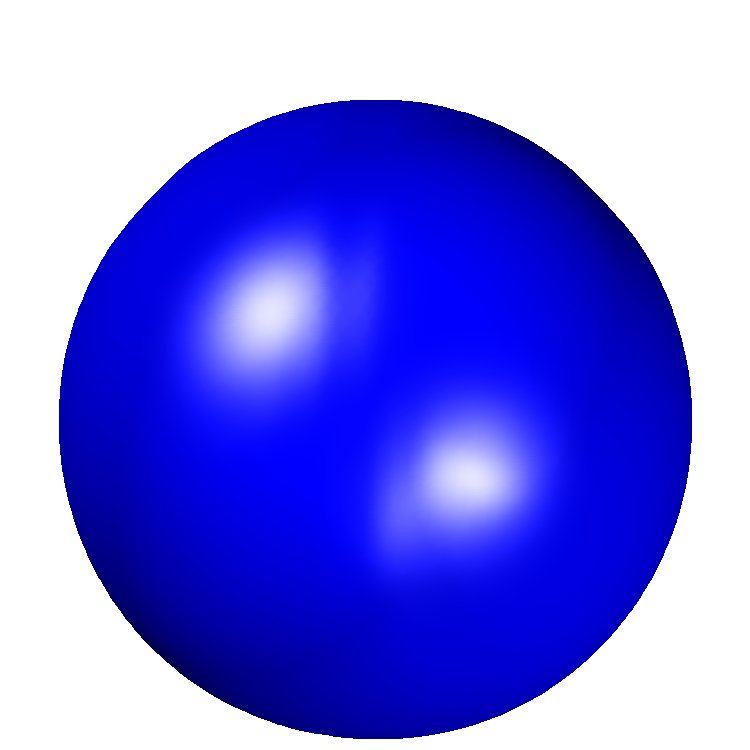} \\[-2em]
\begin{center}\begin{footnotesize} $200\times 200\times 200$ grid \end{footnotesize}\end{center}
\end{minipage}
\caption{Final state of deformation of a sphere for various resolutions}
\label{fig:final_sphere}
\end{figure}

\begin{table}[p]
\centering
\vspace{2em}
\begin{tabular}{|l|rrrr|}
\hline
resolution  & $n=50$  & $n=100$ & $n=150$ & $n=200$ \\
\hline
GA-CIR      &  -6.4\% &  -2.9\% &  -2.0\% &  -1.3\% \\
WENO        & -63.1\% & -14.1\% &  -4.3\% &  -2.0\% \\
WENO~reinit & -66.3\% &  -9.0\% &  -2.9\% &  -1.3\% \\
\hline
\end{tabular}
\caption{Volume loss for the deformation of a sphere}
\label{tab:volume_sphere}
\vspace{3em} ~
\end{table}

\begin{figure}[p]
Gradient-augmented CIR method \\
\begin{minipage}[t]{.24\textwidth}
\includegraphics[height=\textwidth]{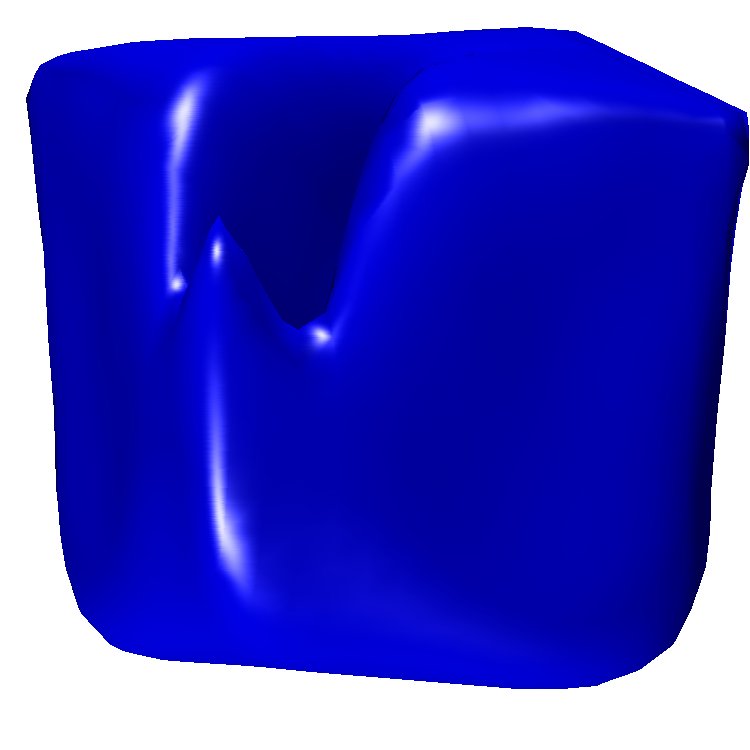} \\[-2em]
\begin{center}\begin{footnotesize} $50\times 50\times 50$ grid \end{footnotesize}\end{center}
\end{minipage}
\hfill
\begin{minipage}[t]{.24\textwidth}
\includegraphics[height=\textwidth]{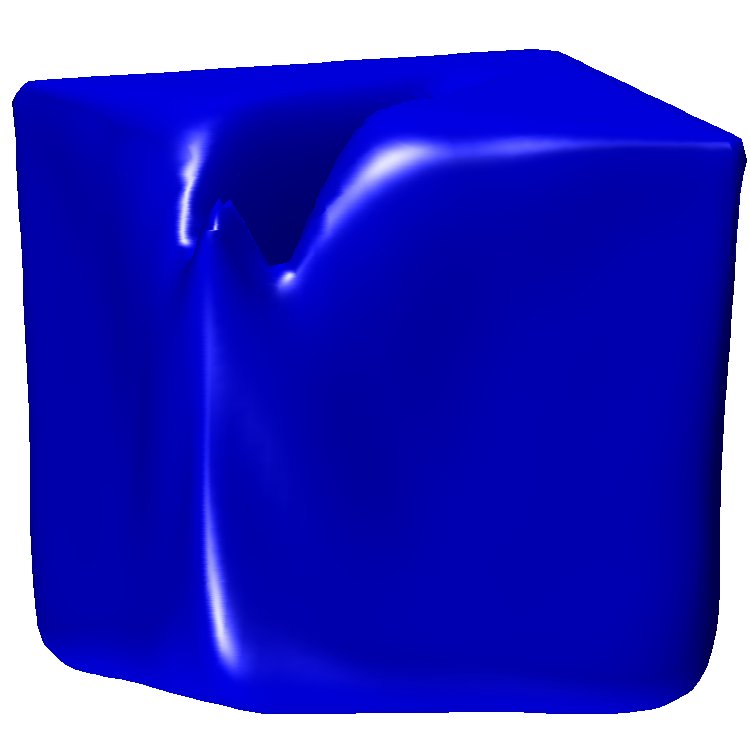} \\[-2em]
\begin{center}\begin{footnotesize} $100\times 100\times 100$ grid \end{footnotesize}\end{center}
\end{minipage}
\hfill
\begin{minipage}[t]{.24\textwidth}
\includegraphics[height=\textwidth]{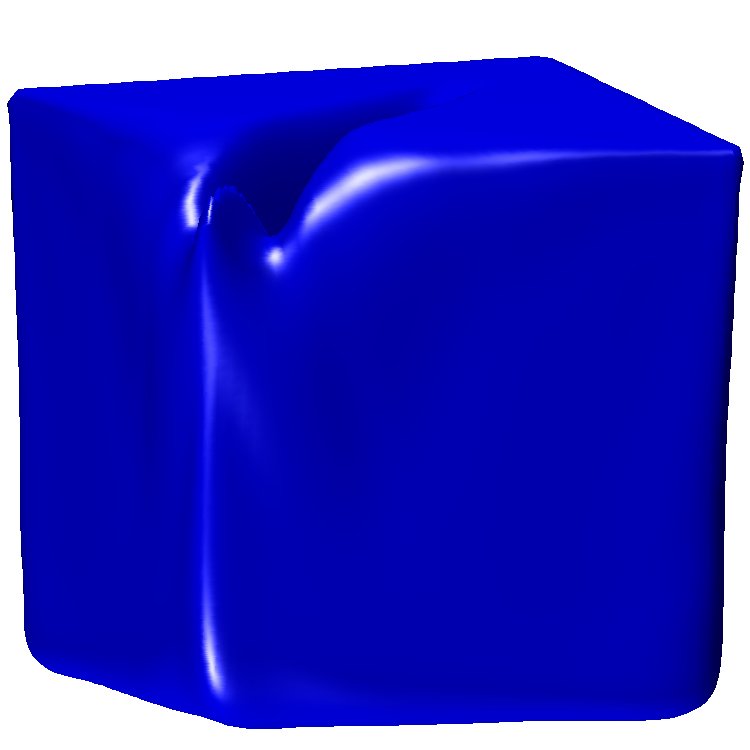} \\[-2em]
\begin{center}\begin{footnotesize} $150\times 150\times 150$ grid \end{footnotesize}\end{center}
\end{minipage}
\hfill
\begin{minipage}[t]{.24\textwidth}
\includegraphics[height=\textwidth]{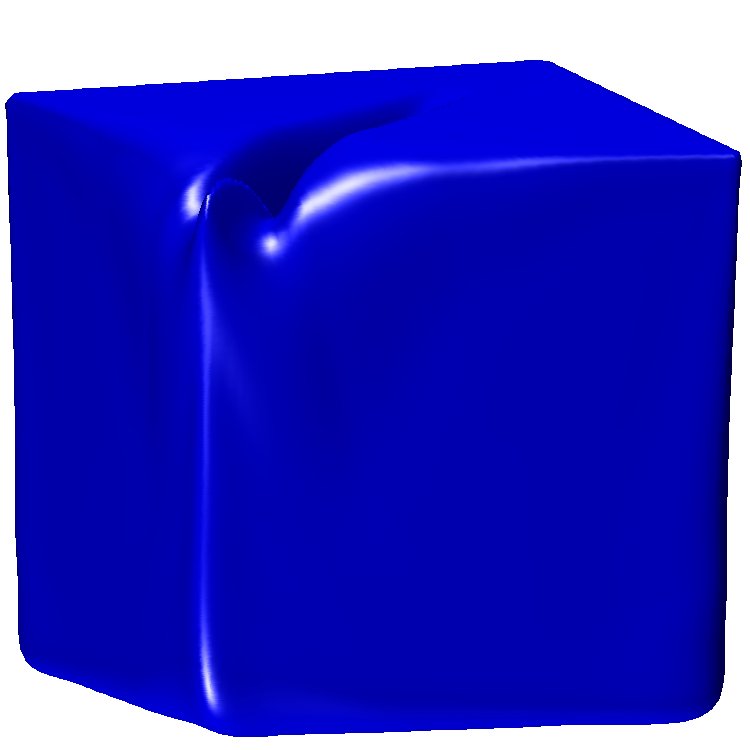} \\[-2em]
\begin{center}\begin{footnotesize} $200\times 200\times 200$ grid \end{footnotesize}\end{center}
\end{minipage}

\vspace{1em}
WENO \\
\begin{minipage}[t]{.24\textwidth}
\includegraphics[height=\textwidth]{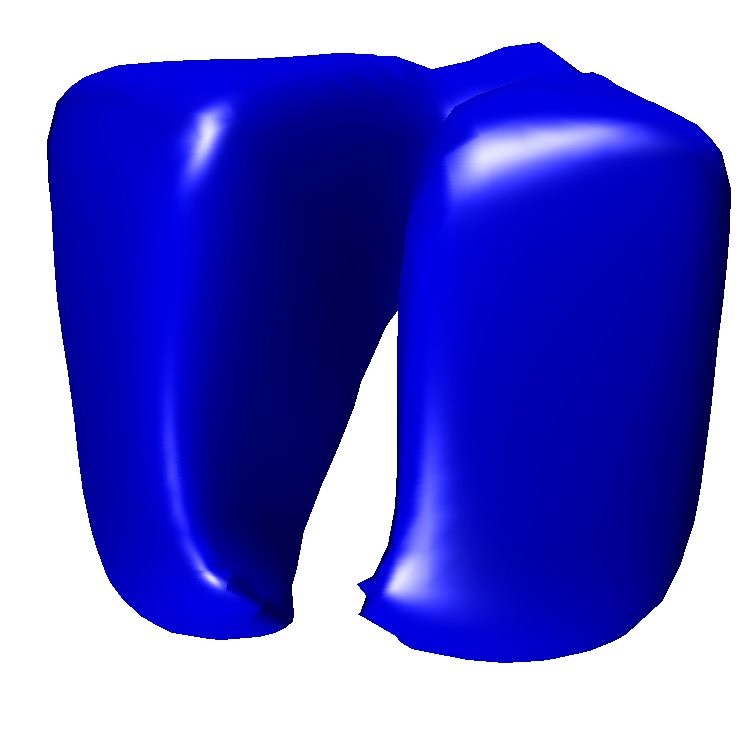} \\[-2em]
\begin{center}\begin{footnotesize} $50\times 50\times 50$ grid \end{footnotesize}\end{center}
\end{minipage}
\hfill
\begin{minipage}[t]{.24\textwidth}
\includegraphics[height=\textwidth]{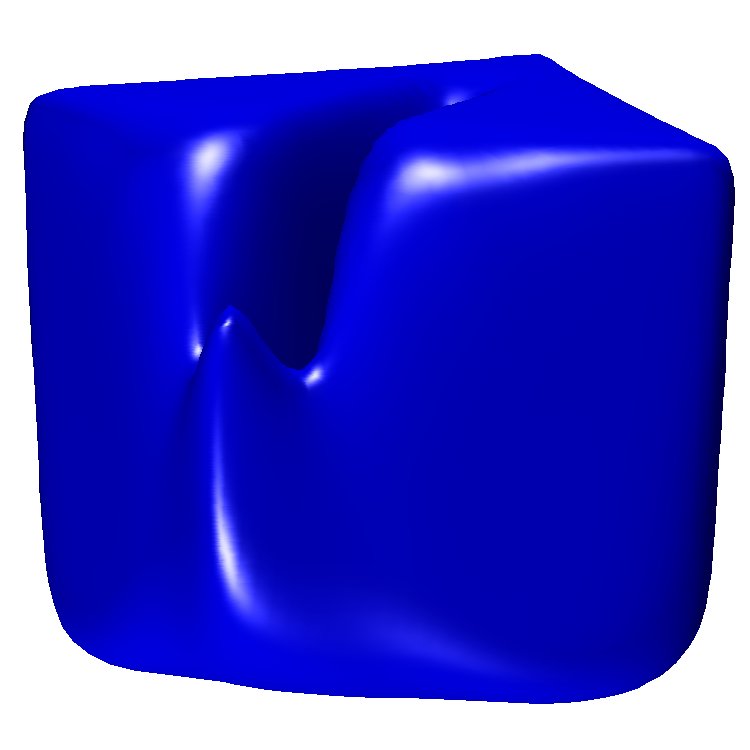} \\[-2em]
\begin{center}\begin{footnotesize} $100\times 100\times 100$ grid \end{footnotesize}\end{center}
\end{minipage}
\hfill
\begin{minipage}[t]{.24\textwidth}
\includegraphics[height=\textwidth]{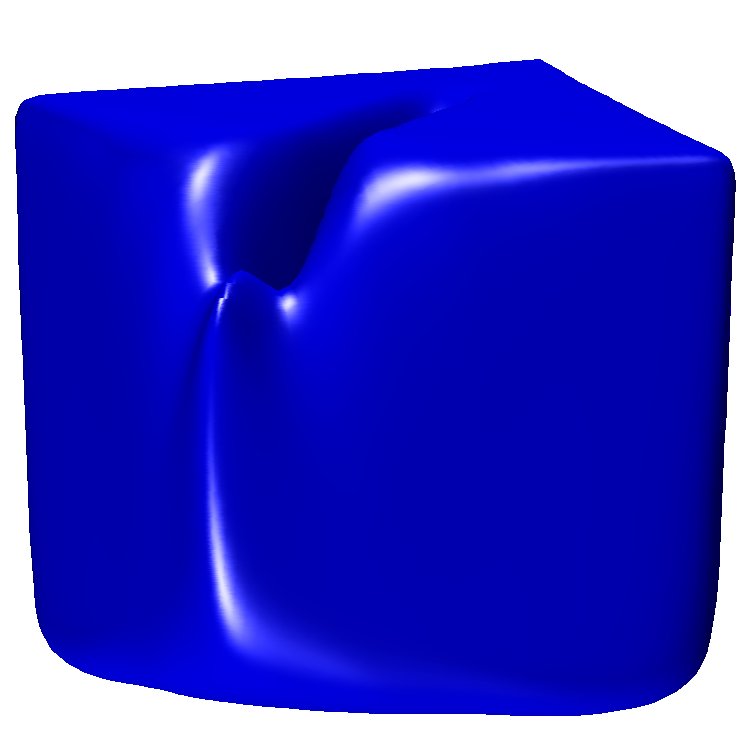} \\[-2em]
\begin{center}\begin{footnotesize} $150\times 150\times 150$ grid \end{footnotesize}\end{center}
\end{minipage}
\hfill
\begin{minipage}[t]{.24\textwidth}
\includegraphics[height=\textwidth]{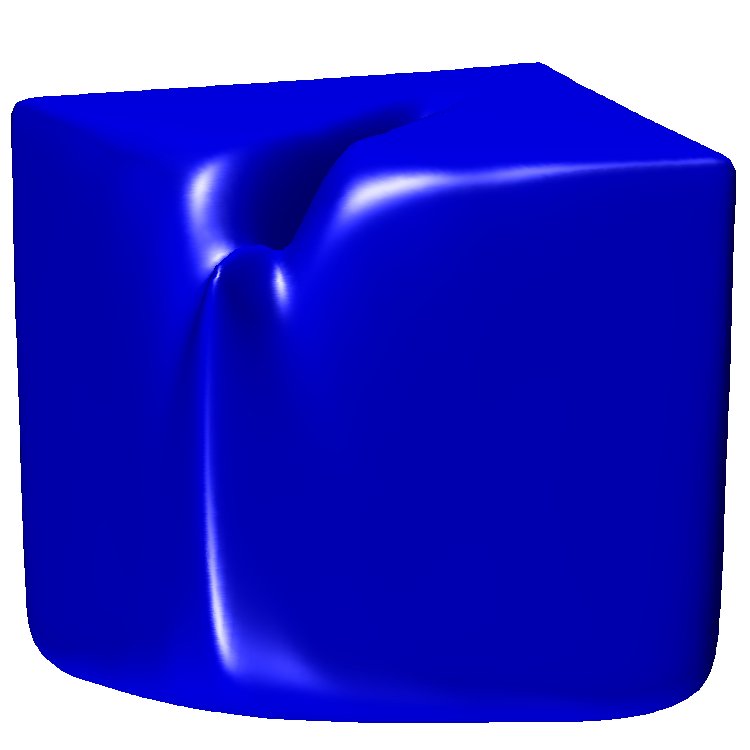} \\[-2em]
\begin{center}\begin{footnotesize} $200\times 200\times 200$ grid \end{footnotesize}\end{center}
\end{minipage}

\vspace{1em}
WENO with reinitialization \\
\begin{minipage}[t]{.24\textwidth}
\includegraphics[height=\textwidth]{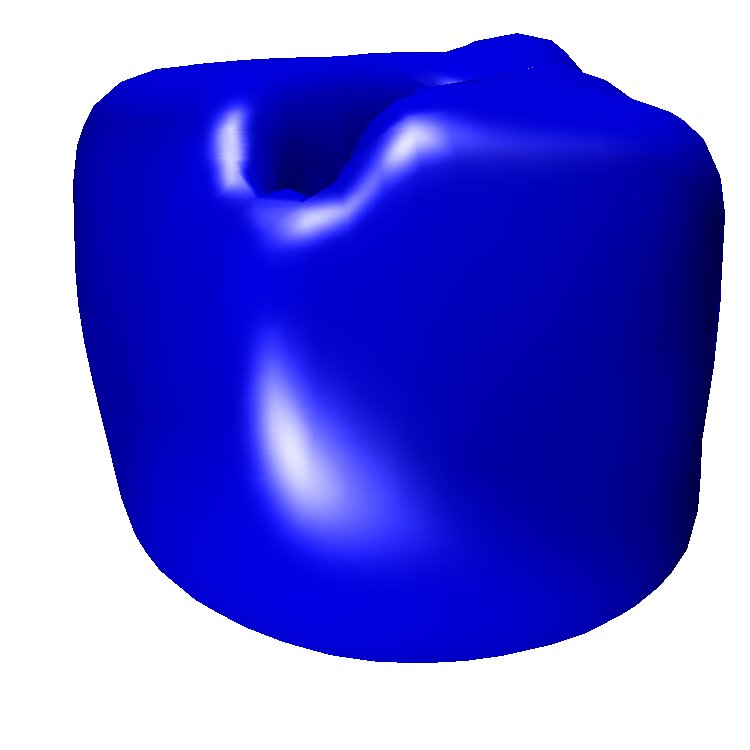} \\[-2em]
\begin{center}\begin{footnotesize} $50\times 50\times 50$ grid \end{footnotesize}\end{center}
\end{minipage}
\hfill
\begin{minipage}[t]{.24\textwidth}
\includegraphics[height=\textwidth]{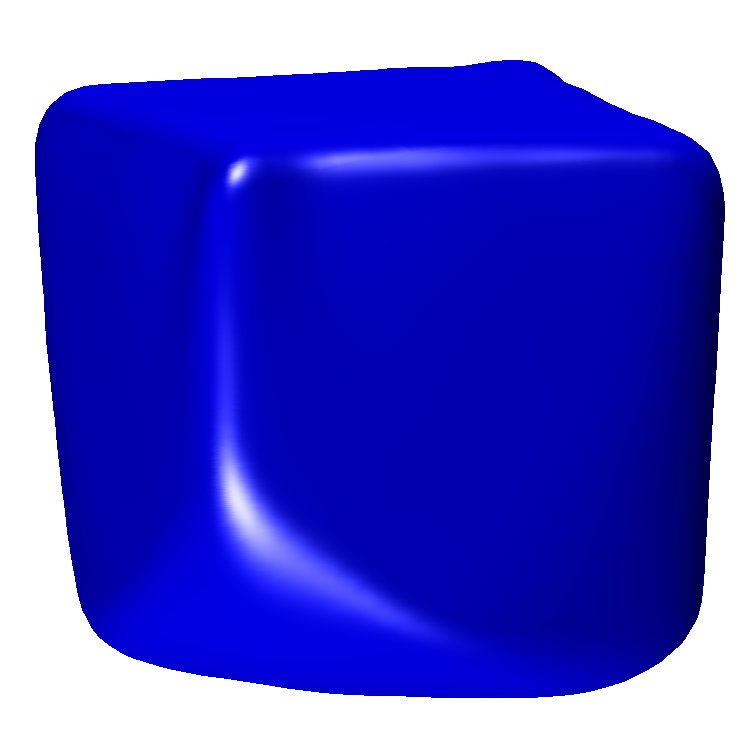} \\[-2em]
\begin{center}\begin{footnotesize} $100\times 100\times 100$ grid \end{footnotesize}\end{center}
\end{minipage}
\hfill
\begin{minipage}[t]{.24\textwidth}
\includegraphics[height=\textwidth]{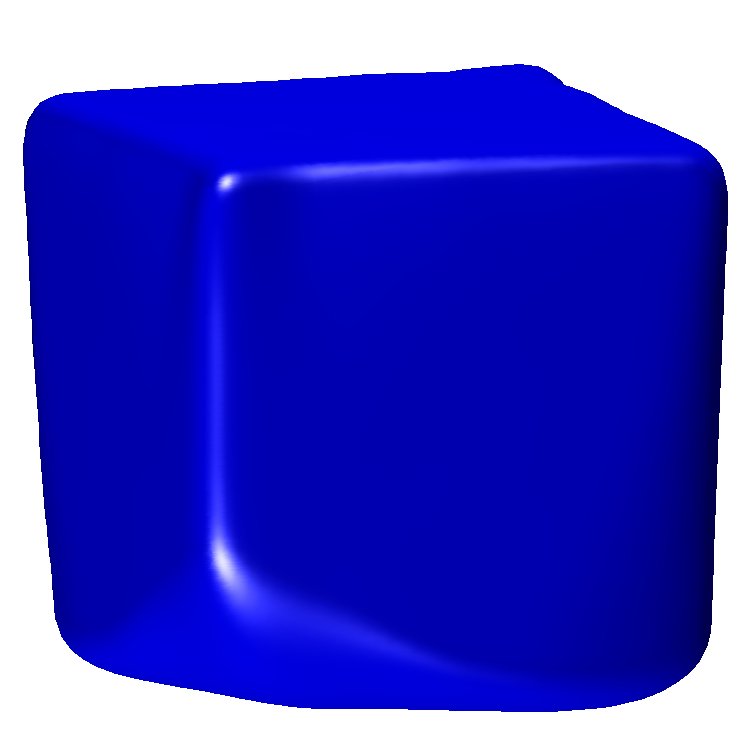} \\[-2em]
\begin{center}\begin{footnotesize} $150\times 150\times 150$ grid \end{footnotesize}\end{center}
\end{minipage}
\hfill
\begin{minipage}[t]{.24\textwidth}
\includegraphics[height=\textwidth]{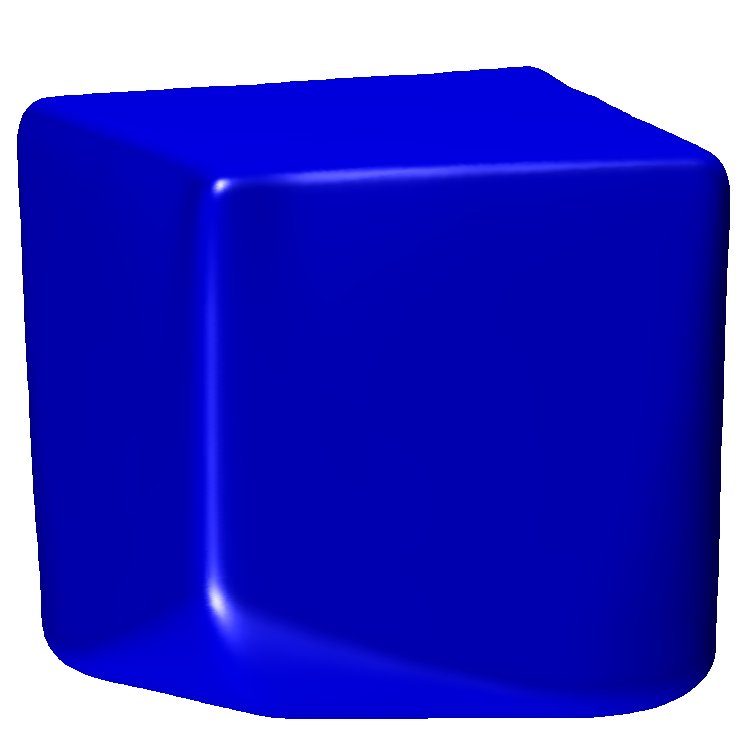} \\[-2em]
\begin{center}\begin{footnotesize} $200\times 200\times 200$ grid \end{footnotesize}\end{center}
\end{minipage}
\caption{Final state of deformation of a cube for various resolutions}
\label{fig:final_cube}
\end{figure}

\begin{table}[p]
\centering
\vspace{2em}
\begin{tabular}{|l|rrrr|}
\hline
resolution  & $n=50$  & $n=100$ & $n=150$ & $n=200$ \\
\hline
GA-CIR      &  -5.4\% &  -3.3\% &  -1.6\% &  -0.9\% \\
WENO        & -54.5\% & -15.8\% &  -9.6\% &  -6.5\% \\
WENO~reinit & -35.4\% & -11.2\% &  -7.1\% &  -5.0\% \\
\hline
\end{tabular}
\caption{Volume loss for the deformation of a cube}
\label{tab:volume_cube}
\vspace{3em} ~
\end{table}

%- - - - - - - - - - - - - - - - - - - - - - - - - - - - - - - - - - - - - - - - - - - - - - -
\subsubsection{Volume Loss}
%- - - - - - - - - - - - - - - - - - - - - - - - - - - - - - - - - - - - - - - - - - - - - - -
For the 3D deformation field defined in Sect.~\ref{subsubsec:3d_deformation}, we investigate
the loss of volume of two closed surfaces (a sphere and a cube), and its dependence on the
grid resolution.
We compare the gradient-augmented CIR method with the classical WENO scheme, once without,
and once with reinitialization. The four considered grid resolutions are
$50\times 50\times 50$, $100\times 100\times 100$, $150\times 150\times 150$,
and $200\times 200\times 200$. As the middle picture in Fig.~\ref{fig:3D_deformation_field}
shows, the surface becomes thinner than the grid resolution on a $50\times 50\times 50$
grid.

Fig.~\ref{fig:final_sphere} shows the final state ($t = T = 2.5$) of a sphere of radius
$0.15$, centered at $(0.35,0.35,0.35)$, under the evolution by the velocity field given
in Sect.~\ref{subsubsec:3d_deformation}. In all cases, the final shape has lost some
volume. The specific relative volume loss is shown in Table~\ref{tab:volume_sphere}.
One can observe that the gradient-augmented scheme performs significantly better than the
classical schemes if the grid resolution is on the order of the size of the smallest
structures. This indicates that gradient-augmented schemes can be expected to particularly
improve the resolution of small structures in realistic flow simulations, for which
features on various length scales have to be resolved. The results shown in
Fig.~\ref{fig:final_sphere} also clearly show the convergence of all three schemes to
a perfect sphere, as the grid resolution is increased.

Another interesting observation can be made about reinitialization. The comparison of
WENO without reinitialization and WENO with reinitialization, shown
in Fig.~\ref{fig:final_sphere}, indicates that reinitialization generally improves the
quality of the obtained results. Small structures are represented in a more robust fashion,
and consequently the loss of volume is reduced. In addition, the shape of the final surface
appears closer to a true sphere. In particular, reinitialization flattens out the numerical
notch that is present in the top two rows in Fig.~\ref{fig:final_sphere}.

However, reinitialization has a tendency to make surfaces round, and thus it performs
particularly well for surfaces that are similar to spheres. This can be clearly seen in
the results shown in Fig.~\ref{fig:final_cube}. For the same 3D deformation field, now
a cube of size $0.3\times 0.3\times 0.3$, centered at $(0.35,0.35,0.35)$, is evolved.
Observe that reinitialization smears out the numerical notch (which is good), but also
rounds the bottom face of the cube (which is bad). This effect is also visible in the
loss of volume, given in Table~\ref{tab:volume_cube}. Observe that for the considered
cube, a significant volume loss occurs for WENO, both with and without reinitialization.
In contrast, the volume loss with the gradient-augmented scheme is much smaller.
In addition, the results in Fig.~\ref{fig:final_cube} again show that
in relation to classical schemes, the gradient-augmented scheme performs particularly
well for low grid resolutions.
% reinitialization important for GA schemes

%=============================================================================================
\section{Conclusions and Outlook}
\label{sec:conclusions_outlook}
%=============================================================================================
The results presented in this paper show that common problems with level set methods
can be ameliorated when incorporating gradients as an independent quantity into the
computation. The presented gradient-augmented level set method is based on a Hermite
interpolation, which is used as a fundamental ingredient for the reconstruction of
the surface, the approximation of surface normals and curvature, and the advection
under a given velocity field. In this paper, we consider a $p$-cubic Hermite
interpolant, that in each grid cell is defined solely by data on the cell vertices.
We have shown that this $p$-cubic interpolant gives rise to a certain level of subgrid
resolution, to a second order accurate approximation of curvature, and to a globally
third order accurate advection scheme. Curvature is obtained at arbitrary positions
by analytically differentiating the $p$-cubic interpolant. The advection scheme is
based on a generalization of the CIR method. Characteristic curves are traced
backwards from grid points, and function values and derivatives of the level set
function are obtained from the $p$-cubic interpolant, respectively its derivatives.
Therefore, the resulting approximations and schemes are optimally local,
i.e.~information at a given (grid) point is updated by using data from only a single
cell. This promises a simpler treatment of adaptive mesh refinement and boundary
conditions.

In the theoretical investigation of gradient-augmented schemes, the concept of
superconsistency has been introduced, which admits the interpretation of a
gradient-augmented method as an evolution rule in a function space, in which
analytical coherence between function values and gradients is preserved.
In each time step a projection rule is applied, that is based on an interpolation.
In this paper, the specific case of a $p$-cubic Hermite interpolation is studied.
Employing the introduced concepts, the accuracy and stability of the generalized CIR
method have been investigated theoretically and verified in numerical experiments.
In addition, the performance of the gradient-augmented level set approach has been
compared to a classical high-order level set method, both in 2D and 3D benchmark
tests. While of similar computational cost, the gradient-augmented approach generally
yields more accurate results. In particular, small structures are preserved much
better than with the classical level set method. The ability to represent structures
of subgrid size turns out to be of great benefit.

While the presented approach looks promising and performs well in numerical tests,
various aspects remain to be investigated. The proof of stability presented in this
paper covers only a special case, and a general proof of stability is one key objective
of our current research. Another question of theoretical importance is the issue of
reinitialization. All numerical tests with the generalized level set method considered
in this paper have been done without reinitialization. While the knowledge of gradients
itself generally gives rise to a more accurate recovery of the surface, an additional
reinitialization may improve the quality of the method even further.

A fundamental question of interest in the technical realm is the combination of
the gradient-augmented level set approach with adaptive mesh refinement.
Here, the optimal locality of the advection scheme may prove advantageous, and
preserve automatically the high-order accuracy through various levels of grid
refinement. Also, the combination with Lagrangian particles, such as done
in \cite{EnrightFedkiwFerzigerMitchell2002} for the classical level set method, is of
interest. Another important aspect to be investigated is the case of the velocity
field, and its gradient, not being accessible everywhere. Most prominently, this
is the case in multi-phase fluid flow simulations, in which the evolution of the
level set function is coupled to an evolution for the fluid velocities.
In future research, we plan to investigate the incorporation of the presented
gradient-augmented level set method into a ghost fluid method \cite{FedkiwLiu2002}.
One of various challenges with this approach is the new possibility of up to three
intersections of the reconstructed surface with each cell edge.

The $p$-cubic interpolant considered in this paper yields an accurate method, but as
discussed in Sect.~\ref{subsec:small_structures}, it may be too smooth to capture
small structures when these are represented by a signed distance function. Hence, it is
an important question to investigate whether other types of interpolation can improve
the ability to capture subgrid structures. A related question is whether it is
beneficial to additionally augment the method by higher derivatives. For instance,
having direct access to the Hessian of the level set function may give rise to an
even better representation of small structures, and an even more accurate
approximation of curvature.
Another question is whether the gradient-augmented approach can be generalized to,
and yields good methods for, other types of evolution equations. Simple
generalization of the linear advection equation are the level set reinitialization
equation, or the G-equation in combustion modeling \cite{OberlackWenzelPeters2001}.
More complex examples are problems involving diffusion, up to the actual equations
of fluid flow. A consistently coupled gradient-augmented scheme for both the two-phase
Navier-Stokes equations and the level set equation for the interface could
not only allow the representation of subgrid structures, but also yield a
certain level of subgrid resolution in the actual fluid flow simulation.

%=============================================================================================
\section*{Acknowledgments}
%=============================================================================================
The authors would like to acknowledge the support by the
National Science Foundation. This research was supported by grant DMS--0813648.
Rodolfo Ruben Rosales would like to acknowledge partial support by the
Universidad Carlos III de Madrid, Spain.

%=============================================================================================
\bibliographystyle{plain}
\bibliography{references_complete}
%\bibliography{/home/r3/seibold/Science/BibTex/references_complete}
%=============================================================================================

\end{document}